\newtheorem{theorem}{Theorem}
\theoremstyle{plain}
\newtheorem{claim}[theorem]{Claim}
\newtheorem{conjecture}[theorem]{Conjecture}
\newtheorem{corollary}[theorem]{Corollary}
\newtheorem{definition}[theorem]{Definition}
\newtheorem{lemma}[theorem]{Lemma}
\numberwithin{equation}{section}
\numberwithin{theorem}{section}
\def\F{\mathcal{F}}
\def\h{\mathcal{H}}
\def \e{\epsilon}
\begin{document}
\title{Hamiltonian paths and cycles in some 4-uniform hypergraphs}
\author{Guanwu Liu\thanks{Partially supported by Anhui Initiative in Quantum Information Technologies grant (No. AHY150200).
E-mail address: liuguanwu@hotmail.com}\\
School of Mathematical Sciences\\ University of Science and Technology of China\\
 Hefei 230026, P.R. China\\
and \\
School of Mathematics\\  Georgia Institute of Technology\\ Atlanta, GA 30332-0160, USA\\
\bigskip \\
 Xiaonan Liu\thanks{Corresponding author. Email address: xliu729@gatech.edu; Partially supported by NSF DMS-1856645 and NSF DMS-1954134} \\
School of Mathematics\\
Georgia Institute of Technology\\
Atlanta, GA 30332-0160, USA}

\date{}

\maketitle

\begin{abstract}
In 1999, Katona and Kierstead conjectured that if a $k$-uniform hypergraph $\h$ on $n$ vertices
has minimum co-degree $\lfloor \frac{n-k+3}{2}\rfloor$, i.e., each set of $k-1$ vertices
is contained in at least $\lfloor \frac{n-k+3}{2}\rfloor$ edges, then it has a Hamiltonian
cycle. R\"{o}dl, Ruci\'{n}ski and Szemer\'{e}di in 2011
proved that the conjecture is true when $k=3$ and $n$ is large. We show that this Katona-Kierstead
conjecture holds if $k=4$, $n$ is large, and $V(\h)$ has a partition $A$, $B$ such that
$|A|=\lceil n/2\rceil$, $|\{e\in E(\h):|e \cap A|=2\}| <\e n^4$ for a fixed small constant $\e>0$.
\end{abstract}

\section{Introduction}
A classical result of Dirac \cite{Dirac1952} states
that any graph on $n$ vertices with minimum degree at least $n/2$ contains a Hamiltonian cycle, and
$K_{\lceil \frac{n}{2} \rceil-1, \lfloor \frac{n}{2} \rfloor +1}$ shows that this is best possible.
However, paths and cycles may be defined in several ways for hypergraphs \cite{Bermond1976,Gy2008,Haxell2006,
Katona1999, Kuhn2006}.

A hypergraph is called $k$-uniform if every edge of it contains $k$ vertices.
For $k$-uniform hypergraphs (or $k$-graphs, for short) with $k\geq 3$, we consider
{\it paths} which are $k$-graphs with vertices $v_1,v_2, \cdots, v_l$ and edges
$\{v_i, v_{i+1},\cdots, v_{i+k-1}\}$, $i=1,\cdots,l-k+1$.
A {\it cycle} is defined similarly with the additional edges
$\{v_i, v_{i+1},\cdots, v_{i+k-1}\}$ for $i=l-k+2,\cdots,l$, where for $h\geq l$ we set $v_h=v_{h-l}$.
A {\it Hamiltonian path (cycle)} in a $k$-graph $\h$ is a path (cycle) which is a sub-hypergraph of
$\h$ and contains all vertices of $\h$.

Given a $k$-graph $\h$ and $T\in \binom{V(\h)}{k-1}$, the {\it neighbourhood} of $T$ is denoted by
$N_{\h}(T):=\{x: T\cup \{x\}\in E(\h)\}$. The {\it collective degree} (or {\it co-degree}, for short)
of $T$ is  $d_{k-1}(T):=|N_\h(T)|$.
The {\it minimum co-degree} of $\h$ is $\delta_{k-1}(\h):=\min\{ d_{k-1}(T):T\in \binom{V(\h)}{k-1}\}$.

Katona and Kierstead \cite{Katona1999} proved that if $\h$ is an $n$-vertex $k$-graph with
$\delta_{k-1}(\h)\geq (1-\frac{1}{2k})n-k+4$,
then $\h$ contains a Hamiltonian cycle. In the same paper, they make the following conjecture.

\begin{conjecture}\label{conj1}(Katona and Kierstead \cite{Katona1999})
Let $\h$ be a $k$-graph on $n\geq k+1\geq 4$ vertices. If $\delta_{k-1}(\h)\geq
\lfloor\frac{n-k+3}{2}\rfloor$, then $\h$ has a Hamiltonian cycle.
\end{conjecture}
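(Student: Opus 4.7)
The plan is to follow the \emph{absorbing method} framework initiated by R\"{o}dl, Ruci\'{n}ski and Szemer\'{e}di, combined with a stability-type dichotomy. First I would fix a small constant $\e>0$ and partition the problem into two regimes: either $\h$ is $\e$-close to an extremal structure (informally, a $k$-graph whose vertex set admits a partition $A\cup B$ with $|A|=\lceil n/2\rceil$ and whose edges are dominated by those lying almost entirely inside $A$ or almost entirely inside $B$), or it is $\e$-far from every such structure. The near-extremal examples are blow-ups of $K_{\lceil n/2\rceil -1,\,\lfloor n/2\rfloor +1}$-type constructions; they realise the bound $\lfloor (n-k+3)/2\rfloor$ and must be defeated by a separate argument from the generic case.

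In the non-extremal regime, the proof would assemble three standard ingredients. An \emph{Absorbing Lemma} produces a short tight path $P_A\subset \h$ such that any leftover set $U\subset V(\h)\setminus V(P_A)$ with $|U|\le\beta n$ can be spliced into $P_A$, yielding a tight path on $V(P_A)\cup U$. A \emph{Reservoir Lemma} sets aside a small set $R$ of vertices with the property that essentially any two $(k-1)$-tuples at the ends of partial paths can be joined by a short tight path through $R$, and many such joins can be made simultaneously. A \emph{Path Cover Lemma} shows that after deleting $V(P_A)\cup R$, all but $o(n)$ vertices lie on a bounded collection of long tight paths. The co-degree assumption $\delta_{k-1}(\h)\ge \lfloor(n-k+3)/2\rfloor$ drives the first two: it guarantees that every $(k-1)$-tuple has enough neighbours to build the absorber gadgets, and it yields strong connectivity in the auxiliary graph on $(k-1)$-tuples used to route connections through $R$. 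One then concatenates the long paths through $R$, absorbs the remaining vertices via $P_A$, and closes the cycle.

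The main obstacle is the \emph{extremal case}. Here $\h$ is $\e$-close to a near-equi-bipartition $V(\h)=A\cup B$ and the co-degree bound is essentially tight, so one must keep careful track of the intersection types $(|e\cap A|,|e\cap B|)$ of the edges and build a Hamiltonian cycle whose $A/B$-pattern is compatible with the available edges of each type. For $k=3$ this extremal analysis was carried out by R\"odl--Ruci\'nski--Szemer\'edi in 2011, where the small number of intersection types keeps the case analysis manageable. For general $k\ge 4$ the number of patterns, and with it the variety of near-extremal configurations one must defeat (\emph{space barriers}, \emph{divisibility barriers}, and mixtures of the two), grows significantly; isolating and excluding each such barrier via the sharp co-degree bound is where the full conjecture resists a uniform attack. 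The present paper sidesteps part of this difficulty for $k=4$ by assuming that edges with $|e\cap A|=2$ are rare ($<\e n^4$), which eliminates one problematic family and makes the extremal analysis tractable within that slice; removing this hypothesis, and extending the method to all $k\ge 4$, constitutes the principal outstanding difficulty for the conjecture as stated.
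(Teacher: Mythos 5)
The statement you are addressing is Conjecture~\ref{conj1} itself: it is an open conjecture of Katona and Kierstead, and the paper does not prove it. The paper's actual contribution (Theorem~\ref{main}) is the case $k=4$ under the extra hypothesis $b(\h)<\e_0 n^4$, i.e.\ precisely the ``near-extremal'' slice in your dichotomy. So there is no proof in the paper to compare yours against, and your text is not a proof either --- it is a strategy outline in which every substantive step is deferred. That is a genuine gap, and in fact two distinct gaps.

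First, your claim that the non-extremal regime follows from ``three standard ingredients'' (absorbing, reservoir, path cover) is too optimistic at the exact threshold. Those ingredients, as developed by R\"odl--Ruci\'nski--Szemer\'edi, yield the asymptotic bound $(1/2+o(1))n$ for all $k$; obtaining the exact value $\lfloor(n-k+3)/2\rfloor$ in the non-extremal case is exactly what the paper leaves open as Conjecture~5.2, and the authors explicitly note it ``is likely'' to require the full absorption machinery of the $k=3$ proof, which has not been carried out for any $k\ge 4$. You cannot cite the co-degree hypothesis as ``driving'' the lemmas without verifying that the absorber gadgets and connecting structures survive when the co-degree is only marginally above $n/2$; this verification is the hard part and is absent. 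Second, your description of the extremal regime inverts the paper's logic: the paper does not ``sidestep'' the extremal case --- it \emph{solves} it (for $k=4$), by a detailed classification of typical/medium/anarchist vertices, bridges, switchers, and a parity (``difference mod $8$'') analysis, none of which appears in your sketch. What remains open after this paper is the non-extremal case for $k=4$ and everything for $k\ge 5$. Your proposal correctly identifies the overall architecture a full proof would need, and you are candid that the barriers analysis for general $k$ is unresolved, but as a proof of the conjecture it establishes nothing beyond what was already known.
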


The bound on $\delta_{k-1}(\h)$ is best possible due to a construction of a non-Hamiltonian $k$-graph on $n$
vertices with $\delta_{k-1}(\h)=\lfloor\frac{n-k+3}{2}\rfloor-1$. We describe the constuction
for $k=4$. Let $\h_0:=\h_0(A,B)$ be a 4-graph with vertex set $V=A\cup B$ with $A\cap B=\emptyset$,
$|A|=\lceil n/2\rceil$  and $|B|=\lfloor n/2 \rfloor$. Its edge set consists  of all
$\binom{|A|}{3}|B|+|A|\binom{|B|}{3}$ quadruples of vertices having an odd intersection with A.
It is easy to see that if $|A|, |B|\geq 2$ then $\delta_3(\h_0)=\lfloor n/2 \rfloor-2=
\lceil\frac{n-1}{2}\rceil-2$ and $\h_0$ does not have a Hamiltonian path.
In \cite{Rodl2011}, R\"{o}dl, Ruci\'{n}ski and Szemer\'{e}di
prove that Conjecture \ref{conj1} is true when $k=3$ and $n$ is large.

\begin{theorem}(R\"{o}dl, Ruci\'{n}ski and Szemer\'{e}di \cite{Rodl2011})
Let $\h$ be a 3-graph on $n$ vertices, where $n$ is sufficiently large. If $\delta_2(\h)\geq \lfloor n/2\rfloor$,
then $\h$ has a Hamiltonian cycle. Moreover, for every $n$ there exists an $n$-vertex 3-graph $\h_n$ such
that $\delta_2(\h_n)= \lfloor n/2\rfloor-1$ and $\h_n$ does not have a Hamiltonian cycle.
\end{theorem}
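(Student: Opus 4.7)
The plan is to follow the absorbing-method framework combined with a stability/extremal dichotomy. Fix a small constant $\alpha>0$ to be chosen later. Call $\h$ $\alpha$-\emph{extremal} if there is a partition $V(\h)=A\cup B$ with $\big||A|-n/2\big|\le 1$ such that at most $\alpha n^3$ edges of $\h$ have $|e\cap A|$ of one specified parity; this mirrors the structure of the extremal example $\h_n$ described below. The main claim is proved in two cases depending on whether or not $\h$ is $\alpha$-extremal, and the sharpness statement is handled by a direct construction.

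In the non-extremal case I would assemble the Hamiltonian cycle from three ingredients. First, an \emph{absorbing lemma}: produce a tight path $P_{\mathrm{abs}}$ in $\h$ with $|V(P_{\mathrm{abs}})|=o(n)$ such that for every sufficiently small set $U\subseteq V(\h)\setminus V(P_{\mathrm{abs}})$ there is a tight path on $V(P_{\mathrm{abs}})\cup U$ with the same endpoints as $P_{\mathrm{abs}}$. The local fact driving this, proved by counting using $\delta_2(\h)\ge\lfloor n/2\rfloor$, is that every vertex $v$ has a positive-density family of short "absorbers" that can be rewired to swallow $v$, and a standard probabilistic alteration gives a single short path performing all absorptions simultaneously. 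Second, a \emph{reservoir and connecting lemma}: set aside a random subset $R\subseteq V(\h)$ of size $o(n)$ such that any two ordered pairs $(a,b),(c,d)$ outside $R$ can be joined by a short tight path whose internal vertices lie in $R$; the codegree hypothesis makes this a greedy construction. Third, an \emph{almost-spanning path}: apply the weak regularity lemma for 3-graphs to $V(\h)\setminus (V(P_{\mathrm{abs}})\cup R)$ and construct a tight path covering all but $o(n)$ vertices, using that non-extremality guarantees the reduced cluster structure admits a Hamilton-path-type traversal. Connect the almost-spanning path through $R$ to the two endpoints of $P_{\mathrm{abs}}$ and then absorb the leftover vertices into $P_{\mathrm{abs}}$ to close the Hamiltonian cycle.

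In the extremal case one has a partition $V(\h)=A\cup B$ with $|A|\approx n/2$ so that almost all edges $e$ satisfy $|e\cap A|$ of one fixed parity. A cleaning step moves the $o(n)$ exceptional vertices so that every typical pair has essentially all its neighbours on the parity-predicted side, and it rebalances $|A|$ and $|B|$ if necessary. One then builds the cycle by an explicit alternating procedure that walks through $A$ and $B$ in the parity-respecting pattern forced by the structure, extending one vertex at a time using the robust one-sided codegree; a small endgame then matches the parities of the last $O(1)$ vertices on each side, and the inequality $\delta_2(\h)\ge\lfloor n/2\rfloor$ is used sharply to guarantee this endgame succeeds. For the sharpness statement, for each $n$ choose $|A|,|B|$ balanced up to one so that $|A|-2$ and $|B|-1$ are both at least $\lfloor n/2\rfloor-1$, and let $\h_n$ consist of all triples $e$ with $|e\cap A|$ of a fixed parity, the parity chosen so that the congruence $3|A|\equiv n\pmod 2$ (which any Hamiltonian cycle in $\h_n$ must satisfy, obtained by summing $|e_i\cap A|$ over the cyclic edges) fails; a direct case analysis on pairs $T$ with $|T\cap A|\in\{0,1,2\}$ then gives $\delta_2(\h_n)=\lfloor n/2\rfloor-1$.

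The main obstacle I expect is step (iii) of the non-extremal case: establishing that the "tight-path reachability" relation on the reduced clusters is sufficiently robustly connected away from the extremal structure to admit a single almost-spanning tight path. This requires a careful stability analysis of the reduced multigraph together with a tight-path connecting lemma inside regular triples, and it is where the non-extremal hypothesis is used in an essential way; the absorbing and reservoir lemmas, the extremal-case construction, and the parity calculation for sharpness are, by comparison, routine once the right framework is fixed.
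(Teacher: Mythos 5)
This statement is not proved in the paper at all: it is Theorem~1.2 of the manuscript only in the sense of being quoted from R\"odl, Ruci\'nski and Szemer\'edi \cite{Rodl2011}, so there is no in-paper proof to compare against. Judged on its own, your proposal is a faithful reconstruction of the architecture that \cite{Rodl2011} actually uses --- an extremal/non-extremal dichotomy, with absorption, a reservoir/connecting lemma and a long-path lemma in the non-extremal case, a direct parity-respecting construction in the extremal case, and the parity count $\sum_i|e_i\cap A|=3|A|$ for the lower-bound example. In that sense the strategy is the right one, and your extremal-case sketch is close to what the present paper itself carries out for $4$-graphs (bridges, switchers, and the $P_{top}/P_{zig}$ matchings are exactly the ``alternating procedure plus endgame'' you describe).

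The genuine gap is in where you locate the difficulty. You assert that the codegree bound $\delta_2(\h)\ge\lfloor n/2\rfloor$ alone yields, by counting, a positive-density family of absorbers for every vertex, and that the connecting lemma is ``a greedy construction.'' At the exact threshold neither claim is true: for two pairs $ab$, $cd$ one only gets $|N(ab)|+|N(cd)|\ge n$, so $N(ab)\cap N(cd)$ can be empty, and in near-extremal configurations a vertex can have $o(n^4)$ absorbers of the standard type. This is precisely why \cite{Rodl2011} must thread the non-extremality hypothesis through the absorbing and connecting lemmas themselves (either many absorbers exist for every vertex, or the hypergraph is $\alpha$-extremal), not only through the almost-spanning-path step that you single out as the main obstacle. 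Relatedly, calling the absorbing lemma, the extremal endgame, and the sharpness computation ``routine'' understates them: the extremal case occupies a large part of \cite{Rodl2011} (the present paper even remarks that its bridge construction would shorten Section~8 there), and the exact choice of $|A|$, $|B|$ and the parity in $\h_n$ must be made jointly for each residue of $n$ to achieve $\delta_2(\h_n)=\lfloor n/2\rfloor-1$ exactly; your stated inequalities for $|A|-2$ and $|B|-1$ do not match the case analysis one actually gets (the three pair types give $|B|$, $|B|-1$ and $|A|$ for the odd-parity construction). So: right roadmap, but the two lemmas you treat as black boxes are exactly where the exact-threshold difficulty lives, and one of them is justified by an argument that fails as stated.
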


For a 4-graph $\h$ on $n$ vertices, let $A$, $B$ be a partition of $V(\h)$ and $\h(A,A,B,B):
= \{ e\in E(\h):|e\cap A|=2 \}$, and let $b(\h):=\min|\h(A,A,B,B)|$,
where the minimum is taken over all partitions $V(\h)=A\cup B$ with $|A|=\lceil n/2\rceil$
and $|B|=\lfloor n/2 \rfloor$. We know that if $b(\h)$ is very small,
then $\h$ is very ``close'' to the $\h_0$, see Claim \ref{edges} below. We show
that Conjecture~\ref{conj1} holds for these $\h$ with small $b(\h)$.

\begin{theorem}\label{main}
There exists $\e_0>0$ such that, for sufficiently large $n$ and any 4-graph $\h$ on $n$ vertices with
$b(\h)<\e_0 n^4$, the following hold:
\begin{enumerate}
\item[\textup{(i)}]If $\delta_3(\h)\geq \lceil\frac{n-1}{2}\rceil-1$, then $\h$ has a Hamiltonian path;
\item[\textup{(ii)}]If $\delta_3(\h)\geq \lfloor\frac{n-1}{2}\rfloor$, then $\h$ has a Hamiltonian cycle.
\end{enumerate}
\end{theorem}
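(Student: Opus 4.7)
The plan is to adopt a stability-plus-absorbing framework: since $b(\h)<\e_0 n^4$, the hypergraph $\h$ is structurally close to the extremal example $\h_0(A,B)$, and I would build a Hamiltonian path/cycle by first analyzing how cycles must look in a ``near $\h_0$'' structure, then combining a long structured path with a small absorbing gadget that handles the few atypical vertices.

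\textbf{Setup and stability.} First, I fix a partition $V(\h)=A\cup B$ with $|A|=\lceil n/2\rceil$ that achieves $|\h(A,A,B,B)|=b(\h)<\e_0 n^4$. By the stability statement given in Claim~\ref{edges}, almost every edge of $\h$ has odd intersection with $A$, and in particular the numbers of edges $e$ with $|e\cap A|\in\{0,4\}$ are also controlled (via the co-degree bound applied to triples that sit entirely in $A$ or entirely in $B$, together with the minimality of $b(\h)$ under swaps). I then call a vertex $v$ \emph{typical} if its link hypergraph agrees with the link of the corresponding vertex in $\h_0$ on all but $o(n^3)$ triples, and use a simple averaging argument on $b(\h)$ to show that the set $W$ of atypical vertices has size $|W|=O(\sqrt{\e_0}\,n)$.

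\textbf{Local absorbers.} For each vertex $w\in V(\h)$, I want many short \emph{absorbing gadgets}: short paths on a constant number of vertices whose interior is strictly inside $V\setminus\{w\}$, but which also admit a Hamiltonian path, with the same endpoints, on the same vertex set together with $w$. In $\h_0$ one can toggle insertion/deletion of one $A$-vertex or one $B$-vertex inside a locally periodic segment using exactly one ``switcher'' edge from $\h(A,A,B,B)$; the co-degree hypothesis, which is a full unit above $\delta_3(\h_0)$, is precisely strong enough to produce $\Omega(n^c)$ such gadgets for every $w$. A standard probabilistic selection (as in the R\"odl--Ruci\'nski--Szemer\'edi absorption method) then gives a single \emph{absorbing path} $P_{\text{abs}}$ of length $O(\e_0^{-1})$ that can absorb any subset of $W$ while preserving the required balance of $A$- and $B$-vertices along the path.

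\textbf{Long path and closing.} After setting $P_{\text{abs}}$ aside, I construct a long path $P$ covering $V(\h)\setminus(V(P_{\text{abs}})\cup W)$ by greedily following a near-$\h_0$ pattern (essentially $A^3B$ in segments whose union gives $|A|\approx n/2$), using the co-degree bound to extend at each step; whenever the $A/B$-balance drifts, I splice in a single edge from $\h(A,A,B,B)$ to reset the pattern. I then glue $P$ to $P_{\text{abs}}$ using short connectors provided again by the co-degree condition, and activate the absorber to incorporate $W$. For part~(i) this yields a Hamiltonian path directly. For part~(ii) the slightly stronger co-degree bound $\delta_3(\h)\ge \lfloor (n-1)/2\rfloor$ gives just enough flexibility at the final step to find a closing edge of the correct parity type, turning the path into a Hamiltonian cycle.

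\textbf{Main obstacle.} The key difficulty is the parity obstruction already visible in $\h_0$: if every window of 4 consecutive vertices along the would-be cycle has odd intersection with $A$, then a telescoping argument forces $a_i\equiv a_{i+4}\pmod 2$ for the indicator sequence, and the resulting $4$-periodic pattern gives $|A|\in\{n/4,3n/4\}$, contradicting $|A|=\lceil n/2\rceil$. So any Hamiltonian cycle \emph{must} use several edges from the small set $\h(A,A,B,B)$. The delicate part is to guarantee both (a)~sufficiently many switcher edges lie at usable locations along the path, and (b)~their combined effect balances the $A$- and $B$-vertex counts exactly, while (c)~simultaneously accommodating the absorption of every vertex of $W$. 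This triple balancing, carried out on top of the structural stability, is what drives the proof.
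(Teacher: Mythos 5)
Your stability and parity observations are correct and mirror the paper's setup, but the middle of your argument---an RRS-style absorbing path---is a genuine gap, not merely a different route. First, the supply arithmetic is off: you propose a $P_{\text{abs}}$ of length $O(\e_0^{-1})$ to absorb a set $W$ of size $\Theta(\e_0^{1/2}n)$ (or, in the paper's bookkeeping, $O((\e_0/\e_1)n)$ from Claim~\ref{claim1}); an absorbing path can absorb at most a constant fraction of its own length, so its length must grow linearly in $n$, not stay bounded. More seriously, the absorption lemma needs $\Omega(n^{c})$ usable absorbers per vertex, and in a graph that is $\e_0$-close to $\h_0$ the switcher edges in $\h(A,A,B,B)$ that your toggling gadgets depend on number only $O(\e_0 n^4)$ with no guarantee of useful concentration; you give no count. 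Notably, R\"odl--Ruci\'nski--Szemer\'edi themselves deploy absorption only in the non-extremal regime and switch to a structured argument in the extremal one, which is exactly the regime of this theorem.

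Second, you are missing the paper's central structural device, the \emph{bridge}: a short path with one end triple in $AAA$ and one in $BBB$ (Lemma~\ref{le1}). This is what lets the $A$-heavy and $B$-heavy halves of the Hamiltonian path be spliced together; without it your greedy long-path construction has no mechanism to turn the corner from the $AAAB$-pattern to the $ABBB$-pattern. The paper, rather than extending greedily (risky at a co-degree bound this close to the threshold), builds two auxiliary bipartite graphs from the prospective vertex sequence and applies Dirac's theorem to obtain perfect matchings, thereby covering the typical vertices deterministically (Lemmas~\ref{seq} and~\ref{le3}). Finally, for the cycle case, the ``delicate triple balancing'' you flag but do not resolve is in fact where the cycle proof lives: the paper resolves it with a concrete mod-$8$ bookkeeping of path differences via seeds, switchers, and a ``good set'' containing two bridges whose summed difference can realize any residue class (Claim~\ref{switcher} and Lemma~\ref{le5}). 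These elements are absent from your sketch and cannot be recovered from the parity observation alone.
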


The bound in (i) is tight because of $\h_0$. The bound in (ii)
is tight because of $\h'_0$, where $\h'_0$ is obtained from $\h_0$ by adding a new vertex $v$ and
joining it to all $\binom{n}{3}$ triples of vertices. We can see that (i) is a corollary of (ii).
Indeed, for $n$ even the thresholds in (i) and (ii) coincide. For $n$ odd, however, they differ by 1. Suppose $\h$ is a $4$-graph satisfying the conditions in  (i). In order to see the implication in this case, consider a 4-graph $\h'$ obtained from $\h$ by adding a new vertex $v$ and join it to all $\binom{n}{3}$ triples of vertices. Then
$$\delta_3(\h')\geq \delta_3(\h)+1\geq (\lceil\frac{n-1}{2}\rceil-1)+1\geq \lfloor\frac{n}{2}\rfloor= \lfloor\frac{(n+1)-1}{2}\rfloor$$
and by (ii) $\h'$ has a Hamiltonian cycle. After removing $v$, $\h$ has a Hamiltonian path.
We do not determine the optimal value of the
constant $\e_0$ in the theorem. We only checked that $\e_0=10^{-20}$ is sufficient.

For convenience, we will consider only the case when $\h$ has an even number of vertices. The odd case can be treated by some easy modifications and it is discussed in Section 5.

The rest of the paper is organized as follows. In Section 2, we study the typicality of vertices and edges of $\h$ as in \cite{Rodl2011}. The proofs of (i) and (ii) in Theorem \ref{main} will be given in Sections 3 and 4, respectively. Although (i) is a corollary of (ii), the proof of (i) given here better illustrates the proof approach of both results without involving too much technicality. Hence we also provide the proof of (i) here. In the final section, we offer some concluding remarks.

\section{The typicality of vertices and edges of $\h$}
Throughout this section, unless there are special instructions, $\h_0$ denotes the 4-graph with
$V(\h_0)=A\cup B$, where $A\cap B=\emptyset$ and $|A|=|B|$, and $E(\h_0)$ consisting of all quadruples
of $V(\h_0)$ each of
which intersects $A$ in precisely one or three vertices. For a 4-graph $\h$ with $V(\h)=V(\h_0)$,
we use notation $\h(A,B)$ and $\h_0(A,B)$ to indicate the partition.
We will refer to the edges with exactly three vertices in $A$ as the $AAAB$ edges, the edges with
exactly one vertex in $A$ as the $ABBB$ edges, etc. The $AAAB$ edges and the
$ABBB$ edges will be referred to as the {\it typical} $edges$ of $\h$, and the $AABB$ edges will be
called ${\it atypical}$. (The $AAAA$ edges and $BBBB$ edges remain ${\it neutral}$.)

First we show the following claim which says that if $b(\h)$ is small and $\delta_3(\h)$ is large,
then $\h$ almost contains a copy of $\h_0$.

\begin{claim}\label{edges}
Suppose $\h$ is a 4-graph with $V(\h)=A\cup B$, such that $A\cap B=\emptyset$ and $|A|=|B|=n$.
For any $c,c_1>0$, if $|\h(A,A,B,B)|<cn^4$ and $\delta_3(\h)\geq (1-c_1)n$, then
\begin{equation*} %\label{0}
| E(\h_0(A,B))\backslash  E(\h)|\leq \frac{1}{3}(c_1+4c)n^4+O(n^3).
\end{equation*}
\end{claim}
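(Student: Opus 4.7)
The proof should go by bounding the two classes of typical non-edges, $AAAB$ and $ABBB$, separately and summing; by the $A$/$B$ symmetry of the hypothesis the two bounds are structurally identical, so the work amounts to a single double count. Write $e_{AAAB},e_{AABB},e_{ABBB}$ for the numbers of edges of $\h$ of the respective types, and let $X_{AAAB}, X_{ABBB}$ count the missing typical quadruples of each type, so that the claim is exactly a bound on $X_{AAAB}+X_{ABBB}$.

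For $X_{AAAB}$ I would concentrate on triples $T$ with $|T\cap A|=2$. The key identity is
\begin{equation*}
\sum_{T:\,|T\cap A|=2} d_3(T) \;=\; 3\,e_{AAAB} + 2\,e_{AABB},
\end{equation*}
where the multiplicities $3$ and $2$ come from counting, inside each $AAAB$ edge $\{a_1,a_2,a_3,b\}$ and each $AABB$ edge $\{a_1,a_2,b_1,b_2\}$, the sub-triples whose intersection with $A$ has size exactly $2$. The hypothesis $\delta_3(\h)\geq(1-c_1)n$ together with the fact that there are $\binom{n}{2}n$ such triples gives
\begin{equation*}
3\,e_{AAAB} + 2\,e_{AABB} \;\geq\; (1-c_1)\,n\,\binom{n}{2}\,n,
\end{equation*}
and inserting $e_{AABB}=|\h(A,A,B,B)|<cn^4$ yields a lower bound on $e_{AAAB}$. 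Subtracting this from the total number $\binom{n}{3}n$ of $AAAB$ quadruples, the lower-order terms collapse to
\begin{equation*}
X_{AAAB} \;\leq\; \frac{c_1}{6}n^4 + \frac{2c}{3}n^4 + O(n^3).
\end{equation*}

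Running the same argument on triples with $|T\cap A|=1$ controls $X_{ABBB}$ by the identical expression, and summing produces the stated estimate $\frac{1}{3}(c_1+4c)n^4+O(n^3)$. There is no serious obstacle: the argument is a one-shot double count. The only point requiring genuine attention is the combinatorial bookkeeping of the coefficients $3$ and $2$ in the identity above; once that is settled, everything reduces to routine manipulation such as $\binom{n}{3}n-\frac{1}{3}\binom{n}{2}n\cdot(1-c_1)n=\frac{c_1}{6}n^4+O(n^3)$, and no further structural information about $\h$ is used.
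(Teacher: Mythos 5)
Your proposal is correct and follows essentially the same route as the paper's proof: double-counting $\sum_{T\in AAB} d_3(T)$ and $\sum_{T\in ABB} d_3(T)$ via the identities $3e_{AAAB}+2e_{AABB}$ and $3e_{ABBB}+2e_{AABB}$, invoking the co-degree bound, and subtracting from $2n\binom{n}{3}$. The only cosmetic difference is that the paper adds the two inequalities before subtracting from $|E(\h_0)|$, whereas you bound $X_{AAAB}$ and $X_{ABBB}$ separately and then sum — an equivalent bookkeeping choice.
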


\begin{proof}
For convenience,
% let $|ABBB|$, $|AABB|$ and $|AAAB|$ denote the number of edge set with exactly one,
%two and threevertices in $A$ of $\h$,
let $ABB$ and $AAB$ denote the sets of $3$-vertex subset of $V(\h)$ %three vertex subset of $V(\h)$
with exactly one and two vertices
from $A$ respectively. Then
\begin{equation*}
\sum_{S\in ABB}d_3(S)=2|AABB|+3|ABBB|\geq (1-c_1)n\cdot n\cdot \binom{n}{2}
\end{equation*}
and
\begin{equation*}
\sum_{S\in AAB}d_3(S)=2|AABB|+3|AAAB|\geq (1-c_1)n\cdot n\cdot \binom{n}{2}.
\end{equation*}
Summing the above two equations, we have
\begin{equation*}
3|ABBB|+3|AAAB|\geq 2(1-c_1)n\cdot n\cdot \binom{n}{2}-4|AABB|.
\end{equation*}
Since the number of edges of $\h_0(A,B)$ is $n\cdot\binom{n}{3}+\binom{n}{3}\cdot n$ and $|AABB|< cn^4$,
we have
\begin{equation*}
|E(\h_0(A,B))\setminus E(\h)|\leq \frac{1}{3}(c_1+4c)n^4+O(n^3).
\end{equation*}
\end{proof}

From time to time, we also need to deal with hypergraphs whose vertex partitions are not balanced. Therefore, in the remainder of this section we always assume that $\h$ is a 4-graph on $2n$ vertices and $A, B$ is a partition of $V(\h)$
such that
\begin{equation}\label{eq1}
\delta_3(\h)\geq n-1,
\end{equation}

\begin{equation}\label{eq2}
n-5\e_0 n\leq |A|\leq n+5\e_0 n,
\end{equation}
and
\begin{equation}\label{eq3}
|\h(A,A,B,B)|\leq \e_0 n^4,
\end{equation}
where $\e_0>0$ is sufficiently small and $n$ is sufficiently large.

\subsection{Classification of vertices}
We follow the notation and the set up in \cite{Rodl2011}.
The {\it link} of a vertex $v\in V(\h)$ is defined as the set of triples $L_v:=\{uwt:uwtv\in E(\h)\}$;
let $L_v^{V_1V_2V_3}:=L_v\cap V_1V_2V_3$ and $l_v^{V_1V_2V_3}:=|L_v^{V_1V_2V_3}|$,
where $V_1V_2V_3\in\{AAA,AAB,ABB,BBB\}$. Similarly,
the link of a pair $u,v\in V(\h)$ is defined as the set of pairs $L_{uv}:=\{wt:uvwt\in E(\h)\}$;
let $L_{uv}^{V_1 V_2}:=L_{uv}\cap V_1 V_2$ and $l_{uv}^{V_1V_2}:=|L_{uv}^{V_1V_2}|$,
where $V_1V_2\in \{AA,AB,BB\}$.

%Let $a$ be a vertex in $A$ and $b$ be a vertex in $B$.
 In the remainder of this section, vertices $a$ and  $a_i$ (respectively, $b$ and $b_i$) are contained in $A$
(respectively, $B$).
 From \eqref{eq1}, we see that
 \begin{equation}\label{eq4}
2l_{a}^{AAB}+2l_{a}^{ABB}\geq |B|(|A|-1)(n-1) \ \ and \ \ 6l_{a}^{BBB}+2l_{a}^{ABB}\geq |B|(|B|-1)(n-1);
\end{equation}
and
\begin{equation}\label{eq5}
2l_{b}^{AAB}+2l_{b}^{ABB}\geq |A|(|B|-1)(n-1) \ \ and \ \ 6l_{b}^{AAA}+2l_{b}^{AAB}\geq |A|(|A|-1)(n-1).
\end{equation}

The vertices of $\h$ are classified according to the values of $l_{v}^{ABB}$ and $l_{v}^{AAB}$ as follows:

\begin{definition}
For $\e>0$ and vertex $a\in A$, $a$ is called
\begin{itemize}
\item $\e$-typical if $l_{a}^{ABB}\leq \e |A| \binom{|B|}{2}$;
\item $\e$-medium if $l_{a}^{ABB}>\e |A| \binom{|B|}{2}$ and $l_{a}^{AAB}>\e \binom{|A|}{2} |B|$;
\item an $\e$-anarchist if $l_{a}^{AAB}\leq \e \binom{|A|}{2} |B|$.
\end{itemize}

\medskip

Similarly, for vertex $b\in B$, $b$ is called
\begin{itemize}
\item $\e$-typical if $l_{b}^{AAB}\leq \e \binom{|A|}{2} |B|$;
\item $\e$-medium if $l_{b}^{AAB}>\e \binom{|A|}{2} |B|$ and $l_{b}^{ABB}>\e |A| \binom{|B|}{2}$;
\item an $\e$-anarchist if $l_{b}^{ABB}\leq \e |A| \binom{|B|}{2}$.
\end{itemize}
\end{definition}

We have the following observations:

{\bf Observation} (i) For clarity, results and proofs below are presented in the balanced case,
when $|A|=|B|=n$, but they remain valid, except for Claim~\ref{fact1},
in non-balanced case with just slightly worse constants.

{\bf Observation} (ii) By \eqref{eq4} and \eqref{eq5},  if $a\in A$ is $\e$-typical then
\begin{equation}\label{onevert1}
l_{a}^{AAB}\geq \frac{1}{2}n(n-1)^2-\frac{1}{2}\e n^3 \ \ and \ \ l_{a}^{BBB}\geq \frac{1}{6}n(n-1)^2-\frac{1}{6}\e n^3;
\end{equation}
and if $b\in B$ is $\e$-typical then
\begin{equation}\label{onevert2}
l_{b}^{ABB}\geq \frac{1}{2}n(n-1)^2-\frac{1}{2}\e n^3 \ \ and \ \ l_{b}^{AAA}\geq \frac{1}{6}n(n-1)^2-\frac{1}{6}\e n^3.
\end{equation}
Hence  each vertex of $\h$ only belongs to one of the above three types when $n$ is sufficiently large.

{\bf Observation} (iii) Assume \eqref{eq2} holds. For sufficiently large $n$, if $a\in A$ is an
$\e$-anarchist, let $A'=A\setminus \{a\}$ and $B'=B\cup \{a\}$. If \eqref{eq2} still holds for $A'$, $B'$,
then $l_{a}^{A'A'B'}=l_{a}^{AAB}\leq \e|B| \binom{|A|}{2} \leq \e' |B'|\binom{|A'|}{2}$ for some
$\e'>\e$.  For any other vertex $v\neq a$, $l_{v} ^{V_1V_2V_3}$ is changed by no more than
$\max \{\binom{|A|}{2}, \binom{|B|}{2}, |A||B|\}=O(n^2)$.
If $v$ is $\e$-typical with respect to $A,B$, say $v\in A$, then $l_v^{ABB}\leq \e |A|\binom{|B|}{2}$
and $l_v^{A'B'B'}\leq l_v^{ABB}+O(n^2)< \e' |A'|\binom{|B'|}{2}$.
So, transferring an $\e$-anarchist $a$ in $A$ to $B$ makes $a$ $\e'$-typical with respect to $A',B'$,
and other $\e$-typical vertices with respect to $A, B$  are $\e'$-typical with respect to $A', B'$.

\medskip

%The following claim tells us that all ``anarchists" with respect to $A,B$, where $b(\h)=|\h(A,A,B,B)|$,
%are contained in only one class of $A,B$. So if the number of  such ``anarchists "in $\h$ is small,
%we can apply Observation (iii) to get more ``typical" vertices.
By Observation (iii), we know that an anarchist acts like a typical vertex on the other side.
We claim that in the case of a balanced partition $(A,B)$ such that $|\h(A,A,B,B)|=b(\h)$,
coexistence of an anarchist with an atypical vertex on the other side is impossible.

\begin{claim}\label{fact1}
Suppose $|A|=|B|=n$ and $b(\h)=|\h(A,A,B,B)|$. For every $\e>0$ and sufficiently large $n$, if there is an $\e$-anarchist in $B$
then every vertex in $A$ is $3\e$-typical. Also, if there is an
$\e$-anarchist in $A$ then every vertex in $B$ is $3\e$-typical.
\end{claim}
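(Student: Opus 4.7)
The plan is to use a swap-minimality argument: since the partition $(A,B)$ achieves the minimum value $b(\h)$ of $|\h(A,A,B,B)|$ over all partitions of $V(\h)$ into two sets of size $n$, swapping any $a^* \in A$ with any $b \in B$ produces another balanced partition $(A',B')$ and so must satisfy $|\h(A',A',B',B')| \ge |\h(A,A,B,B)|$. I will show that if $b$ is an $\e$-anarchist in $B$ but some $a^* \in A$ fails to be $3\e$-typical, then this swap in fact \emph{decreases} the $AABB$-count, a contradiction.

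Concretely, set $A' = (A \setminus \{a^*\}) \cup \{b\}$ and $B' = (B \setminus \{b\}) \cup \{a^*\}$. For any edge $e$ of $\h$, $|e \cap A'| = |e \cap A| + \mathbf{1}[b \in e] - \mathbf{1}[a^* \in e]$, so an edge changes its membership in the $AABB$-class only if it contains exactly one of $a^*,b$. A direct case analysis on the original type of $e$ (one of AAAA, AAAB, AABB, ABBB, BBBB) then yields
\begin{equation*}
|\h(A',A',B',B')| - |\h(A,A,B,B)| = \bigl(l_{a^*}^{AAB} + l_b^{ABB}\bigr) - \bigl(l_{a^*}^{ABB} + l_b^{AAB}\bigr) + R,
\end{equation*}
where the correction $R = 2 l_{a^*b}^{AB} - l_{a^*b}^{AA} - l_{a^*b}^{BB}$ accounts for edges containing both $a^*$ and $b$ and satisfies $|R| = O(n^2)$. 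Combined with the minimality inequality this gives
\begin{equation*}
l_{a^*}^{ABB} + l_b^{AAB} \le l_{a^*}^{AAB} + l_b^{ABB} + O(n^2).
\end{equation*}

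To close the argument I would plug in three bounds: (i) the anarchist hypothesis $l_b^{ABB} \le \e n \binom{n}{2}$; (ii) the degree inequality~\eqref{eq5} applied to $b$, which gives $l_b^{AAB} \ge \tfrac{1}{2} n(n-1)^2 - l_b^{ABB}$; and (iii) the trivial cap $l_{a^*}^{AAB} \le \binom{n-1}{2} n$. The leading $\tfrac12 n^3$ terms coming from (ii) and (iii) cancel, and the inequality collapses to $l_{a^*}^{ABB} \le 2 \e n \binom{n}{2} + O(n^2)$, which for $n$ sufficiently large is strictly less than $3 \e n \binom{n}{2}$, contradicting the assumed failure of $3\e$-typicality at $a^*$. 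The symmetric statement (an anarchist in $A$ forces every vertex in $B$ to be $3\e$-typical) follows by interchanging the roles of $A$ and $B$. The only real obstacle is the edge-type bookkeeping in the swap identity: once the combinations of (original edge type, membership pattern of $(a^*,b)$ in $e$) are tabulated, each contribution to the change of $|\h(A,A,B,B)|$ is read off directly, and the four relevant link quantities $l_{a^*}^{AAB}, l_{a^*}^{ABB}, l_b^{AAB}, l_b^{ABB}$ emerge with the signs shown above.
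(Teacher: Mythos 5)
Your proposal is correct and follows essentially the same route as the paper. The paper packages the same computation by defining $I_v := l_v^{AAB} - l_v^{ABB}$, observing that the change under swapping $a^*\leftrightarrow b$ is $I_{a^*}-I_b+O(n^2)$, and then bounding $I_b$ from below (via the anarchist hypothesis and the co-degree inequality applied to $b$) and $I_{a^*}$ from above (via the trivial cap on $l_{a^*}^{AAB}$ and the failure of $3\e$-typicality). Your step-by-step swap identity with the explicit correction $R = 2l_{a^*b}^{AB} - l_{a^*b}^{AA} - l_{a^*b}^{BB}$ is a more careful derivation of the paper's informally stated $O(n^2)$ term, but the core minimality argument, the three bounds plugged in, and the cancellation of the $\tfrac12 n^3$ terms are identical to the paper's.
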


\begin{proof}
For $v\in V$, define $I_v=l_v^{AAB}-l_v^{ABB}$. Then, for $a\in A$,
\[
I_a=l_a^{AAB}-l_a^{ABB}=|\h(A\setminus \{a\},A\setminus \{a\},B\cup \{a\}, B\cup \{a\})|-|\h(A,A,B,B)|,
\]
while for $b\in B$,
\[
I_b=l_b^{AAB}-l_b^{ABB}=|\h(A,A,B,B)|-|\h(A\cup \{b\},A\cup \{b\},B\setminus \{b\}, B\setminus \{b\})|.
\]
Thus, for all $a\in A$ and $b\in B$,
\[
|\h(A\setminus \{a\}\cup \{b\},A\setminus \{a\}\cup \{b\},B\setminus \{b\}\cup \{a\},
B\setminus \{b\}\cup \{a\})|=|\h(A,A,B,B)|+I_a-I_b+O(n^2).
\]
Here the $O(n^2)$ term comes from the edges $abuv$, where $uv\in N_{\h}(a,b)$.
Hence, by the minimality of $b(\h)$, we must have
\[
I_a \geq I_b-O(n^2).
\]

Suppose that there exists $a\in A$ and $b\in B$ such that $l_{b}^{ABB}\leq \frac{\e}{2} n^3$
and $l_{a}^{ABB}>\frac{3}{2}\e n^3$. Then by \eqref{eq5},
\[
I_b=l_b^{AAB}-l_b^{ABB}=l_b^{AAB}+l_b^{ABB}-2l_b^{ABB}\geq \frac{1}{2}n^3-\e n^3
\]
and
\[
I_a=l_a^{AAB}-l_a^{ABB}< \frac{1}{2}n^3-\frac{3}{2}\e n^3\leq I_b-\frac{1}{2}\e n^3,
\]
a contradiction.

The proof of the second statement is analogous.
\end{proof}

The next claim justifies the name ``typical" and it shows that the number of atypical
vertices is small.

\begin{claim}\label{claim1}
Assuming \eqref{eq1}, \eqref{eq2} and \eqref{eq3}, for all $\e_0, \e_1>0$, less than $8(\e_0/\e_1)n$
vertices in $\h$ are $\e_1$-atypical. Among them, less than $5\e_0 n$ vertices in $A$ and less than
$5\e_0 n$ vertices in $B$ are $\e_1$-anarchists, provided $\e_1<1/5$.
\end{claim}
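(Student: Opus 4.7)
The plan is to deduce both parts of the claim from a single double-counting of the atypical edges, whose total is bounded by \eqref{eq3}.

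First I would unpack the definitions: every $\e_1$-atypical $a\in A$ fails the $\e_1$-typical inequality and therefore satisfies $l_a^{ABB}>\e_1|A|\binom{|B|}{2}$; symmetrically, every $\e_1$-atypical $b\in B$ satisfies $l_b^{AAB}>\e_1\binom{|A|}{2}|B|$. On the other hand, each edge of $\h(A,A,B,B)$ contributes to $l_v^{ABB}$ for exactly its two $A$-endpoints (deleting an $A$-endpoint leaves an $ABB$ triple) and to $l_v^{AAB}$ for exactly its two $B$-endpoints, so
\[
\sum_{a\in A}l_a^{ABB}\;=\;\sum_{b\in B}l_b^{AAB}\;=\;2|\h(A,A,B,B)|\;\leq\;2\e_0 n^4.
\]
If $m_A$ denotes the number of $\e_1$-atypical vertices in $A$, combining these with \eqref{eq2} (which forces $|A|,|B|\geq(1-5\e_0)n$) yields $m_A<4\e_0 n/\e_1$ once $\e_0$ is taken small, and the same holds for $B$, giving the claimed total bound $<8\e_0 n/\e_1$.

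For the anarchist count I would use \eqref{eq4}/\eqref{eq5} to upgrade the lower bound on $l_a^{ABB}$ (resp.\ $l_b^{AAB}$): if $a\in A$ is an $\e_1$-anarchist then $l_a^{AAB}\leq\e_1\binom{|A|}{2}|B|$, and \eqref{eq4} rearranges to
\[
l_a^{ABB}\;\geq\;\tfrac{1}{2}|B|(|A|-1)\bigl[(n-1)-\e_1|A|\bigr],
\]
which, using $\e_1<1/5$ together with \eqref{eq2} and $\e_0$ small, exceeds $\tfrac{2}{5}n^3$. Inserting this much stronger lower bound into the double-counting identity above caps the number of anarchists in $A$ by $\frac{2\e_0 n^4}{(2/5)n^3}=5\e_0 n$, and the analogous argument using \eqref{eq5} handles the $B$ side.

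No step of this plan is a serious obstacle: the only care required is bookkeeping the $O(\e_0)$ error coming from the possibly unbalanced sizes in \eqref{eq2}, so that the target constants $8$ and $5$ are not spoiled. This is absorbed by choosing $\e_0$ sufficiently small at the outset, so the whole argument reduces to the two routine inequalities above.
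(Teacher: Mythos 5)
Your proposal is correct and follows essentially the same double-counting argument as the paper: bound the number of atypical (resp.\ anarchist) vertices by comparing the per-vertex lower bound on $l_a^{ABB}$ (the latter via \eqref{eq4}) against $|\h(A,A,B,B)|$, each $AABB$ edge being counted twice on each side. The only cosmetic difference is that you phrase the counting as the exact identity $\sum_{a\in A} l_a^{ABB}=2|\h(A,A,B,B)|$ and split the two sides from the start, whereas the paper counts all atypical vertices at once with a factor $\leq 4$; both give the stated constants.
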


\begin{proof}
Let $x$ be the number of $\e_1$-atypical vertices in $\h$. Then, since each of these vertices contributes
more than $\frac{1}{2}\e_1 n^3$ edges to $|\h(A,A,B,B)|$, and every such
edge is counted at most four times, we have
\[
\frac{1}{4}x \cdot \frac{1}{2}\e_1 n^3<\e_0 n^4,
\]
which implies that $x<8(\e_0/\e_1)n$.

Now, let $x'$ be the number of $\e_1$-anarchists in A. By \eqref{eq4}, every $\e_1$-anarchist
$a\in A$ contributes at least $l_a^{ABB}\geq \frac{1}{2} |B| (|A|-1|)(n-1)-l_a^{AAB}\geq
\frac{1}{2}(1-\e_1)n^3-O(n^2)$ edges to $|\h(A,A,B,B)|$, and these edges are counted at most twice. Hence
\[
\frac{1}{2}x'\cdot (\frac{1}{2}(1-\e_1) n^3-O(n^2))<\e_0 n^4,
\]
which implies $x'<5\e_0 n$ since $\e_1<1/5$.

The proof of the statement $b\in B$ is analogous.
\end{proof}

\medskip

Now we classify the pair of vertices in $\h$ by the values $l_{uv}^{AA}$, $l_{uv}^{AB}$ or $l_{uv}^{BB}$
as follows.
\begin{definition}
Fix $\e>0$. A pair of vertices
\begin{itemize}
\item $\{a_1,a_2\}$ is $\e$-typical if $l_{a_1 a_2}^{BB}\leq \e \binom{|B|}{2}$;
\item $\{a,b\}$ is $\e$-typical if $l_{a b}^{AB}\leq \e |A||B|$;
\item $\{b_1,b_2\}$ is $\e$-typical if $l_{b_1 b_2}^{AA}\leq \e \binom{|A|}{2}$;
\item $\{u,v\}\subseteq V(\h) $ is $(\e_1,\e_2)$-typical if both $u$ and $v$ are $\e_1$-typical
and the pair $\{u,v\}$ is $\e_2$-typical.
\end{itemize}
\end{definition}

%Assuming that (1),(2),(3), the hyperedges containing a typical pair in $\h$ are almost contained in $\h_0$.
{\bf Observation.} From \eqref{eq1},
 \begin{align*}
 &l_{a_1a_2} ^{AB}+2l_{a_1a_2}^{BB}\geq |B| (n-1) ,\\&
 l_{ab}^{AB}+2l_{ab}^{AA}\geq (|A|-1)(n-1)  \ and\ l_{ab}^{AB}+2l_{ab}^{BB}\geq (|B|-1)(n-1),\\&
 l_{b_1b_2}^{AB}+2l_{b_1b_2}^{AA}\geq |A|(n-1).
 \end{align*}

Hence, if $\{ a_1, a_2\}$, $\{a,b\}$ and $\{b_1,b_2\}$ are $\e$-typical, then by definition,
we have
\begin{equation}\label {pair1}
l_{a_1 a_2}^{AB} \geq n(n-1)- \e n^2,
\end{equation}
\begin{equation}\label{pair2}
l_{a b}^{AA}+l_{ab}^{BB} \geq (n-1)^2-\e n^2,
\end{equation}
\begin{equation}\label{pair3}
l_{b_1b_2}^{AB}\geq n(n-1)-\e n^2.
\end{equation}

\medskip

Next we show that each typical vertex is contained in a small number of atypical pairs.
\begin{claim}\label{claim2}
Assuming \eqref{eq1} and \eqref{eq2}, for all $\e_1,\e_2>0$, every $\e_1$-typical vertex in $A$
belongs to at most $(\e_1/\e_2)n$ $\e_2$-atypical pairs in $AA$ and at most $(\e_1/\e_2)n$ $\e_2$-atypical
pairs in $AB$. Moreover, every $\e_1$-typical vertex in $B$ belongs to at most
$(\e_1/\e_2)n$ $\e_2$-atypical pairs in $BB$ and at most $(\e_1/\e_2)n$ $\e_2$-atypical pairs in $AB$.
\end{claim}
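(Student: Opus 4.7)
The plan is to prove Claim~\ref{claim2} by a short double-counting argument relating the link $l_a^{ABB}$ of a single $\e_1$-typical vertex $a\in A$ to the pair-links $l_{aa'}^{BB}$ and $l_{ab}^{AB}$ of pairs through $a$. The key observation is that every $AABB$ edge incident to $a$ contains exactly one other $A$-vertex and exactly two $B$-vertices, which yields the two identities
\[
\sum_{a'\in A\setminus\{a\}} l_{aa'}^{BB} \;=\; l_a^{ABB}
\qquad\text{and}\qquad
\sum_{b\in B} l_{ab}^{AB} \;=\; 2\,l_a^{ABB}.
\]

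From this the claim follows by a one-line averaging. For the first estimate, $\e_1$-typicality gives $l_a^{ABB}\leq \e_1|A|\binom{|B|}{2}$, while every $\e_2$-atypical pair $\{a,a'\}\subseteq A$ contributes more than $\e_2\binom{|B|}{2}$ to the first sum; dividing yields fewer than $(\e_1/\e_2)|A|\leq(\e_1/\e_2)n$ such pairs. The second estimate is identical after accounting for the factor $2$ and using that an $\e_2$-atypical pair $\{a,b\}$ contributes $l_{ab}^{AB}>\e_2|A||B|$ to the second sum, giving fewer than $(\e_1/\e_2)(|B|-1)\leq(\e_1/\e_2)n$ atypical $AB$-pairs through $a$. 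The ``moreover'' statement for an $\e_1$-typical $b\in B$ is handled by the symmetric identities $\sum_{b'\neq b}l_{bb'}^{AA}=l_b^{AAB}$ and $\sum_{a\in A}l_{ab}^{AB}=2\,l_b^{AAB}$, combined with the typicality bound $l_b^{AAB}\leq\e_1\binom{|A|}{2}|B|$.

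The only subtlety is keeping track of multiplicities --- each $AABB$ edge through $a$ is counted once when summing over its unique other $A$-vertex but twice when summing over its two $B$-vertices --- and no deeper obstacle is anticipated. Hypothesis~\eqref{eq2} enters only to replace $|A|$ and $|B|$ by $n$ up to the small multiplicative error that the paper tolerates in the non-balanced case, so the argument is insensitive to the exact sizes of the parts.
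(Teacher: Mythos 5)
Your proof is correct and follows essentially the same route as the paper's: the paper argues by contradiction, observing that each $\e_2$-atypical $AA$ pair through $a$ contributes more than $\e_2\binom{|B|}{2}$ to $l_a^{ABB}$ (and each atypical $AB$ pair contributes $>\e_2|A||B|$ but with a factor $\tfrac12$ from the double-count over the two $B$-vertices), which is exactly the content of your two identities phrased as an averaging. Your version just makes the multiplicity bookkeeping explicit before dividing, which is a stylistic difference, not a different argument.
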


\begin{proof}
Let $a\in A$ be $\e_1$-typical. If $a$ belongs to more than $(\e_1/\e_2)n$ $\e_2$-atypical pairs in $AA $, then
 \[
l_{a}^{ABB}>\frac{\e_2}{2}n^2\times \frac{\e_1}{\e_2}n=\frac{\e_1}{2}n^3 ,
\]
contradicting the $\e_1$-typicality of $a$. Similarly, if $a$ belongs to more than
$(\e_1/\e_2)n$ $\e_2$-atypical pairs in $AB $, then,
\[
 l_{a}^{ABB}>\frac{1}{2}\e_2 n^2\times \frac{\e_1}{\e_2}n=\frac{\e_1}{2}n^3,
\]
a contradiction.

The proof of the statement for $\e_1$-typical vertex in $B$ is analogous.
\end{proof}

The triples of vertices in $\h$ are classified as follows.
\begin{definition}
Fix $\e>0$. A triple of vertices
\begin{itemize}
\item $\{a_1,a_2,a_3\}$ is $\e$-typical if $d_B(a_1,a_2,a_3)\geq (1-\e)|B|$;
\item $\{a_1,a_2,b\}$ is $\e$-typical if $d_B(a_1,a_2,b)\leq \e|B|$;
\item $\{a,b_1,b_2\}$ is $\e$-typical if $d_A(a,b_1,b_2)\leq \e|A|$;
\item $\{b_1,b_2,b_3\}$ is $\e$-typical if $d_A(b_1,b_2,b_3)\geq (1-\e)|A|$;
\item $\{u,v,w\}\subseteq V(\h)$ is $(\e_1,\e_2,\e_3)$-typical if each of $u$, $v$ and $w$ is
$\e_1$-typical, each of pairs $\{u,v\}$, $\{v,w\}$ and $\{u,w\}$ is $\e_2$-typical, and the triple
 $\{u,v,w\}$ is $\e_3$-typical.
\end{itemize}
\end{definition}

 %We show that the hyperedges containing a typical triple in $\h$ are almost contained in $\h_0$.
{\bf Observation.}
From \eqref{eq1},
 \begin{align*}
d_A(a_1,a_2,b)+d_B(a_1,a_2,b)\geq n-1, \\
d_A(a,b_1,b_2)+d_B(a,b_1,b_2)\geq n-1.
 \end{align*}

Hence, if $\{ a_1, a_2,a_3\}, \{a,b_1,b_2\}, \{a_1,a_2,b\}$ and $\{b_1,b_2,b_3\}$ are $\e$-typical,
then by definition, we have
\begin{equation}\label {tp1}
d_B(a_1,a_2,a_3)\geq n-1-\e n,
\end{equation}
\begin{equation}\label{tp2}
d_A(a_1, a_2,b) \geq n-1-\e n,
\end{equation}
\begin{equation}\label{tp3}
d_B(a, b_1,b_2) \geq n-1-\e n,
\end{equation}
\begin{equation}\label {tp4}
d_A(b_1,b_2,b_3)\geq n-1-\e n.
\end{equation}

The following two claims show that any typical vertex or typical pair is contained
in a small number of atypical triples.
\begin{claim}\label{claim3}
Assuming \eqref{eq1} and \eqref{eq2}, for all $\e_1,\e_3>0$, every $\e_1$-typical vertex in $A$ belongs to
at most $(\e_1/\e_3)n^2$ $\e_3$-atypical triples in each type of
$AAB, ABB$ and $AAA$. Moreover, every $\e_1$-typical vertex in $B$ belongs to at most
$(\e_1/\e_3)n^2$ $\e_3$-atypical triples in each type of
$AAB, ABB$ and $BBB$.
\end{claim}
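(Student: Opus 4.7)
My plan is to follow the template of Claim~\ref{claim2}, splitting into the three triple-types. Throughout I work in the balanced case $|A|=|B|=n$ (Observation~(i)) and fix an $\e_1$-typical vertex $a\in A$. For the $AAB$ type I will argue by double counting against $L_a^{ABB}$: each $\e_3$-atypical $\{a,a',b\}$ produces more than $\e_3 n$ edges $\{a,a',b,b'\}$, hence more than $\e_3 n$ link triples $\{a',b,b'\}\in L_a^{ABB}$, and each such link triple is charged at most twice (by the two $AAB$ sub-triples of the underlying edge that contain $a$). The $\e_1$-typicality of $a$ then bounds $l_a^{ABB}\leq \e_1|A|\binom{|B|}{2}$ and yields $m_1\e_3 n<2l_a^{ABB}\leq \e_1 n^3$, i.e.\ $m_1\leq(\e_1/\e_3)n^2$. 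For the $ABB$ type the same scheme applies, except that the pair $\{b,b'\}$ is already determined by the atypical triple, so the charging is one-to-one and the factor of $2$ disappears.

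The $AAA$ case is what I expect to be the main obstacle, because atypicality there is defined by a \emph{shortage} of $B$-extensions rather than an abundance of $A$-extensions, so the link-counting used above does not apply directly. My plan is to exploit the identity
\[
l_a^{AAB} \;=\; \sum_{\{a',a''\}\subseteq A\setminus\{a\}} d_B(a,a',a'')
\]
and to compare two estimates of the right-hand side. The upper estimate bounds each typical term by $n$ and each atypical term by $(1-\e_3)n$, giving $l_a^{AAB}\leq \binom{n-1}{2}n-m_3\e_3 n$, where $m_3$ is the number of $\e_3$-atypical $AAA$ triples through $a$. The lower estimate $l_a^{AAB}\geq \tfrac12 n(n-1)^2-\tfrac12\e_1 n^3$ comes from \eqref{onevert1}. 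The leading $\tfrac12 n^3$ terms on the two sides cancel algebraically, and what remains is $m_3\e_3 n\leq \tfrac12\e_1 n^3-\tfrac12 n(n-1)\leq \tfrac12\e_1 n^3$, yielding $m_3\leq(\e_1/\e_3)n^2$.

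The analogous bounds for an $\e_1$-typical vertex $b\in B$ follow by exchanging $A\leftrightarrow B$ throughout: the $AAB$ and $ABB$ cases use direct link-counting against $L_b^{AAB}$, while the $BBB$ case is handled by the same sum-splitting trick I just used for $AAA$, now applied to $l_b^{ABB}=\sum_{\{b',b''\}\subseteq B\setminus\{b\}}d_A(b,b',b'')$, with \eqref{onevert2} in place of \eqref{onevert1}. As flagged in Observation~(i), the unbalanced case $|A|\neq|B|$ introduces no new idea — only slightly worse constants hidden in the $\binom{|A|-1}{2}|B|$ versus $\tfrac12 n^3$ comparison — and requires no separate treatment.
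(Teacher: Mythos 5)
Your proof is correct and follows essentially the same approach as the paper: double counting $\e_3$-atypical $AAB$ and $ABB$ triples against $L_a^{ABB}$ (with the factor-of-2 versus one-to-one charging distinction handled correctly), and for $AAA$ comparing the sum $\sum_{\{a',a''\}}d_B(a,a',a'')=l_a^{AAB}$ term-by-term against the lower bound from \eqref{onevert1}. The only superficial difference is a bit more bookkeeping in your exposition; the inequalities and the resulting bound match the paper's argument.
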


\begin{proof}
Let $a\in A$ be $\e_1$-typical. If $a$ belongs to more than $(\e_1/\e_3)n^2$ $\e_3$-atypical triples in
$AAB$ or more than $(\e_1/\e_3)n^2$ $\e_3$-atypical triples in
$ABB$, then
\[
l_{a}^{ABB}>\frac{1}{2}(\e_1/\e_3)n^2\times \e_3 n=\frac{\e_1}{2}n^3 \ \ or \ \ l_{a}^{ABB}>
(\e_1/\e_3) n^2\times \e_3 n>\frac{\e_1}{2}n^3,
\]
contradicting the $\e_1$-typicality of $a$. Let $x$ be the number of $\e_3$-atypical triples in $AAA$.
Then by (2.4),
\[
(\frac{1}{2}-\frac{\e_1}{2})n^3\leq l_{a}^{AAB}=\sum_{a_1,a_2\neq a}d_B(a_1,a_2,a)\leq x(1-\e_3)n
+(\frac{1}{2}n^2-x)n=\frac{1}{2}n^3-x\e_3 n.
\]
So $x\leq (\e_1/2\e_3)n^2\leq (\e_1/\e_3)n^2$.

The proof of the statement for $\e_1$-typical vertex in $B$ is analogous.
\end{proof}

\begin{claim}\label{claim4}
Assuming \eqref{eq1} and \eqref{eq2}, for all $\e_2,\e_3>0$, every $\e_2$-typical pair $\{a_1,a_2\}$, or
$\{a,b\}$, or $\{b_1,b_2\}$ belongs to at most $(\e_2/\e_3)n$ $\e_3$-atypical
triples in each of the four types $AAA$, $AAB$, $ABB$, and $BBB$.
\end{claim}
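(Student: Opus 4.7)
The plan is to mimic the proof of Claim~\ref{claim3} with a standard averaging argument, handling each of the three pair types and, for each, the triple types that actually contain a pair of that type; the remaining triple types contribute zero vacuously (for instance $\{a_1,a_2\}$ lies in no $BBB$ or $ABB$ triple). Throughout I work in the balanced case $|A|=|B|=n$ as allowed by Observation~(i).

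The key mechanism is that for a pair $P$ and an extending vertex $v$, summing $d_X(P\cup\{v\})$ over all choices of $v$ recovers a link size $l_P^{Y_1 Y_2}$ of $P$. When the atypicality of a triple is an \emph{upper}-tail condition, i.e.\ $d>\e_3 n$ (which is how $AAB$ and $ABB$ triples are declared atypical), I use an \emph{upper} bound on the link that comes directly from the $\e_2$-typicality of $P$. When it is a \emph{lower}-tail condition, $d<(1-\e_3)n$ (the $AAA$ and $BBB$ case), I instead use a \emph{lower} bound on the opposite-side link, supplied by \eqref{pair1} or \eqref{pair3}.

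Concretely, for $P=\{a_1,a_2\}$ the $AAB$ triples are indexed by $b\in B$ and
\[
\sum_{b\in B} d_B(a_1,a_2,b)=2\,l_{a_1 a_2}^{BB}\le \e_2 n(n-1),
\]
while the $AAA$ triples are indexed by $a_3\in A\setminus\{a_1,a_2\}$ with
\[
\sum_{a_3} d_B(a_1,a_2,a_3)=l_{a_1 a_2}^{AB}\ge n(n-1)-\e_2 n^2
\]
by \eqref{pair1}; comparing against $d_B\le n$ for typical and $d_B<(1-\e_3)n$ for atypical triples gives the bound $(\e_2/\e_3)n$ by exactly the $AAA$ computation performed in Claim~\ref{claim3}. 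For $P=\{a,b\}$, both $AAB$ extensions $a'\in A$ and $ABB$ extensions $b'\in B$ are controlled by the single inequality $l_{ab}^{AB}\le \e_2 n^2$: one has $\sum_{a'} d_B(a,a',b)=l_{ab}^{AB}$ and $\sum_{b'} d_A(a,b,b')=l_{ab}^{AB}$. For $P=\{b_1,b_2\}$, the $ABB$ triples use $\sum_{a} d_A(a,b_1,b_2)=2l_{b_1 b_2}^{AA}\le \e_2 n(n-1)$, and the $BBB$ triples use \eqref{pair3} in direct analogy with the $AAA$ case.

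There is no genuine obstacle: the only care needed is matching each triple-atypicality direction (upper- vs.\ lower-tail) with the appropriate side of the pair link inequality, and then performing the same averaging step as in Claim~\ref{claim3}. The arithmetic in each of the six (non-vacuous) cases is routine and yields the uniform bound $(\e_2/\e_3)n$.
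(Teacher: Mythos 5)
Your proof is correct and is essentially the same double-counting argument as in the paper: for upper-tail atypicality ($AAB$/$ABB$) you bound $l^{BB}$, $l^{AB}$, or $l^{AA}$ from above via the $\e_2$-typicality of the pair, and for lower-tail atypicality ($AAA$/$BBB$) you bound $l^{AB}$ from below via \eqref{pair1} or \eqref{pair3} and compare against the trivial upper bound $d\le n$; the paper carries this out explicitly for $\{a_1,a_2\}$ and declares the remaining cases analogous, as you have done.
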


\begin{proof}
Let $\{a_1,a_2\}$ be an $\e_2$-typical pair. If $\{a_1,a_2\}$ belongs to more than
$(\e_2/\e_3)n$ $\e_3$-atypical triples in $AAB$, then
\[
l_{a_1 a_2}^{BB}>\frac{1}{2}(\e_2/\e_3)n\times \e_3 n=\frac{\e_2}{2}n^2,
\]
contradicting the $\e_2$-typicality of $\{a_1,a_2\}$.
Let $x$ be the number of $\e_3$-atypical triples in $AAA$. Since
$l_{a_1a_2}^{AB}+2l_{a_1 a_2}^{BB} \geq |B|(n-1)$, we have
\[
(1-\e_2)n^2\leq l_{a_1a_2}^{AB}=\sum_{a\neq a_1,a_2}d_B(a_1,a_2,a)\leq x(1-\e_3)n+(n-x)n=n^2-x\e_3 n;
\]
we have $x\leq (\e_2/\e_3)n$.

The proof of the statement for $\{a,b\}$ and $\{b_1,b_2\}$ are analogous.
\end{proof}

If $|\h(A,A,B,B)|$ is small, then by Claim~\ref{claim1}, $\h$ does not contain too many atypical vertices.
Next, we claim that the number of atypical triples in $\h$ is also small.
%By the above claims, we derive the following corollary stating that $\h$ contains a small number
%of ``atypical" vertices.
\begin{corollary}\label{coro1}
Assuming \eqref{eq1}, \eqref{eq2} and \eqref{eq3}, for all $\e_0,\e_1,\e_2,\e_3>0$ and for every $\e_4\geq 16(\e_0/\e_1)+4(\e_1/\e_2)+(\e_1/\e_3)$, every set of at least $\e_4n^3$ triples in $\binom{V(\h)}{3}$ contains at least one $(\e_1,\e_2,\e_3)$-typical triple. In particular, there are less than $\e_4n^3$ triples in $\binom{V(\h)}{3}$ which are not $(\e_1,\e_2,\e_3)$-typical.
\end{corollary}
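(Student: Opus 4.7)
The plan is a direct union-bound. Each triple $T\in\binom{V(\h)}{3}$ that fails to be $(\e_1,\e_2,\e_3)$-typical lies in exactly one of three disjoint classes, according to the first level where typicality fails: (a) some vertex of $T$ is $\e_1$-atypical; (b) every vertex of $T$ is $\e_1$-typical but some pair in $T$ is $\e_2$-atypical; (c) every vertex and every pair of $T$ is typical but $T$ itself is $\e_3$-atypical. It will be enough to bound these classes by $16(\e_0/\e_1)n^3$, $4(\e_1/\e_2)n^3$ and $(\e_1/\e_3)n^3$ respectively, using the three preceding claims.

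Classes (a) and (b) are routine applications of Claim~\ref{claim1} and Claim~\ref{claim2}. For (a), Claim~\ref{claim1} gives fewer than $8(\e_0/\e_1)n$ $\e_1$-atypical vertices; each lies in at most $\binom{|V(\h)|-1}{2}\le 2n^2$ triples, yielding at most $16(\e_0/\e_1)n^3$ members. For (b), Claim~\ref{claim2} gives at most $2(\e_1/\e_2)n$ $\e_2$-atypical pairs through any $\e_1$-typical vertex (summing the two relevant pair-types); summing over the at most $2n$ typical vertices and halving for the two endpoints bounds the number of such pairs by $2(\e_1/\e_2)n^2$, and since each pair extends to at most $2n-2$ triples, class (b) contributes at most $4(\e_1/\e_2)n^3$.

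Class (c) is the more delicate one. For each $\e_1$-typical vertex $v$ I invoke Claim~\ref{claim3}, which bounds by $(\e_1/\e_3)n^2$ the number of $\e_3$-atypical triples of each relevant type through $v$. Summing these bounds over the three types per vertex, then over the at most $2n$ $\e_1$-typical vertices, and finally dividing by $3$ (since every class-(c) triple has three $\e_1$-typical vertices) gives a bound of the right order $(\e_1/\e_3)n^3$. The main obstacle will be squeezing the constant down to the declared $1$: a naive application produces a coefficient somewhat larger than $1$, so one must exploit the sharper per-type estimates that actually appear inside the proof of Claim~\ref{claim3} (in particular the factor-of-$2$ improvement for the $AAA$- and $BBB$-type cases, which arise because each atypical triple there has three rather than two vertices of its own part to absorb the overcounting). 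Adding the three class bounds then yields that the number of non-$(\e_1,\e_2,\e_3)$-typical triples is at most $\e_4 n^3$, giving the contrapositive of the corollary.
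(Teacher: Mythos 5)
You follow the paper's route exactly: the same decomposition of the non-$(\e_1,\e_2,\e_3)$-typical triples into three disjoint classes by where typicality first fails, and the same invocations of Claims~\ref{claim1}, \ref{claim2} and~\ref{claim3}. Your class-(a) and class-(b) estimates coincide with the paper's.

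The gap is in your class-(c) argument. You observe, correctly, that the naive count --- per-vertex bound $3(\e_1/\e_3)n^2$, summed over $2n$ vertices, divided by the multiplicity $3$ --- gives $2(\e_1/\e_3)n^3$, and you assert that the sharper per-type estimates hidden in the proof of Claim~\ref{claim3} drive this down to $(\e_1/\e_3)n^3$. They do not. Carrying out the sharpened count (for $\e_1$-typical $a\in A$: at most $(\e_1/\e_3)n^2$ atypical $AAB$-triples through $a$, and at most $\tfrac12(\e_1/\e_3)n^2$ each of types $ABB$ and $AAA$, with the symmetric bounds for $b\in B$), then dividing each type by its overcount ($2$ for $AAB$ and $ABB$, $3$ for $AAA$ and $BBB$), yields
\[
N_{AAA}+N_{BBB}\le\tfrac13\cdot\frac{\e_1}{\e_3}\,n^3,\qquad N_{AAB}+N_{ABB}\le\frac{\e_1}{\e_3}\,n^3,
\]
so class (c) contributes roughly $\tfrac43(\e_1/\e_3)n^3$, not $(\e_1/\e_3)n^3$. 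For what it is worth, the paper's own display for this class has a parallel slip: it multiplies by $\tfrac1{3!}$ where each atypical triple is only overcounted at most $3$ times (once per vertex), so the bound actually produced by the paper's argument is $2(\e_1/\e_3)n^3$, not the $(\e_1/\e_3)n^3$ it writes. Nothing downstream is affected, since the corollary is always applied with $\e_i=\e^{4-i}$ and $\e_4=40\e$, which leaves ample slack over $16\e+4\e+c\e$ for any modest constant $c$; but if you want the corollary to be literally correct as stated, the coefficient of $\e_1/\e_3$ in the required lower bound for $\e_4$ should be taken to be $2$ rather than $1$.
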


\begin{proof}
It suffices to count all triples $\{u,v,w\}\subseteq V(\h)$, such that at least one of them is $\e_1$-atypical,
or all of $\{u,v,w\}$
are $\e_1$-typical and one of the pairs from $\{u,v,w\}$ is not $\e_2$-typical, or all vertices
are $\e_1$-typical and all pairs from $\{u,v,w\}$ are $\e_2$-typical, but $\{u,v,w\}$ is not $\e_3$-typical.

By Claim~\ref{claim1}, the number of triples, of which at least one vertex is $\e_1$-atypical, is at most
$8(\e_0/\e_1) n \times \binom{2n-1}{2}$.
By Claim~\ref{claim2}, the number of triples, of which all three vertices are $\e_1$-typical but at least
one pair is $\e_2$-atypical, is at most $2n \times (2(\e_1/\e_2) n)\times (2n-2)\times \frac{1}{2!}$.
By Claim~\ref{claim3}, the number of triples, of which all vertices  are $\e_1$-typical and all pairs
are $\e_2$-typical but $\{u,v,w\}$ is not $\e_3$-typical, is at most $2n\times (3(\e_1/\e_3) n^2 )\times \frac{1}{3!}$.

Hence, the number of  all these atypical triples are at most $\e_4 n^3$.
\end{proof}

\subsection{Short paths between typical triples}
In this section, we prove that if $|H(A,A,B,B)|$ is small and $\delta_3(\h)$ is large then certain
typical triples can be connected by a path of length at most 12.
Recall that the 4-graph $\h_0=\h_0(A,B)$
consists of all $AAAB$ and $ABBB$ quadruples. (Here, we allow non-balanced partitions $(A, B)$; however,
they must satisfy \eqref{eq2}.) A sextuple of vertices $(v_1,v_2,v_3,$ $w_1,w_2,w_3)$ is called {\it $\h_0$-connected} if both $\{v_1,v_2,v_3\}$ and $\{w_1,w_2,w_3\}$ belong to $AAV$ or both $\{v_1,v_2,v_3\}$ and $\{w_1,w_2,w_3\}$ belong to $BBV$. We can call it an $\h_0$-connected sextuple formed by the triples $\{v_1,v_2,v_3\}$ and $\{w_1,w_2,w_3\}$.
%and there exists a path in $\h_0$ connecting these two triples.
Given a set of vertices $K$, a path $P$ is $K$-${\it avoiding}$ if $V(P)\cap K=\emptyset$.
A subset of vertices $T\subseteq V(\h)$ is said to be $\h_0$-${\it complete}$ if $E(\h[T])\supseteq E(\h_0[T])$.
We show that for an $\h_0$-connected sextuple formed by two $(\e_1,\e_2,\e_3)$-typical triples, there is a path
in $\h_0$ connecting these two triples.

%We now prove Claim~\ref{triconn} implying that for any two ``typical" triples belonging to $AAV$ or $BBV$,
%the sextuple consisting of these two triples is $\h_0$-connected and the path connecting them has length at most 11.
\begin{claim}\label{triconn}
Let $\e_0,\e_1,\e_2,\e_3$ be sufficiently small and assume that \eqref{eq1}, \eqref{eq2} and \eqref{eq3} hold.
Let $(v_1,v_2,v_3,w_1,w_2,w_3)$ be an $\h_0$-connected sextuple in $\h$, where $\{v_1,v_2,v_3\}$ and $\{w_1,w_2,w_3\}$ are two $(\e_1,\e_2,\e_3)$-typical triples. For every set of vertices $K\subseteq V(\h)\setminus \{v_1,v_2,v_3,w_1,w_2,$ $w_3 \}$ with $|K|\leq \frac{2}{3}n$, there exists a subset $T\subseteq V(\h)\setminus (K\cup \{v_1,v_2,v_3,w_1,w_2,w_3 \} )$ such that $|T\cap A|,|T\cap B|\geq 5$, and $T\cup \{v_1,v_2,v_3 \}$ and $T\cup \{w_1,w_2,w_3 \}$ are $\h_0$-complete. In particular, there exists a $K$-avoiding path $P$ in $\h$ with at most 12 vertices such that the end triples of $P$ are $\{v_1,v_2,v_3 \}$ and $\{w_1,w_2,w_3 \}$ and all edges in $P$ are typical.
%moreover, if both triples are in $AAV$ (or $BBV$), all edges in $P$ are $AAAB$ (or $ABBB$) edges.
\end{claim}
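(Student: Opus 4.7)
The overall strategy has two stages. First, build the set $T$ with the stated $\h_0$-completeness. Second, extract the path: a tight $4$-uniform path of the form $v_1 v_2 v_3\, t_1 t_2 t_3 t_4 t_5 t_6 \, w_1 w_2 w_3$ on at most $12$ vertices, with internal vertices $t_i$ drawn from $T$, has the property that no $4$-window meets both $\{v_1,v_2,v_3\}$ and $\{w_1,w_2,w_3\}$. Consequently, every edge of such a path lies entirely in $T\cup\{v_1,v_2,v_3\}$ or entirely in $T\cup\{w_1,w_2,w_3\}$, and the $\h_0$-completeness of these two sets forces each such edge to belong to $\h$ and be typical. Since $E(\h_0)$ consists of exactly the $4$-subsets with an odd number of $A$-vertices, the type sequence along a tight path in $\h_0$ is periodic modulo $4$ with either $1$ or $3$ entries of $A$ per period; because both end triples lie in $AAV$ (the $BBV$ case is symmetric), a short case analysis on whether each end triple is $AAA$ or $AAB$ pins down the required $4$-periodic type pattern for $t_1,\dots,t_6$, and one checks that no case needs more than $5$ vertices of either part, which is exactly what $|T\cap A|,|T\cap B|\ge 5$ provides.

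Constructing $T$ I would do greedily, adding one vertex at a time and alternating between $A\setminus(K\cup\{v_1,\ldots,w_3\})$ and $B\setminus(K\cup\{v_1,\ldots,w_3\})$ until five have been chosen on each side. A candidate $t$ is declared \emph{forbidden} if: (i) $t$ is not $\e_1$-typical; (ii) $t$ forms an $\e_2$-atypical pair with some previously chosen vertex or with some $v_i$ (resp.\ $w_i$) on its side; (iii) $t$ forms an $\e_3$-atypical triple with two such vertices on its side; or (iv) for some $3$-subset $S$ of $T\cup\{v_1,v_2,v_3\}$ or of $T\cup\{w_1,w_2,w_3\}$, the set $\{t\}\cup S$ has odd $A$-count but fails to lie in $E(\h)$. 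Claims~\ref{claim1}, \ref{claim2}, \ref{claim3}, and \ref{claim4} together bound the vertices violating (i)--(iii) by $O(\e_0 n + \e_1 n/\e_2 + \e_1 n/\e_3 + \e_2 n/\e_3)$. For (iv), the typicality maintained in (i)--(iii) and the hypothesis that $\{v_1,v_2,v_3\}$ and $\{w_1,w_2,w_3\}$ are $(\e_1,\e_2,\e_3)$-typical ensure that every relevant $3$-subset $S$ is itself an $\e_3$-typical triple of the appropriate type, so by \eqref{tp1}--\eqref{tp4} each such $S$ excludes only $O(\e_3 n)$ candidate $t$'s; there are at most $\binom{16}{3}$ such triples, contributing $O(\e_3 n)$ further forbidden vertices. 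The total forbidden set thus has size $O(\e n)$ for a small combined constant, while \eqref{eq2} together with $|K|\le 2n/3$ leaves at least $n/3-O(\e_0 n)$ available vertices on each side. Hence a valid $t$ exists at each of the at most ten greedy steps, provided $\e_0,\e_1,\e_2,\e_3$ are chosen small enough.

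The main obstacle is the bookkeeping around the several typicality parameters, which must be ordered $\e_0\ll\e_1\ll\e_2\ll\e_3$ so that each greedy addition preserves enough typicality in the pairs and triples it forms with the already-chosen vertices; without this, the bound on the forbidden set in condition (iv) would degrade across iterations and the inductive step would fail. Once $T$ is produced, extracting the path is routine: for each pairing of end-triple types in $AAV$, an appropriate reordering of $v_1,v_2,v_3$ and $w_1,w_2,w_3$ together with an ordering of chosen internal vertices from $T$ realizes the forced $4$-periodic type pattern of length at most $12$, and every edge of the resulting path is a typical $\h_0$-edge by $\h_0$-completeness of $T\cup\{v_1,v_2,v_3\}$ and of $T\cup\{w_1,w_2,w_3\}$.
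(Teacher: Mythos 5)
Your greedy construction is a genuinely different route from the paper's proof, which instead selects $T$ \emph{at random}: each vertex of $V(\h)\setminus(K\cup\{v_1,\dots,w_3\})$ is included independently with probability $p=60/n$, and the authors bound the probabilities of $T\cup\{v_1,v_2,v_3\}$ or $T\cup\{w_1,w_2,w_3\}$ failing to be $\h_0$-complete (via a union bound over the four ``codimensions'' of a missing edge, i.e.\ $P_0+3P_1+3P_2+P_3$, appealing to Claim~\ref{edges} and the typicality bounds \eqref{onevert1}--\eqref{tp4}) together with the probability of $|T\cap A|\le 4$ or $|T\cap B|\le 4$, and show the total is below $1$. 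Your alternating greedy selection, forbidding vertices that would break vertex-, pair-, triple-, or quadruple-level typicality, achieves the same thing deterministically using Claims~\ref{claim1}--\ref{claim4} to bound the forbidden sets; this is more elementary but needs more bookkeeping. Two small points to tighten: first, the phrase ``on its side'' in your conditions (ii)--(iii) should be dropped --- you must forbid $\e_2$-atypical pairs and $\e_3$-atypical triples of \emph{all} types ($AA$, $AB$, $BB$, and all four triple types), since your bound on the bad set in condition (iv) relies on every already-chosen triple $S$ being $\e_3$-typical, and those triples are mostly of mixed type; Claim~\ref{claim2} and Claim~\ref{claim4} already give bounds per type, so this is harmless but must be stated. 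Second, your forbidden-set estimate should read $O\bigl((\e_0/\e_1)n + (\e_1/\e_2)n + (\e_2/\e_3)n + \e_3 n\bigr)$ --- the correct ratios coming from Claims~\ref{claim1}, \ref{claim2}, \ref{claim4}, and the triple-degree bounds respectively --- rather than the terms you wrote; under $\e_0=\e^4,\dots,\e_3=\e$ these all collapse to $O(\e n)$, so the conclusion stands. The path-extraction step at the end (type-pattern case analysis, exploiting that no $4$-window meets both end triples) mirrors what the paper does informally after its probabilistic construction, and is fine.
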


\begin{proof}
We select a set $T$ at random, by choosing each vertex of $V(\h)\setminus (K\cup \{v_1,v_2,v_3,w_1,w_2,w_3 \})$
independently with probability $p=60/n$. We will show that it satisfies all required properties
with positive probability.

Let $E_v$ and $E_w$ be the events that the subsets $T\cup \{v_1,v_2,v_3 \}$ and
$T\cup \{w_1,w_2,w_3 \}$ are not $\h_0$-complete, and let $E=E_v\cup E_w$. We claim that
\[
\mathds{P}(E_v)\leq P_0+3P_1+3P_2+P_3,
\]
where $P_0$ is the probability that $T$ is not $\h_0$-complete, $P_1$ is the probability that there exist
$x,y,z\in T$ such that $v_i x y z\in E(\h_0) \setminus E(\h[T\cup \{v_i \}])$,
$P_2$ is the probability that there exist $x,y\in T$ such that $v_i v_j x y\in E(\h_0) \setminus
E(\h[T\cup \{v_i,v_j \}])$, and $P_3$ is the probability that there exist $x\in T$ such that
$v_1 v_2 v_3 x\in E(\h_0)\setminus E(\h[T\cup \{v_1,v_2,v_3 \}])$.

By Claim \ref{edges} with $c=\epsilon_0$ and $c_1=1/n$,
we know $|E(\h_0)\setminus E(\h)| \leq 2\e_0 n^4$.
(Although the partition of $V(\h)$ might not be balanced, the result of Claim \ref{edges} still holds with a larger constant.)
Thus, $P_0 \leq 2\e_0 n^4 p^4$.
By~\eqref{onevert1} and~\eqref{onevert2}, for any $1\leq i\leq 3$, the number of edges of $\h_0$ containing
$v_i$ that are not edges of $\h$ is at most $ \e_1 n^3$, since $v_i$ is $\e_1$-typical. Thus, $P_1\leq  \e_1 n^3p^3$.
By~\eqref{pair1},~\eqref{pair2} and~\eqref{pair3}, for any $1\leq i \neq j\leq 3$, the number of edges in $\h_0$
containing the pair $\{v_i,v_j\}$  that are not edges of $\h$ is at most $\e_2 n^2$, since $\{v_i,v_j\}$ is
$\e_2$-typical. Thus, $P_2 \leq \e_2 n^2 p^2$.
By~\eqref{tp1},~\eqref{tp2},~\eqref{tp3} and~\eqref{tp4}, the number of edges in $\h_0$ containing the triple
$\{v_1, v_2, v_3\}$ that are not edges of $\h$ is at most $\e_3 n$, since $\{v_1,v_2,v_3\}$ is $\e_3$-typical.
Thus, $P_3\leq \e_3 np$.

Hence,
\[
P(E_v)\leq 2\e_0n^4p^4+3\cdot \e_1 n^3p^3+3\cdot \e_2 n^2p^2+\e_3 np<\frac{1}{4}
\]
for $\e_0,\e_1,\e_2,\e_3$ sufficiently small.  Similarly, $P(E_w)<\frac{1}{4}$.

Finally, recalling that $
|A\setminus (K\cup \{v_1,v_2,v_3,w_1,w_2,w_3 \})|\geq \frac{1}{3}n-6>\frac{1}{4}n+4,
$
we have
\[
P(|T\cap A|\leq 4)\leq (1+np+\binom{n}{2}p^2+\binom{n}{3}p^3+\binom{n}{4}p^4)(1-p)^{\frac{n}{4}}<\frac{1}{4}.
\] Similarly, $P(|T\cap B|\leq 4)<1/4$. Hence, the required set $T$ does exist.

\medskip

Consider the case when an $H_0$-connected sextuple is formed by two $(\e_1,\e_2,\e_3)$-typical triples $\{a_1, a_2, a_3\}$ and $\{a_4, a_5, a_6\}$. By the above argument, the required set $T$ exists. Suppose $\{b_1, a, a', a'', b_2\}\subseteq T$. Then by the properties of $T$, $P=a_1a_2a_3 b_1 a a' a'' b_2 a_4a_5a_6$ is a $K$-avoiding path with $11$ vertices in $\h$ and all edges of $P$ are $AAAB$ edges. For other cases, it can be checked that the two $(\e_1,\e_2,\e_3)$-typical triples in any $H_0$-connected sextuple can be connected by a $K$-avoiding path with at most $12$ vertices in which every edge is typical. Moreover, if both triples are in $AAV$ (or $BBV$), all edges in this $K$-avoiding path connecting these two triples are $AAAB$ (or $ABBB$) edges.
\end{proof}
%Note that if an $\h_0$-connected sextuple of vertex $\{v_1,v_2,v_3,w_1,w_2,w_3 \}$ satisfying the conditions of Claim \ref{triconn}, then there exists a path $P$ in $V(\h)\backslash K$ with at most 12 vertices and whose end triplescare $\{v_1,v_2,v_3 \}$ and $\{w_1,w_2,w_3 \}$. For example,  consider an $H_0$-connected sextuple formed by two $(\e_1,\e_2,\e_3)$-typical triples $\{a_1, a_2, a_3\}$ and $\{a_4, a_5, a_6\}$. By Claim~\ref{triconn}, there exits $T\supseteq \{b_1, a, a', a'', b_2\}$ such that $a_1a_2a_3 b_1 a a' a'' b_2 a_4a_5a_6$ is a path in $\h$. It can be checked that the two $(\e_1,\e_2,\e_3)$-typical triples in any $H_0$-connected sextuple can be connected by a path in which every edge is typical. Moreover, if both triples are in $AAV$ (or $BBV$), all edges in the path  obtained by Claim~\ref{triconn} connecting these two triples are $AAAB$ (or $ABBB$) edges.
\section{Hamiltonian paths}
In this section, we prove the following

\begin{theorem}\label{E}
There exists $\e_0>0$ such that, for sufficiently large $n$ and any $4$-graph $\h$ on $2n$ vertices with
$b(\h)<\e_0 n^4$ the following holds.
If $\delta_3(\h)\geq n-1$, then $\h$ has a Hamiltonian path.
\end{theorem}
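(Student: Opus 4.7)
The plan is to apply the absorbing method in the near-extremal setting, following the template of R\"{o}dl--Ruci\'{n}ski--Szemer\'{e}di \cite{Rodl2011} but adapted to the structure of $\h_0$. Fix a balanced partition $V(\h)=A\cup B$ with $|A|=|B|=n$ achieving $b(\h)=|\h(A,A,B,B)|<\e_0 n^4$, so that \eqref{eq1}--\eqref{eq3} hold; choose constants $\e_0\ll\e_1\ll\e_2\ll\e_3$. By Claims \ref{claim1}--\ref{claim4} and Corollary \ref{coro1}, all but $O(\e_0 n)$ vertices are $\e_1$-typical and all but $O(\e_0 n^3)$ triples are $(\e_1,\e_2,\e_3)$-typical. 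Claim \ref{triconn} then joins any two matching-type (both $AAV$ or both $BBV$) typical triples by a short $K$-avoiding path with only typical edges.

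I first build an \emph{absorbing path} $P_{\text{abs}}$. For every vertex $v\in V(\h)$, I would define an absorber for $v$ as a configuration of $O(1)$ extra vertices that forms a path both without $v$ and with $v$ inserted in a prescribed slot. Using \eqref{onevert1}--\eqref{tp4} together with the tight structural link to $\h_0$ provided by Claim \ref{edges}, every $v$ admits $\Omega(n^{c})$ such absorbers for some absolute constant $c>0$. Picking $\Theta(\e_1 n)$ of them at random, deleting overlaps, and stitching the survivors with Claim \ref{triconn}, produces a single path $P_{\text{abs}}$ of size at most $\e_1 n$ with $(\e_1,\e_2,\e_3)$-typical end triples and the following absorbing property: for every balanced set $S\subseteq V(\h)\setminus V(P_{\text{abs}})$ with $|S|\le \e_2^2 n$, there is a path on $V(P_{\text{abs}})\cup S$ with the same end triples as $P_{\text{abs}}$.

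Next, I set aside a random reservoir $R$ of size $\Theta(\e_2 n)$ disjoint from $V(P_{\text{abs}})$ and balanced across $A$ and $B$, chosen so that Claim \ref{triconn} still connects every pair of matching-type typical triples using only vertices of $R$. I then cover all but $O(\e_0 n)$ vertices of $V(\h)\setminus (V(P_{\text{abs}})\cup R)$ by a bounded number of vertex-disjoint paths with typical ($AAAB$ or $ABBB$) edges and $(\e_1,\e_2,\e_3)$-typical end triples, built greedily inside $E(\h)\cap E(\h_0)$ and using the scarce atypical edges to accommodate the few atypical vertices. Iterating Claim \ref{triconn} with $K$ equal to the already-used portion, I concatenate $P_{\text{abs}}$ with all these paths through vertices of $R$ into a single path $P^*$ whose complement $L=V(\h)\setminus V(P^*)$ consists of the unused part of $R$ together with the leftover from the cover. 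By trimming the long-path constructions one $A$- or $B$-vertex at a time as needed, $L$ is arranged to be balanced with $|L|\le \e_2^2 n$; the absorbing property of $P_{\text{abs}}$ then swallows $L$ to yield a Hamiltonian path.

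\emph{Main obstacle.} The central difficulty is the absorber construction in the second step. Because $\h$ is close to $\h_0$, in which every edge has odd intersection with $A$, inserting either an $A$- or a $B$-vertex into a path obeys a parity condition, and any local change in a near-$\h_0$ path requires a compensating move elsewhere. One must therefore design a uniform absorber template compatible with both sides and verify, using only the typicality estimates of Section 2, that each vertex has $\Omega(n^{c})$ absorbers of this template --- a count which must survive the deficit $|E(\h_0)\setminus E(\h)|=O(\e_0 n^4)$ coming from Claim \ref{edges}. Once this lower bound is in hand, the reservoir selection, long-path cover, connection, and final absorption are standard applications of Claim \ref{triconn} and the typicality machinery developed in Section 2.
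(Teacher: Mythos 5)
Your proposal follows the absorbing-method template of R\"{o}dl--Ruci\'{n}ski--Szemer\'{e}di, which is genuinely different from the paper's approach. The paper instead argues deterministically: it builds a short \emph{bridge} $M$ (one $AAA$ end triple, one $BBB$ end triple) to transition between the $AAAB$-edge and $ABBB$-edge regimes of $\h_0$, absorbs the few medium vertices into a path $Q$ containing $M$, reclassifies anarchists by moving them to the opposite side of the partition, and then directly constructs the remainder of the Hamiltonian path via Lemma~\ref{seq} (which uses the Reiher--R\"{o}dl--Ruci\'{n}ski--Schacht--Szemer\'{e}di 3-graph result Lemma~\ref{3gphc}) together with perfect matchings in auxiliary bipartite graphs $\Gamma_A,\Gamma_B$, with very careful bookkeeping of how many $A$- and $B$-vertices the top and zigzag paths consume.

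There are two concrete gaps in your plan. First, you flag the parity obstacle yourself but do not resolve it, and it is not a technicality: near $\h_0$, any path built from typical ($AAAB$ or $ABBB$) edges has its intersection pattern with $A$ essentially forced (roughly $3{:}1$ or $1{:}3$), so a local ``absorber'' that adds one vertex of a prescribed side cannot be a short path of the same type --- it changes the residue class of $3|V\cap A|-|V\cap B|$. Asserting that each $v$ has $\Omega(n^c)$ absorbers therefore requires designing a configuration that compensates this residue shift, which is not supplied; the paper instead avoids per-vertex absorbers entirely and compensates globally through the counts $m_1,m_2,t$ in Lemma~\ref{le3}. Second, Claim~\ref{triconn} only connects two $(\e_1,\e_2,\e_3)$-typical triples that are both of type $AAV$ or both of type $BBV$. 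Your step ``concatenate $P_{\text{abs}}$ with all these paths through vertices of $R$'' needs at some point to join an $AAAB$-type path to an $ABBB$-type path, and Claim~\ref{triconn} gives no way to do this; the paper's bridge (Lemma~\ref{le1}) exists precisely to make this transition, and your proposal has no analogue of it. Finally, the claim that the leftover set $L$ can be made balanced of size at most $\e_2^2 n$ by ``trimming one $A$- or $B$-vertex at a time'' conflicts with the forced $3{:}1$ pattern inside near-$\h_0$ paths; the size constraint at the end is a genuine arithmetic constraint (mod $8$, as in Section~4) that cannot be fixed by trimming alone.
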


For typical vertices, we want to use paths similar to these in $\h_0$ to connect them. So we need to deal
with atypical vertices, which are medium vertices or anarchists. By Claim~\ref{claim1}, the number of such
vertices is small. In our proof, we find a path to absorb all medium vertices.  By Claim~\ref{fact1}, the
anarchists can only exist on one side. We may transfer all anarchists to the other side, so that all vertices will be typical in a new partition.

First, we introduce a structure called {\it bridge}, which helps us construct a path containing all
medium vertices.

\begin{definition}
Given $\e_1,\e_2,\e_3>0$, an $(\e_1,\e_2,\e_3)$-bridge is a path of at most $800$ vertices
whose end triples are $(\e_1,\e_2,\e_3)$-typical with one in $AAA$ and the other in $BBB$.
\end{definition}

For convenience, for some small $\e$ we set
\[
\e_0=\e^4, \ \ \e_1=\e^3, \ \ \e_2=\e^2, \ \ \e_3=\e, \ \ \e_4=40\e, \ \ \e_5=120\e.
\]

 The proof of Theorem \ref{E} can be described in four steps:
 Build a bridge $M$ (cf. Lemma \ref{le1});
 arrest all medium vertices by a path $Q$ containing $M$ (cf. Lemma \ref{le2});
 transfer all anarchists not belonging to $Q$ to the other side of the partition;
 complete the Hamiltonian path $P$ (cf. Lemma \ref{le3}).

\subsection{Building a bridge}

\begin{lemma}\label{le1}
For sufficiently small $\e>0$, $\h$ contains an $(\e_1,\e_2,\e_3)$-bridge $M$ with at most $25$ vertices.
\end{lemma}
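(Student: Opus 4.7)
I would build $M$ as a concatenation of three pieces: a short path $P_1$ from a typical $AAA$-triple $T_A$ to a typical $AAB$-triple $T_1$; a 4-vertex ``switch'' $P_M$ through a single $AABB$-edge, carrying $T_1$ to a typical $ABB$-triple $T_2$; and a short path $P_2$ from $T_2$ to a typical $BBB$-triple $T_B$. Since $(T_A,T_1)\subseteq AAV$ and $(T_2,T_B)\subseteq BBV$ are both $\h_0$-connected sextuples of $(\e_1,\e_2,\e_3)$-typical triples, Claim~\ref{triconn} yields each flanking path on at most $12$ vertices; after accounting for the $3$-vertex overlap at each of the two junctions, the bridge has at most $12+4+12-6=22\le 25$ vertices.

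\textbf{Existence of the ingredients.} By Corollary~\ref{coro1}, all but at most $\e_4 n^3$ triples are $(\e_1,\e_2,\e_3)$-typical, so typical triples of each of the types $AAA$, $AAB$, $ABB$, $BBB$ exist in abundance. Since $|A|+|B|=2n$, WLOG $|A|\le n$. For every $AAB$-triple $T$, among the $d_3(T)\ge n-1$ extensions of $T$ to an edge at most $|A|-2$ are of type $AAAB$, so at least $n+1-|A|\ge 1$ are of type $AABB$. Summing over $T$ yields $|\h(A,A,B,B)|\ge cn^3$ for some absolute constant $c>0$; in particular at least one $AABB$-edge exists.

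\textbf{Finding a good switching edge.} Call $e_0=\{a_1,a_2,b_1,b_2\}\in\h(A,A,B,B)$ \emph{good} if, after relabeling, $T_1:=\{a_1,a_2,b_1\}$ is a typical $AAB$-triple and $T_2:=\{a_2,b_1,b_2\}$ is a typical $ABB$-triple; then $P_M:=a_1a_2b_1b_2$ is a 4-vertex path in $\h$ with end triples $T_1,T_2$. An $e_0$ is non-good only when both its $AAB$-subtriples are atypical or both its $ABB$-subtriples are atypical. Using Claim~\ref{claim4} to bound the number of atypical triples through any typical pair, and Claims~\ref{claim1} and~\ref{claim2} for atypical vertices and pairs, one shows the number of non-good $AABB$-edges is strictly less than $|\h(A,A,B,B)|$, so a good $e_0$ exists.

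\textbf{Assembly and main obstacle.} Pick typical $T_A\in AAA$ and $T_B\in BBB$ disjoint from $V(e_0)$ (possible since only $O(n^2)$ typical triples meet $V(e_0)$). Apply Claim~\ref{triconn} to the $\h_0$-connected sextuple $(T_A,(a_1,a_2,b_1))$ with $K:=V(e_0)\cup T_B$ to obtain $P_1$ with at most $12$ vertices ending at $a_1a_2b_1$; then to $((a_2,b_1,b_2),T_B)$ with $K:=V(e_0)\cup V(P_1)$ to obtain $P_2$. Concatenating $P_1\circ P_M\circ P_2$ yields the required $(\e_1,\e_2,\e_3)$-bridge. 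The principal obstacle is the existence of a good $e_0$ when $|\h(A,A,B,B)|$ is close to its lower bound $\Theta(n^3)$: a crude bound on non-good $AABB$-edges via Claim~\ref{claim4} is only $O(\e n^4)$, which can exceed $|\h(A,A,B,B)|$. The resolution is a self-consistency double count: each atypical $AAB$-triple $T$ contributes at least $\e n$ $AABB$-extensions (by definition of $\e_3$-atypicality), so a large number of atypical $AAB$- or $ABB$-triples forces $|\h(A,A,B,B)|$ to grow correspondingly, providing the slack needed to conclude that good $AABB$-edges outnumber the bad ones.
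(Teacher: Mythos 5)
Your overall architecture is right (produce a short path with one $(\e_1,\e_2,\e_3)$-typical $AAV$ end triple and one $(\e_1,\e_2,\e_3)$-typical $BBV$ end triple, then apply Claim~\ref{triconn} twice to push the ends to $AAA$ and $BBB$), and your vertex count $12+4+12-6=22\le 25$ is fine. But the step you flag as the ``principal obstacle'' --- the existence of a \emph{good} $AABB$-edge --- is a genuine gap, and your proposed fix does not close it. Being $(\e_1,\e_2,\e_3)$-typical requires \emph{three} things: all vertices $\e_1$-typical, all pairs $\e_2$-typical, and the triple $\e_3$-typical. Your self-consistency double count only exploits the $\e_3$-layer (an $\e_3$-atypical $AAB$-triple forces $\ge\e_3 n$ $AABB$-extensions). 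It says nothing about $\e_1$-atypical \emph{vertices}. In an $AABB$-edge $\{a,a',b,b'\}$, any ordering making one end triple $AAB$ puts both $a$ and $a'$ into that triple, and any ordering making the other end $ABB$ puts both $b$ and $b'$ into it; so a single $\e_1$-atypical vertex in the edge kills \emph{every} ordering. A single $\e_1$-atypical (medium) vertex $a\in A$ can have $l_a^{ABB}$ on the order of $n^3$ while the total $|\h(A,A,B,B)|$ may also be only $\Theta(n^3)$, so it is not a priori excluded that essentially every $AABB$-edge contains an atypical vertex; Claims~\ref{claim1}--\ref{claim4} bound the \emph{number} of atypical vertices/pairs/triples but not the \emph{number of $AABB$-edges incident to them} relative to $b(\h)$. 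Your ``good $AABB$-edge exists'' claim may well be true, but it is not established by the sketch, and it demands a more delicate argument than the lemma itself.

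The paper avoids this entirely by starting from two $(\e_1,\e_2)$-typical \emph{pairs} $\{a_1,a_2\}\subseteq A$ and $\{b_1,b_2\}\subseteq B$ rather than from a good quadruple. If $a_1a_2b_1b_2\in E(\h)$, it then uses Claim~\ref{claim4} to append one vertex $x$ to the $A$-side and one vertex $y$ to the $B$-side so that $\{x,a_1,a_2\}$ and $\{b_1,b_2,y\}$ are $(\e_1,\e_2,\e_3)$-typical; the end triples are chosen \emph{after} the quadruple, so the sub-triples of the quadruple itself need not be typical. If $a_1a_2b_1b_2\notin E(\h)$, the codegree condition gives $|N(a_1,a_2,b_1)\cap N(a_1,b_1,b_2)|\ge 2$, and a common neighbour $z$ yields the path $x\,a_2\,a_1\,z\,b_1\,b_2\,y$. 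This two-case argument only needs typicality of \emph{pairs}, which Claims~\ref{claim1}, \ref{claim2}, \ref{claim4} deliver directly, and is why the paper's proof is short. I'd recommend replacing the good-$AABB$-edge route with the pair-based route, or, if you want to keep yours, you must show explicitly that the $AABB$-edges incident to $\e_1$-atypical vertices and $\e_2$-atypical pairs do not account for all of $\h(A,A,B,B)$ --- this is exactly the part that is currently missing.
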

\begin{proof}
%[Proof of Lemma~\ref{le1}]

Fix two $(\e_1,\e_2)$-typical pairs $\{a_1,a_2\}$ and $\{b_1,b_2\}$.
Suppose $a_1 a_2 b_1 b_2\in E(\h)$. Since $\delta_3(\h)\geq n-1$ and $\{a_1, a_2\}, \{b_1,b_2\}$ are
$(\e_1,\e_2)$-typical, it follows from Claim~\ref{claim4} that there exists $x\in N(a_1,a_2,b_1)\setminus\{b_2\}$ and $y\in N(a_1,b_1,b_2)\setminus\{a_2,x\}$, such that $\{x, a_1,a_2\}$ and $\{b_1,b_2,y\}$ are $(\e_1, \e_2, \e_3)$-typical. Hence, $x a_2 a_1 b_1 b_2 y$ is a path in $\h$. Now we show that the path $P=x a_2 a_1 b_1 b_2 y$ can be extended to an $(\e_1,\e_2,\e_3)$-bridge by Claim~\ref{triconn}. By Corollary~\ref{coro1}, there exist $(\e_1,\e_2,\e_3)$-triples $\{a_1',a_2',a_3'\}$ and $\{b_1',b_2',b_3'\}$ disjoint from $V(P)$. Since $\{a_1',a_2',a_3'\}$ and $\{x,a_1,a_2\}$ are $AAV$ triples, there exists a $\{b_1,b_2,y, b_1',b_2',b_3'\}$-avoiding path $P_1=a_1'a_2'a_3'\cdots x a_1 a_2$ with at most $12$ vertices by Claim~\ref{triconn}. Similarly, there exists a $V(P_1)$-avoiding path $P_2=b_1b_2y\cdots b_1'b_2'b_3'$ with at most $12$ vertices. Hence, we extend $P$ to $P_1\cup P \cup P_2$ such that $P_1\cup P \cup P_2$ is an $(\e_1,\e_2,\e_3)$-bridge with at most $24$ vertices and the end triples are $\{a_1',a_2',a_3'\}$ and $\{b_1',b_2',b_3'\}$.

So assume $a_1 a_2 b_1 b_2\notin E(\h)$. Let $X=N(a_1,a_2,b_1)$ and $Y=N(a_1,b_1,b_2)$.
Since $a_1 a_2 b_1 b_2\notin E(\h)$, we have $X\cup Y\subseteq V(\h)\setminus \{a_1,a_2,b_1,b_2\}$
and $|X\cup Y|\leq 2n-4$. Since $\delta_3(\h)\geq n-1$, $|X|\geq n-1$ and $|Y|\geq n-1$.
So $|X\cap Y|=|X|+|Y|-|X\cup Y|\geq 2$ implies that there exists a vertex $z\in X\cap Y$. Similarly, by
Claim~\ref{triconn}, we can find $x\in N(a_1,a_2,z)$ and $y\in N(b_1,b_2, z)$, such that $x a_2 a_1 z b_1 b_2 y$
can be extended to the desired bridge. Hence, in any case, we can always find an $(\e_1,\e_2,\e_3)$-bridge in $\h$ with no moth than 25 vertices.
\end{proof}

The construction in the proof of Lemma \ref{le1} can also be used for 3-graphs, which would shorten
Section 8 in \cite{Rodl2011}.

\subsection{Taking care of atypical vertices}
First, we need a simple claim from \cite{Rodl2011}.

\begin{claim}\label{claim5}(R\"{o}dl, Ruci\'{n}ski and Szemer\'{e}di \cite{Rodl2011})
Given $a>0$ and $k\geq 2$, every $k$-graph $\F$ with $m$ vertices and with at least $a\binom{m}{k}$ edges
contains a path on at least $am/k$ vertices.
\end{claim}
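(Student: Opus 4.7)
I would proceed by induction on the vertex count $m$, following the standard ``delete a low-degree vertex'' strategy. For the base case, observe that whenever $am/k \le k$ (in particular when $m \le k$), the claim is immediate: since $|E(\F)| \ge a\binom{m}{k} \ge 1$, the hypergraph $\F$ contains at least one edge, which is a path on $k \ge am/k$ vertices.

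For the inductive step (with $m > k$ and $am/k > k$), the averaging identity $\sum_{v} d(v) = k|E(\F)|$ ensures that some vertex $v$ satisfies
\[
d(v) \;\le\; \frac{k|E(\F)|}{m}.
\]
Deleting $v$ yields a $k$-graph $\F'$ on $m-1$ vertices with
\[
|E(\F')| \;\ge\; |E(\F)| - \frac{k|E(\F)|}{m} \;=\; \frac{m-k}{m}\,|E(\F)| \;\ge\; \frac{m-k}{m}\cdot a\binom{m}{k} \;=\; a\binom{m-1}{k},
\]
using the identity $\binom{m-1}{k} = \binom{m}{k}(m-k)/m$. Thus $\F'$ still has edge-density at least $a$, and the induction hypothesis furnishes a path on $\ge a(m-1)/k$ vertices in $\F'$; this is in particular a path in $\F$.

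\textbf{Main obstacle.} The inductive step preserves the density $a$, but the target length $am/k$ shrinks to $a(m-1)/k$, a loss of $a/k < 1$ per inductive call. Because vertex counts are integers and $a \le 1$, this loss is usually absorbed by rounding (since $\lceil a(m-1)/k\rceil$ and $\lceil am/k\rceil$ typically agree). However, in the boundary regime where $am/k$ lies just above an integer, the straightforward induction is short by exactly one vertex. The clean way to plug this gap is to case-split: when $am/k - \lfloor am/k\rfloor < a/k$ (the problematic fractional range), either $m$ is small enough that the base-case argument (a single edge is a path on $k \ge am/k$ vertices) already suffices, or one can apply the induction one step deeper to gain the extra integer. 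Setting up this bookkeeping cleanly is the only subtle point; the structural content of the proof is entirely the one-vertex deletion above.
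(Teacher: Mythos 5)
The paper itself does not prove this claim; it is quoted from R\"odl--Ruci\'nski--Szemer\'edi \cite{Rodl2011}. So your proposal has to stand on its own, and I don't think it does as written.

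You have correctly identified the Achilles' heel of the minimum-degree deletion induction, but your suggested repair does not close it, and in fact the difficulty is not merely a boundary rounding quirk -- it is structural. In the worst case (e.g.\ when $\F$ is degree-regular) the deletion preserves the density \emph{exactly}: $|E(\F')| = a\binom{m-1}{k}$ and nothing more. So the induction hypothesis at $m-1$ yields a path on $\geq a(m-1)/k$ vertices and no extra slack is available. Whenever the interval $\bigl(a(m-1)/k,\, am/k\bigr]$ contains an integer, $\lceil a(m-1)/k\rceil < \lceil am/k\rceil$ and the inductive step is short by exactly one vertex. This is not confined to small $m$; for instance $k=2$, $a=1/2$, $m=13$ gives target $\lceil 13/4\rceil = 4$ while the hypothesis at $m-1=12$ guarantees only $\lceil 3\rceil = 3$. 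Your two proposed escape routes both fail here: the base case requires $am/k\leq k$, i.e.\ $m\leq k^2/a$, which need not hold; and ``applying the induction one step deeper'' only makes matters worse, since the hypothesis at $m-2$ guarantees merely $\geq a(m-2)/k$, which is smaller still. More conceptually: along the min-degree deletion trajectory the density parameter $a$ is non-decreasing but can stay constant all the way down to the base case, and what the base case supplies is a path on $k$ vertices -- a constant -- whereas the claim demands $am/k$, which grows with $m$. An induction whose only input at the bottom is a bounded quantity cannot, by monotone propagation alone, certify an unbounded conclusion.

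To actually prove the claim one needs an argument that relates the number of edges to a \emph{single} maximal path directly, rather than pushing the burden down an $m\to m-1$ chain. The argument in \cite{Rodl2011} proceeds along such lines (a greedy/extremal counting around a maximal tight path), avoiding the compounding loss that your scheme incurs. Your write-up honestly flags the gap, which is good, but ``either the base case covers it or go one step deeper'' is not an argument -- it is precisely the step that would need to be supplied, and as the example above shows, neither option works.
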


\begin{lemma}\label{le2}
Let $z_1,\ldots,z_{t_1}$ be the $\e_5$-medium vertices and $K\subseteq V(\h)$ with $|K|< \e^3 n$. There exist pairwise disjoint $K$-avoiding paths $Q_1, \ldots, Q_{t_1}$ such that for every integer $i$ such that $1\le i \le t_1$, all edges in $Q_i$ are typical, and $Q_i$ contains $z_i$ with $|V(Q_i)|=7$ and both end triples $(\e_1,\e_2,\e_3)$-typical.
%moreover, if $z_i\in A$ (repectively, $B$), then all edges in $Q_i$ are $AAAB$ (respectively, $ABBB$) edges and both end triples belong to the type $AAB$ (respectively, $ABB$).
In particular, assume that $M$ is an $(\e_1,\e_2,\e_3)$-bridge. Then there exists a path $Q$ of length at most $\e^3 n$, which contains $M$ and all $\e_5$-medium vertices of $\h$, and whose end triples, one in $AAA$ and one in $BBB$, are $(\e_1,\e_2,\e_3)$-typical. Moreover, all edges in $Q-M$ are typical.
%there are two paths in $P-M$; one path contains all medium vertices in $A\setminus V(M)$ and all edges of this path are $AAAB$ edges while the other contains all medium vertices in $B\setminus V(M)$ and all edges are $ABBB$ edges.
\end{lemma}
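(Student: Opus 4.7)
The proof has two stages, matching the two statements in the lemma.

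\textbf{Stage 1 (individual $Q_i$).} Fix $z_i$; by symmetry assume $z_i \in A$. Since $z_i$ is $\e_5$-medium and $\e_1 < \e_5$, $z_i$ is not $\e_1$-typical, so $z_i$ cannot belong to any $(\e_1,\e_2,\e_3)$-typical triple. Hence in $Q_i = v_1 v_2 v_3 z_i v_5 v_6 v_7$, the vertex $z_i$ must sit at position $4$, and every edge of $Q_i$ passes through it. The only typical edges through $z_i \in A$ are of type $AAAB$ (via $AAB$ link triples), since $ABB$ link triples would give atypical $AABB$ edges. A short check shows that forcing every edge to be $AAAB$ pins the A/B pattern down to one of $AABAAAB$, $ABAAABA$, $BAAABAA$; I pick $AABAAAB$, so that both end triples are of type $AAB$.

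To build $Q_i$, I use the medium hypothesis $l_{z_i}^{AAB} > \e_5 \binom{|A|}{2}|B|$. By averaging over $B$, there is an $\e_1$-typical $v_3 \in B$ (avoiding $K$ and any previously-built $Q_j$) with $l_{z_i v_3}^{AA} \geq \e_5 \binom{|A|}{2}/2$. Define the auxiliary graph $G$ on $A$ by $\{a,a'\} \in E(G) \iff z_i v_3 a a' \in E(\h)$; after discarding from $V(G)$ the forbidden vertices, the $\e_1$-atypical ones, and those forming $\e_2$-atypical pairs with $v_3$ (a total of $O(\e n)$ vertices, losing $O(\e n^2)$ edges by Claims \ref{claim1}--\ref{claim2}), the reduced graph still has $\Omega(\e n^2)$ edges. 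Within this $G$ I pick an edge $\{v_5, v_6\}$ with $d_B(z_i, v_5, v_6) \geq \e_5 n/2$ (a positive fraction of edges satisfy this by averaging) and then pick sequentially $v_7 \in N_{\h}(z_i, v_5, v_6) \cap B$, $v_2 \in N_{\h}(z_i, v_3, v_5) \cap A$, and $v_1 \in N_{\h}(z_i, v_2, v_3) \cap A$, each time discarding atypical or forbidden choices. At every step Corollary \ref{coro1} and Claims \ref{claim2}--\ref{claim4} bound the bad choices by $O(\e n)$ against $\Omega(\e n)$ admissible ones, so both end triples $\{v_1,v_2,v_3\}$ and $\{v_5,v_6,v_7\}$ end up $(\e_1,\e_2,\e_3)$-typical. (A small averaging argument inside $G$ also guarantees that $\{v_5,v_6\}$ can be chosen so that $d_A(z_i, v_2, v_3)$ is $\Omega(\e n)$ for enough $v_2$, which is needed for $v_1$ to have a nonempty admissible neighbourhood.) The case $z_i \in B$ is handled symmetrically, with pattern $BBABBBA$ and end triples of type $ABB$.

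\textbf{Stage 2 (chaining into $Q$).} Let $S_A = \{i : z_i \in A\}$ and $S_B = \{i : z_i \in B\}$. All end triples of $Q_i$ with $i \in S_A$ are of type $AAB$ (hence in $AAV$); all end triples with $i \in S_B$ are of type $ABB$ (hence in $BBV$). The bridge $M$ has one $AAA$ end (in $AAV$) and one $BBB$ end (in $BBV$). By Corollary \ref{coro1}, pick $(\e_1,\e_2,\e_3)$-typical triples $T_A$ of type $AAA$ and $T_B$ of type $BBB$ disjoint from $\bigcup_i V(Q_i) \cup V(M) \cup K$. Concatenate the pieces in the order $T_A, Q_{i_1}, \ldots, Q_{i_{|S_A|}}, M, Q_{j_1}, \ldots, Q_{j_{|S_B|}}, T_B$, orienting $M$ so its $AAA$-end is adjacent to the $S_A$-block. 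Between every adjacent pair the two end triples in question lie in the same class ($AAV$ everywhere before $M$, $BBV$ everywhere after), so they form an $\h_0$-connected sextuple, and Claim \ref{triconn} supplies a connecting path of at most $12$ vertices with all edges typical, avoiding the running forbidden set (of size $O(\e^3 n) \ll \tfrac{2}{3}n$). Counting, and subtracting the $3$-vertex overlaps at each splice, gives $|V(Q)| \leq 13 t_1 + O(1) \leq \e^3 n$ for large $n$, using $t_1 \leq 8\e_0 n / \e_5 \leq \e^3 n/15$ from Claim \ref{claim1}. The end triples of $Q$ are $T_A$ (type $AAA$) and $T_B$ (type $BBB$), and all edges outside $M$ are typical.

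\textbf{Main obstacle.} Stage 1 is the technical heart: because $z_i$ is itself not typical, the path must hide $z_i$ at position $4$, which forces all four edges through $z_i$ and pins them to type $AAAB$. Locating a tight $4$-edge path in the $AAB$-link of $z_i$ whose two outer triples are $(\e_1,\e_2,\e_3)$-typical requires first choosing $v_3$ so that the pair-link $L^{AA}_{z_i v_3}$ is dense, and then choosing $\{v_5, v_6\}$ inside it with good codegree properties relative to $z_i$, all while respecting the forbidden set. The nested scaling $\e_0 \ll \e_1 \ll \e_2 \ll \e_3 \ll \e_4 \ll \e_5$ is exactly what makes each greedy pruning step leave $\Omega(\e n)$ valid choices. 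Stage 2 is then routine given Claim \ref{triconn}.
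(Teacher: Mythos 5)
Your Stage 2 (chaining the $Q_i$'s together with Claim~\ref{triconn} and the bridge $M$) is essentially the argument in the paper, and your recognition that $z_i$ must be buried in the middle position of a $7$-vertex $AAAB$-edge path with pattern $AABAAAB$ is also exactly right. The paper's key observation is that the four $AAAB$ edges of such a $Q_i$ through $z_i$ correspond precisely to the four edges of a tight $3$-uniform path $a_1a_2b_1a_3a_4b_2$ in the $3$-graph $L_{z_i}^{AAB}$; after intersecting with the set of $(\e_1,\e_2,\e_3)$-typical triples (Corollary~\ref{coro1}) and deleting $K$ and the vertices of $Q_1,\dots,Q_{i-1}$, the resulting $3$-graph $F_z^{AAB}$ still has $\Omega(\e n^3)$ edges, and Claim~\ref{claim5} then produces the needed $6$-vertex tight path in one shot. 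This makes the existence of $Q_i$ immediate.

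Your Stage 1, by contrast, tries to build that $6$-vertex pattern greedily, and this is where the gap lies. Concretely, after fixing $v_3$ and the graph $G$ with $|E(G)|=\Omega(\e n^2)$, you assert that a positive fraction of $E(G)$ has $d_B(z_i,v_5,v_6)\ge \e_5 n/2$ ``by averaging.'' But the averaging only says that a positive fraction of \emph{all} pairs in $\binom{A}{2}$ have large $d_B$; nothing forces this set of size $\Theta(\e n^2)$ to intersect $E(G)$, which is also only of size $\Theta(\e n^2)$ inside the universe $\binom{A}{2}$ of size $\Theta(n^2)$. The later greedy steps have the same problem, and it is worse there: to pick $v_2$ you need $d_A(z_i,v_3,v_5)$ to be large, and to pick $v_1$ you need $d_A(z_i,v_2,v_3)$ to be large, but the minimum co-degree condition only gives $d_A+d_B\ge n-1$, and since $z_i$ is \emph{not} $\e_1$-typical the triples $\{z_i,v_3,v_5\}$ and $\{z_i,v_2,v_3\}$ need not be typical, so $d_A$ can be as small as $1$. (You acknowledge this in your parenthetical about ``a small averaging argument inside $G$,'' but that argument is not supplied, and each such averaging would again only produce two $\Theta(\e n)$-sized subsets of $A$ with no guarantee of intersection.) These are exactly the obstacles Claim~\ref{claim5} is designed to bypass: finding a tight path in a merely dense $3$-graph cannot be done by naive greedy extension, which is why the paper invokes the RRS path lemma rather than choosing vertices one at a time.
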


\begin{proof}
%Let $z_1,\ldots,z_{t_1}$ be the $\e_5$-medium vertices. By Claim \ref{claim1}, we know that $t_1\leq 8(\e_0/\e_5)n$. Suppose that we have already found paths $Q_j$ for $j=1,\ldots,i-1$, where $Q_j$ is a path containing $z_j$ with $|V(Q_j)|=7$ and both end triples $(\e_1,\e_2,\e_3)$-typical. Moreover, if $z_j\in A$ (repectively, $B$), then both end triples of $Q_j$ belong to the type $AAB$ (respectively, $ABB$). We do a induction on the number of such paths. Set $z=z_i$.
By Claim \ref{claim1}, we know that $t_1\leq 8(\e_0/\e_5)n$. We do an induction on the number of such paths. Suppose that we have already found paths $Q_j$ for $j=1,\ldots,i-1$, such that $Q_j$ satisfies the properties in Lemma~\ref{le2}. Set $z=z_i$.

We may assume $z\in A$ as the proof is analogous for $z\in B$.
Let $G_z^{AAB}$ be the set of $(\e_1,\e_2,\e_3)$-typical triples in $L_z^{AAB}$.
By Corollary~\ref{coro1}, $l_z^{AAB}-|G_z^{AAB}|\leq \e_4 n^3$. Then $|G_z^{AAB}|\geq l_z^{AAB}-
\e_4 n^3\geq \frac{\e_5}{2}n^3-\e_4 n^3$, since $z$ is $\e_5$-medium. Further, let $F_z^{AAB}=G_z^{AAB}
[(V(\h)\setminus K)\setminus U_i]$, where $U_i=\bigcup_{j=1}^{i-1}V(Q_j)$. Note that
$|F_z^{AAB}|\geq |G_z^{AAB}|-(|U_i|+|K|) n^2$ and $(|U_i|+|K|)n^2\leq
(7(i-1)+\frac{1}{2} \e^3 n)n^2\leq 7t_1 n^2 + \e^3 n^3< 2\e^3 n^3$ for sufficiently large $n$.
Thus, by the above estimates and because $z$ is $\e_5$-medium, we have
\[
|F_z^{AAB}|\geq |G_z^{AAB}|-2\e^3 n^3 \geq (\frac{\e_5}{2}n^3-\e_4 n^3)-
2\e^3 n^3\ >10\e n^3;
\]
so by Claim~\ref{claim5}, $F_z^{AAB}$ contains a path of length six, i.e.,
$a_1 a_2 b_1 a_3 a_4 b_2$. Then $Q_i=a_1 a_2 b_1 z a_3 a_4 b_2$, disjoint from $Q_1,\ldots, Q_{i-1}$,
gives the desired path, since $Q_i$ is $K$-avoiding, the end triple of $Q_i$ are $(\e_1,\e_2,\e_3)$-typical $AAB$ triples and all edges in $Q_i$ are $AAAB$ edges.

\medskip
Now we have a given bridge $M$. Let $K=V(M)$ as $|V(M)|\le 800\le \e^3 n$. Set $Q_{z_i}=Q_i$ for $i=1, \ldots, t_1$.
By Claim~\ref{triconn}, for all $\e_5$-medium vertices $w\in A\setminus V(M)$, since the end triples of $Q_w$ are in $AAB$ and $(\e_1,\e_2,\e_3)$-typical, we connect all paths $Q_w$ into a $V(M)$-avoiding path, denoted by $Q_{top}$.
Similarly, for all $\e_5$-medium vertices $w\in B\setminus V(M)$, we connect all paths $Q_w$ into a $V(M\cup Q_{top})$-avoiding path, denoted by $Q_{zig}$, since the end triples of $Q_w$ are in $BBA$ and $(\e_1,\e_2,\e_3)$-typical.
Now we use the given bridge $M$ to connect $Q_{top}$ and $Q_{zig}$. We connect the end triple of $M$ in $AAA$ with one $AAB$ end triple of $Q_{top}$, and connect the end triple of $M$ in $BBB$ with one $BBA$ end triple of $Q_{zig}$, also by Claim~\ref{triconn}. Then we obtain a path $P$, which contains $M$ and all $\e_5$-medium vertices of $\h$, whose end triples are $(\e_1,\e_2,\e_3)$-typical, and one of its triples is in $AAB$ and the other is in $BBA$.

By Claim~\ref{triconn}, $|V(P)|\leq 8(\e_0/\e_5) n \cdot (7+(12-6))+ |V(M)|+2\cdot (12-6)$. By Corollary~\ref{coro1}, there exists an $(\e_1,\e_2,\e_3)$-typical triple $\{a_0,a_1,a_2\}$ in $AAA$ and an $(\e_1,\e_2, \e_3)$-typical triple $\{b_0,b_1,b_2\}$ in $BBB$, such that $\{a_0,a_1,a_2,b_0,b_1,b_2\}\cap V(P)=\emptyset$. We apply Claim~\ref{triconn} to connect  $\{a_0,a_1,a_2\}$ with the $AAB$ end triple of $P$, and connect $\{b_0,b_1,b_2\}$ with the $BBA$ end triple of $P$. This gives a path $Q=a_2 a_1 a_0  \cdots  b_0  b_1 b_2$, containing $M$ and all $\e_5$-medium vertices of $\h$, such that $|V(Q)|\leq |V(P)|+2\cdot(12-3)\leq 8(\e_0/\e_5)n \cdot (7+6)+ |V(M)|+2\cdot 6 + 2\cdot 9\leq \e^3 n$.
\end{proof}

\subsection{Completing the Hamiltonian path}
To complete the proof of Theorem~\ref{E}, we need the following lemma in \cite{Reiher2019}.
\begin{lemma} [Reiher, R\"{o}dl, Ruci\'{n}ski, Schacht and Szemer\'{e}di \cite{Reiher2019}]\label{3gphc}
Every 3-graph with $n$ vertices and minimum vertex degree at least
$(\frac{5}{9}+o(1))\binom{n}{2}$ has a Hamiltonian cycle.
\end{lemma}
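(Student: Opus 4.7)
The plan is to deploy the absorbing method of R\"{o}dl, Ruci\'{n}ski and Szemer\'{e}di, adapted to vertex-degree Dirac conditions for $3$-graphs, combined with the weak hypergraph regularity lemma and a stability dichotomy around the extremal configuration. Throughout, let $\h$ denote the given $3$-graph.

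The argument would proceed in four stages. (i) \emph{Absorbing path.} First I would build a short path $P_A$ in $\h$ with the property that, for every sufficiently small set $L\subseteq V(\h)\setminus V(P_A)$, there is a path on $V(P_A)\cup L$ with the same two endpoints as $P_A$. To construct $P_A$, I would argue that the minimum-degree hypothesis guarantees that every vertex $v$ lies in $\Omega(n^4)$ absorbing gadgets (short configurations whose insertion swallows $v$), select candidate gadgets randomly with small probability, and then concatenate them into a single path of bounded length by connecting consecutive gadgets through short links. (ii) \emph{Reservoir.} Next I would choose a small random set $R\subseteq V(\h)\setminus V(P_A)$ such that any two ``typical'' pairs of vertices can be joined by a short path using only vertices from $R$; this is a probabilistic construction exploiting the fact that typical triples have large common neighbourhoods. (iii) \emph{Almost-cover.} Apply the weak $3$-graph regularity lemma to $\h-V(P_A)-R$, pass to the cluster $3$-graph, and use the transferred vertex-degree condition to find an almost-spanning collection of long paths inside dense regular triples, covering all but $o(n)$ vertices. (iv) \emph{Assembly.} Use $R$ to splice the almost-cover into a single long path with endpoints matching those of $P_A$, then invoke the absorbing property of $P_A$ to swallow the $o(n)$ leftover vertices and close the whole thing into a Hamilton cycle.

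The main obstacle is the tightness of the constant $5/9$: it is forced by a specific extremal configuration (a partition-based construction on three nearly-equal parts), so any purely regularity-based approach must lose a constant somewhere. To handle this I would set up a stability dichotomy. Either $\h$ is $\mu$-close (in edit distance) to the extremal configuration, in which case a direct combinatorial argument exploiting the precise structure must be used to exhibit a Hamilton cycle by hand; or $\h$ is $\mu$-far from it, in which case the effective minimum degree in the cluster $3$-graph strictly exceeds $5/9$, giving the slack the regularity-plus-absorbing scheme needs to run with room to spare. Setting up this dichotomy quantitatively, and especially dispatching the near-extremal case (where the obvious path/cycle-building heuristics can fail precisely because $\h$ mimics the extremal example), is where I expect the bulk of the technical work to sit.
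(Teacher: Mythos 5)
The paper does not prove this lemma: it is imported as a black-box result from Reiher, R\"odl, Ruci\'nski, Schacht and Szemer\'edi \cite{Reiher2019}, so there is no proof in this paper against which your sketch can be checked. The lemma is used here only as a tool (in Lemma~\ref{seq} and Lemma~\ref{le3}) to produce Hamiltonian paths in auxiliary $3$-graphs of minimum vertex degree well above $\frac{2}{3}\binom{m}{2}$, where a much weaker degree threshold would already suffice.

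As an independent outline of how the cited result is actually established, your four-stage plan (absorbing path, reservoir, regularity-based almost-cover, assembly) together with the extremal/non-extremal dichotomy is indeed the correct skeleton: the $5/9$ threshold does come from an explicit extremal construction, the non-extremal regime is handled by absorption plus (weak) hypergraph regularity, and the extremal regime requires a separate structural argument. However, this is a high-level paraphrase rather than a proof. The genuinely hard content is absent: you have not identified the extremal configuration (a specific three-part construction whose density parameter produces $5/9$), you have not specified what ``absorbing gadget'' means for tight paths under a \emph{vertex}-degree (as opposed to codegree) hypothesis, and the claim that being $\mu$-far from extremal automatically lifts the cluster-degree above $5/9$ is asserted rather than argued --- in the actual proof this step is delicate and consumes a substantial fraction of the work. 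So the sketch is a correct statement of the proof strategy in \cite{Reiher2019}, but it should not be mistaken for a proof; the present paper sidesteps all of this by simply citing the result.
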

Lemma~\ref{le2} gives a path $Q$ containing all $\e_5$-medium vertices. By Claim~\ref{fact1},
we know that if there exists an $\e_5$-anarchist in one side of the vertex partition, then
all vertices in the other side are $3\e_5$-typical. Moreover, the number of $\e_5$-anarchists
is less than $5\e_0 n$. So we transfer all such vertices to the other side of the vertex partition.
Then all vertices in $V(\h)\setminus V(Q)$ are $4\e_5$-typical with respect to the new partition.
We use Lemma~\ref{3gphc} to derive the following.

\begin{lemma}\label{seq}
Assume $\eqref{eq1}$, $\eqref{eq2}$ and \eqref{eq3} hold. Let $X$ be a set of $4\e_5$-typical vertices with
$m=|X|\geq cn$, where $c$ is a constant. Suppose $\{x_0,x_1,x_2\}$ and $\{x_0',x_1',x_2'\}$ are two disjoint $
(4\e_5,\e_5^{3/4},\e_5^{1/2})$-typical triples disjoint from $X$. For sufficiently large $n$
and sufficiently small $\e$, there exists a sequence of vertices
$x_0x_1x_2x_3\cdots x_{m+2} x_{m+3} (=x_2') x_{m+4} (=x_1') x_{m+5} (=x_0'),$
such that all $\{x_i,x_{i+1},x_{i+2}\}$ are $(4\e_5,\e_5^{3/4},\e_5^{1/2})$-typical
for $0\leq i \leq m+3$ and $X=\{x_3, x_4,$ $\ldots, x_{m+2}\}$.
\end{lemma}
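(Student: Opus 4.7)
My plan is to deduce Lemma~\ref{seq} by applying Lemma~\ref{3gphc} to an auxiliary 3-graph built on $X$ and then stitching the two prescribed end-triples onto the resulting tight Hamiltonian cycle.

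\textbf{Auxiliary 3-graph and Hamiltonian cycle.} Let $\G$ be the 3-uniform hypergraph on $X$ whose edges are exactly the $(4\e_5,\e_5^{3/4},\e_5^{1/2})$-typical triples of $\h$ contained in $X$. For each $v\in X$, Claim~\ref{claim2} gives at most $4\e_5^{1/4}n$ $\e_5^{3/4}$-atypical pairs through $v$, contributing $O(\e_5^{1/4})\,nm$ non-edges through $v$ from pair-atypicality, and Claim~\ref{claim3} gives at most $O(\e_5^{1/2})\,n^2$ $\e_5^{1/2}$-atypical triples through $v$. Hence the number of triples through $v$ that fail to be edges of $\G$ is $O(\e_5^{1/4})\,n^2$. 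Using $m\ge cn$, this yields $\delta_1(\G)\ge\bigl(1-O(\e_5^{1/4}/c^2)\bigr)\binom{m-1}{2}$, which exceeds $(5/9+o(1))\binom{m-1}{2}$ for sufficiently small $\e$; Lemma~\ref{3gphc} then produces a tight Hamiltonian cycle $C=y_1y_2\cdots y_my_1$ in $\G$.

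\textbf{Breaking and stitching.} The intended sequence is $x_0x_1x_2\,y_{i+1}y_{i+2}\cdots y_{i-1}y_i\,x_2'x_1'x_0'$ for a suitably chosen breakpoint index $i$, which works provided the four junction triples
\[
\{x_1,x_2,y_{i+1}\},\ \{x_2,y_{i+1},y_{i+2}\},\ \{y_{i-1},y_i,x_2'\},\ \{y_i,x_2',x_1'\}
\]
are all $(4\e_5,\e_5^{3/4},\e_5^{1/2})$-typical. The hypothesis on the end-triples gives that $\{x_1,x_2\}$ and $\{x_1',x_2'\}$ are $\e_5^{3/4}$-typical; Claim~\ref{claim2} (each of $x_1,x_2,x_1',x_2'$ has $O(\e_5^{1/4}n)$ $\e_5^{3/4}$-atypical partners) together with Claim~\ref{claim4} (each of $\{x_1,x_2\},\{x_1',x_2'\}$ is in $O(\e_5^{1/4}n)$ $\e_5^{1/2}$-atypical triples) then show that at most $O(\e_5^{1/4}n)$ values of $i$ are ruled out by the per-vertex or per-pair conditions on $y_i,y_{i\pm 1}$.

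\textbf{Main obstacle and resolution.} The delicate contribution to the bad-index count is the triple-typicality of the mixed junctions $\{x_2,y_{i+1},y_{i+2}\}$ and $\{y_{i-1},y_i,x_2'\}$ when all sub-pairs are typical: the bound from Claim~\ref{claim3} at $x_2$ is only $O(\e_5^{1/2})n^2$, which for fixed $\e$ and large $n$ exceeds $m$, so a priori every cycle-adjacent pair could be bad. To overcome this, I would prune $\G$ \emph{before} invoking Lemma~\ref{3gphc}: delete every triple one of whose pairs is ``bad'' with $x_2$ or $x_2'$ (i.e., forms an $\e_5^{1/2}$-atypical triple together with $x_2$ or $x_2'$), and drop the $O(\e_5^{1/4}n)$ vertices of $X$ whose own pair with $x_2$ or $x_2'$ is $\e_5^{3/4}$-atypical. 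By Claim~\ref{claim4} each surviving vertex loses only $O(\e_5^{1/4})nm$ further edges, so the pruned 3-graph still has minimum degree $(1-o(1))\binom{|X'|-1}{2}$, well above the $5/9$ threshold; every Hamiltonian cycle in it automatically meets the mixed-triple condition, leaving only the per-vertex bad set of size $O(\e_5^{1/4}n)\ll m$ to avoid. The excluded vertices are threaded back into the cycle by a short absorption step in the spirit of Lemma~\ref{le2}. Choosing such a good $i$ and setting $x_3=y_{i+1},\ldots,x_{m+2}=y_i$ together with $x_{m+3}=x_2',\,x_{m+4}=x_1',\,x_{m+5}=x_0'$ gives the required sequence.
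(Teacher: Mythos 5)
Your high-level plan (auxiliary 3-graph on the target vertices plus Lemma~\ref{3gphc}) matches the paper's, but you miss the key trick that makes the paper's argument go through, and your workaround has a genuine gap. The paper's auxiliary 3-graph $G_X$ is built on $X\cup\{x_2\}$, and the hyperedges through $x_2$ are defined so that an edge $\{x_2,u,v\}\in G_X$ simultaneously encodes the junction conditions at \emph{both} ends: $\{x_2,u,v\}$, $\{x_1,x_2,u\}$, $\{x_1',x_2',v\}$ and $\{u,v,x_2'\}$ must all be $\e_5^{1/2}$-typical. A tight Hamiltonian cycle of $G_X$ then automatically yields, upon breaking at $x_2$ in the right direction, a path whose first and last junctions already satisfy the required typicality; no breakpoint has to be searched for, so the quadratic-vs-linear issue you correctly identified never arises. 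Your construction puts the auxiliary 3-graph on $X$ alone, so the cycle knows nothing about $x_2,x_2'$, and you are forced into pruning and choosing a good break index.

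The gap is in the re-insertion step. You drop the $O(\e_5^{1/4}n)$ vertices of $X$ whose pair with $x_2$ or $x_2'$ is $\e_5^{3/4}$-atypical (this dropping is indeed necessary, since without it the pruning could remove $\Theta(n^2)$ edges through such a vertex and destroy the minimum-degree bound), and then say these vertices are ``threaded back into the cycle by a short absorption step in the spirit of Lemma~\ref{le2}.'' But re-inserting a dropped vertex $z$ into the cycle $C$ at some window requires three new triples of the form $\{y_i,y_{i+1},z\}$, $\{y_{i+1},z,y_{i+2}\}$, $\{z,y_{i+2},y_{i+3}\}$ to be $\e_5^{1/2}$-typical, and the obstacle you flagged reappears unchanged: Claim~\ref{claim3} gives only $O(\e_5^{1/2}n^2)$ atypical triples through the $4\e_5$-typical vertex $z$, which can exceed the $O(n)$ available windows, and Claim~\ref{claim4} controls the number of bad third vertices for each fixed cycle pair, not the number of bad cycle pairs for the fixed $z$. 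So a priori there may be no good window for some dropped $z$. Moreover, the absorption of Lemma~\ref{le2} builds paths from vertices \emph{outside} the set being absorbed, which is disallowed here because the lemma requires exactly $X=\{x_3,\dots,x_{m+2}\}$. To repair the argument along your lines you would need a substantively new idea; the paper's choice to fold $x_2$ into the auxiliary 3-graph is precisely what sidesteps the whole difficulty.
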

\begin{proof}

Construct a 3-graph $G_X$ with vertex set $V(G_X)=X\cup \{x_2\}$ and edge set
\begin{align*}
E(G_X)= & \{ x_2 u v: \{u,v\}\subseteq X \text{ such that $\{x_2, u,v\}, \{x_1,x_2,u\}, \{x_1',x_2',v\}$
 and $\{u,v, x_2'\}$} \\
& \text{are $\e_5^{1/2}$-typical \} } \cup \{ uvw: \{ u,v,w\} \subseteq
X \text{ is $(4\e_5,\e_5^{3/4},\e_5^{1/2})$-typical} \}.
\end{align*}
%\begin{align*}
%E(G_X)= & \{ x_2 uv: \{u,v\}\subseteq X  \text {\ such that $\{x_2, u,v\}$ and $\{x_1,x_2,u\}$
%are $\e_5^{1/2}$-typical} \} \cup \\
%&  \{ x'_2 uv: \{u,v\}\subseteq X  \text {\ such that $\{x'_2, u,v\}$ and $\{x'_1,x'_2,u\}$
%are $\e_5^{1/2}$-typical} \} \cup \\
%&  \{ uvw: \{ u,v,w\} \subseteq X \text{ is $(4\e_5,\e_5^{3/4},\e_5^{1/2})$-typical} \}.
%\end{align*}
%We show that $\delta_1(G_X)>\frac{2}{3} \binom{m}{2}$. Since the pair $\{x_1,x_2\}$ is $\e_5^{3/4}$-typical,
%each of them belongs to at most $2\e_5^{1/4}n$ $\e_5^{1/2}$-atypical triples in $X$ by Claim~\ref{claim4}.
%Thus, there are at least $m-\e_5^{1/4}n$ vertices $u$ such that $\{x_1,x_2,u\}$ is
%$\e_5^{1/2}$-typical. Since $x_2$ is $4\e_5$-typical, by Claim~\ref{claim3}, the number of $\e_5^{1/2}$-atypical
%triples in $X$ containing $x_2$ is at most $(4\e_5^{1/2})n^2$. Hence,
%$d_{G_X}(x_2)\geq \binom {m- \e_5^{1/4} n} {2} -(8\e_5^{1/2})n^2 >\frac{2}{3} \binom{m}{2}$ because $m\geq cn$.
%Similarly, we obtain that $d_{G_X}(x'_2) >\frac{2}{3} \binom{m}{2}$.
We show that $\delta_1(G_X)>\frac{2}{3} \binom{m}{2}$. Since the pairs $\{x_1,x_2\}$  and $\{x_1',x_2'\}$
are $\e_5^{3/4}$-typical, each of them belongs to at most $2\e_5^{1/4}n$ $\e_5^{1/2}$-atypical triples in
$X$ (by Claim~\ref{claim4}); so there are at least $\binom{m-4\e_5^{1/4}n}{2}$ pairs of $\{u,v\}$ such that
$\{x_1,x_2,u\}, \{x_1',x_2',v\}$ are $\e_5^{1/2}$-typical. The number of $\e_5^{1/2}$-atypical triples
containing  $x_2$ or $x_2'$ is at most $2\cdot 3\cdot 4\e_5^{1/2}n^2$ (by Claim~\ref{claim3} as they are
$4\e_5$-typical). Thus, $d_{G_X}(x_2)\geq \binom{m-4\e_5^{1/4}n}{2}-24\e_5^{1/2}n^2>\frac{2}{3}\binom{m}{2}$
because $m=|X|\geq cn$. Since all vertices in $X$ are $4\e_5$-typical, by Claim~\ref{claim2} and \ref{claim3},
the number of $\e_5^{3/4}$-atypical
pairs in $X$ containing a fixed $4\e_5$-typical vertex is at most $2\cdot 4\e_5^{1/4} n$, and the number of
$\e_5^{1/2}$-atypical triples in $X$ containing a fixed $4\e_5$-typical vertex is at most
$3\cdot 4\e_5^{1/2} n^2$. Thus, for any vertex $u\in X$, we have
$d_{G_X}(u)\geq \binom{m}{2} - 16 \e_5^{1/4} m n-12\e_5^{1/2} n^2> \frac{2}{3} \binom{m}{2}$.

Since $|V(G_X)|=m+1$ and $m\geq cn$, by Lemma~\ref{3gphc}, $G_X$ has a Hamiltonian cycle. So we
can find a Hamiltonian path in $G_X$, say $P_X=x_2 x_3
\cdots x_{m+2}$, such that $\{x_1,x_2,x_3\}, \{x_{m+1},x_{m+2},x_2'\}$ and $\{x_{m+2}, x_2', x_1'\}$ are
$\e_5^{1/2}$-typical. Hence, we obtain a sequence of vertices
$$x_0 x_1x_2 x_3 x_4\cdots x_{m+2} x_{m+3} ( =x_{2}') x_{m+4} (=x_1') x_{x_1+5} ( =x_0'),$$
where $\{x_i,x_{i+1},x_{i+2}\}$
is $\e_5^{1/2}$-typical for $ 0\leq i\leq m+3$ and $X=\{x_3,x_4,\ldots,x_{m+2}\}$.
\end{proof}

Now we are ready to prove the following lemma, which implies Theorem~\ref{E}.

\begin{lemma}\label{le3}
Suppose that $\h$ contains a path $Q=a_2a_1a_0\cdots b_0b_1b_2$ of length at most $\e^3 n$ such that
\begin{itemize}
\item $\{a_0,a_1,a_2\}\in AAA$ and $\{b_0,b_1,b_2\}\in BBB$, and both are $(2\e_1,2\e_2,2\e_3)$-typical;
\item every vertex of $V(\h)\setminus V(Q)$ is $4\e_5$-typical.
\end{itemize}
Then $Q$ can be extended to a Hamiltonian path in $\h$.
\end{lemma}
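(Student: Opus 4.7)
The plan is to construct the Hamiltonian path $P$ in the form $P = L \cdot Q \cdot R$, where $L$ ends at $a_2$ and $R$ starts at $b_2$, so that the two extensions cover all of $X := V(\h) \setminus V(Q)$. The crucial observation is that any long sequence all of whose consecutive 4-tuples are typical $\h_0$-edges (type $AAAB$ or $ABBB$) must be type-periodic of period $4$ with an odd number of $A$'s per period; hence on a nearly balanced vertex set, $L$ is forced to follow the $(3A,1B)$-periodic pattern $\cdots AAAB\,AAAB\,\cdots AAAB\,a_2 a_1 a_0$ and $R$ the mirrored $b_0 b_1 b_2\,ABBB\,\cdots$, with $Q$ realising the unavoidable transition between the ``$A$-heavy'' and ``$B$-heavy'' regimes.

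First, I split $X \cap A = X_A^{(1)} \sqcup X_A^{(2)}$ and $X \cap B = X_B^{(1)} \sqcup X_B^{(2)}$ so that $|X_A^{(1)}| = 3|X_B^{(1)}| = 3p$ and $|X_B^{(2)}| = 3|X_A^{(2)}| = 3q$; since $|A|, |B| \in [(1 - O(\e_0))n, (1 + O(\e_0))n]$ and $|V(Q)| \leq \e^3 n$, this system is solvable with $p, q \approx n/4$, and any small divisibility defect is absorbed by relocating a constant number of vertices from $X$ into $Q$ via Claim~\ref{triconn}. For the left extension $L$, I reserve a $(4\e_5, \e_5^{3/4}, \e_5^{1/2})$-typical $AAA$ triple $\{\alpha_0, \alpha_1, \alpha_2\} \subseteq X_A^{(1)}$ (existence by Corollary~\ref{coro1}, since only $O(\e n^3)$ triples overall fail typicality) and apply Lemma~\ref{seq} with $X := X_A^{(1)} \setminus \{\alpha_0, \alpha_1, \alpha_2\}$, starting triple $\{a_0, a_1, a_2\}$ and ending triple $\{\alpha_0, \alpha_1, \alpha_2\}$. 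Relabelling the output in the Hamiltonian-path direction as $A_0 A_1 \cdots A_{3p+2}$ (with $A_{3p} A_{3p+1} A_{3p+2} = a_2 a_1 a_0$), every consecutive triple $\{A_i, A_{i+1}, A_{i+2}\}$ is a typical $AAA$ triple; this is the ``$A$-skeleton'' of $L$.

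Next, I insert one vertex $B_k \in X_B^{(1)}$ between $A_{3k-1}$ and $A_{3k}$ for each $k = 1, \ldots, p$. The four $AAAB$ 4-tuples containing $B_k$ have their $AAA$ parts equal to four consecutive typical triples $T_{k,1}, \ldots, T_{k,4}$ from the skeleton, so by typicality $|\bigcap_{j=1}^4 N_\h(T_{k,j}) \cap B| \geq (1 - 4\e_5^{1/2})n$, and the valid pool for $B_k$ inside $X_B^{(1)}$ has size at least $p - 4 \e_5^{1/2} n$. Assigning the $B_k$'s distinctly is then a perfect-matching problem in a bipartite graph on $\{1, \ldots, p\} \cup X_B^{(1)}$ with at most $4 \e_5^{1/2} np$ non-edges; provided $\e$ is small enough that $4\e_5^{1/2}n$ is much less than $p$, Hall's condition is settled by a non-edge-counting argument ruling out any potential violator $(S,T)$ with $|S|\cdot|T| \leq 4\e_5^{1/2}np$ yet $|S|+|T| \geq p+1$. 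The right extension $R$ is built by the mirror procedure on $X_B^{(2)}$ and $X_A^{(2)}$ with the $ABBB$ pattern. Concatenating $L \cdot Q \cdot R$ then yields the Hamiltonian path: 4-tuples inside $L$ and $R$ are edges by the matching, those inside $Q$ are edges by hypothesis, and the boundary 4-tuples at the $L/Q$ and $Q/R$ joins (such as $\{B_p, a_2, a_1, a_0\}$) are already among those verified by the matching. The main --- essentially the only --- obstacle is the matching step for the minority-type insertion, where keeping the non-edge count controlled is precisely where the hierarchy $\e_0 \ll \e_1 \ll \e_2 \ll \e_3 \ll \e_5 \ll 1$ is needed.
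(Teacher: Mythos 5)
Your plan mirrors the paper's proof in its architecture: both use Lemma~\ref{seq} to string the typical vertices of one colour into a skeleton of consecutively typical triples, then set up an auxiliary bipartite ``insertion'' graph between skeleton slots and the other colour class, and finally split the work into an $A$-heavy extension at one end of $Q$ and a $B$-heavy extension at the other. Your observation that such extensions are forced into the $(3{:}1)$-periodic $AAAB$/$ABBB$ pattern, and that the balanced counts force the numbers $p,q \approx n/4$, is exactly the right starting point and agrees with the $t=\lceil(3m_1-m_2)/8\rceil$ bookkeeping in the paper.

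The gap is in the matching step. You want a perfect matching between the $p$ insertion slots and the set $X_B^{(1)}$ of \emph{exactly} $p$ vertices of $B$, and you argue Hall via a non-edge count: at most $4\e_5^{1/2}np$ non-edges, while a violating pair $(S,T)$ satisfies $|S|+|T|\ge p+1$. This is not a contradiction. Take $|S|=p$ and $|T|=1$: then $|S|\cdot|T|=p\le 4\e_5^{1/2}np$ is comfortably satisfied once $n$ is large, so the count does not rule out a single ``bad'' vertex $b\in X_B^{(1)}$ whose degree in the bipartite graph is $0$. And such a $b$ genuinely can exist: $4\e_5$-typicality of $b$ only bounds the total number of $AAA$ triples with $b\notin N(T)$ by roughly $\e_5 n^3$, but the skeleton contributes only $\Theta(n)$ triples, so nothing prevents the skeleton from landing inside $b$'s bad set. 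The underlying structural problem is that your construction has no slack: $|X_B^{(1)}|$ equals the number of slots exactly, so you cannot discard a bad insert-candidate. The paper deliberately avoids this: it chooses the pool of $B$-insert candidates ($B''$ of size $p_2=\lceil 0.3m_1\rceil$) to be strictly larger than the number of slots ($t\approx n/4$), identifies the large set $B_{big}$ of high-degree candidates by an averaging argument, matches using a $t$-subset $\overline B\subseteq B_{big}$ via Dirac, and \emph{re-skeletonises} the leftover $B''\setminus\overline B$ into the $B$-sequence used by $P_{zig}$. Symmetrically, the low-degree $A$-insert candidates $A_{small}$ are detected first and shunted into the $A$-skeleton of $P_{top}$ (where being a skeleton vertex only requires $4\e_5$-typicality, which they have), so that $P_{zig}$'s insert side $\overline A$ consists entirely of high-degree vertices. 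Your proof would need a comparable mechanism: identify both bad insert-$B$'s and bad insert-$A$'s relative to the respective skeletons, route them into the opposite skeleton, and rebalance the $p,q$ counts accordingly; the ``relocate a constant number of vertices into $Q$'' device fixes only the divisibility residue mod~$8$, not a linear-in-$n$ population of low-degree candidates. Until that rerouting is done, the matching step does not go through.
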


\begin{proof}
We use typical edges to
connect all remaining vertices in $\h$. Note that all vertices in $V(\h)\setminus V(Q)$ are $4\e_5$-typical,
but with respect to a (possibly) slightly modified partition, still denoted by $(A,B)$,
in which the two sides may differ in size by at most $10\e_0 n$.
Let $A'=A\setminus V(Q)$ and $B'=B\setminus V(Q)$ and let $m_1=|A'|$ and $m_2=|B'|$.
Without loss of generality, suppose $m_1\leq m_2$.
Then $m_1\geq n-5\e_0 n-\e^3 n\geq n-2\e^3 n$ and $m_2-m_1 \leq 2\e^3 n$.

First,  we label the vertices in $B'$. Since all vertices in $B'$ are $4\e_5$-typical and
$|B'|=m_2\geq n-2\e^3 n$, there exists a $(4\e_5,\e_5^{3/4},\e_5^{1/2})$-typical triple
$\{b_{m_2},b_{m_2+1},b_{m_2+2}\}$ such that  $b_{m_2},b_{m_2+1},b_{m_2+2}\in B'$. Applying
Lemma~\ref{seq} to $\{b_0,b_1,b_2\}$, $\{b_{m_2},b_{m_2+1},b_{m_2+2}\}$ and
$B'\setminus \{b_{m_2},b_{m_2+1},b_{m_2+2}\}$, we have a sequence, denoted by $P_B=b_0 b_1 \cdots b_{m_2+2}$
such that $\{b_i,b_{i+1},b_{i+2}\}$ is $\e_5^{1/2}$-typical for $0\leq i \leq m_2$ and $B'=\{b_3,b_4,\ldots,b_{m_2+2}\}$.
Construct an auxiliary bipartite graph $\Gamma_B$ between $A'$ and $B_1$,
where $B_1=\{b_2,b_5,\ldots,b_{3p_1-1}\}$ with $p_1=\lfloor\frac{m_2+3}{3}\rfloor$
such that for any  $a\in A'$ and $b_i\in B_1$, $ab_i\in E(\Gamma_B)$ if and only if $$a\in N(b_{i-2},b_{i-1},b_{i})\cap
N(b_{i-1},b_{i},b_{i+1})\cap N(b_{i},b_{i+1},b_{i+2})\cap N(b_{i+1},b_{i+2},b_{i+3}).$$

Observe that if we find a matching in $\Gamma_B$, then we find a path  in $\h$ similar to paths in $\h_0$.
Since $\{b_i,b_{i+1},b_{i+2}\}$ $(0\leq i \leq m_2) $ is typical, $d_{\Gamma_B}(b)\geq 0.99m_1$ for all
$b\in B'$; so at least $0.9m_1$ vertices $a\in A'$ have degree $d_{\Gamma_B}(a)\geq 0.9p_1$.
We need to deal with the vertices in $A'$ of small degree since vertices of larger degree can be included
in a matching of $\Gamma_B$.
Let $A_{big}=\{a\in A':d_{\Gamma_B}(a)\geq 0.9p_1\}$; so $|A_{big}|\geq 0.9m_1$. Let
$A_{small}=A'\backslash A_{big}$. We claim the following.

\begin{itemize}
\item[(1)] There exists a path $P_{top}$ in $\h$ such that $A_{small}\subseteq V(P_{top})
\cap A \subseteq \{a_0,a_1,a_2\}\cup A', V(P_{top})\cap B\subseteq B'$ and one end triple of $P_{top}$ is $a_0a_1a_2$.
%the set of previous $3t$ vertices of $P_A$ (|A_{small}|\leq 0.1 m_1\leq 2t)$.
\end{itemize}
Let $t:=\lceil\frac{3m_1-m_2}{8}\rceil$; then $\frac{3m_1-m_2-3}{8}\leq t \leq
\frac{3m_1-m_2+9}{8}$. Since $(1-2\e^3)n \leq m_1\leq m_2 \leq (1+\e^3) n$, we have
$0.24n\leq t \leq 0.25n$. Let $A_S$ be a subset of $A'$ such that $A_{small}\subseteq
A_S$ and $|A_S|=3t-3\geq 0.7n$. Note that $|A_{small}|\leq 0.1 m_1 \leq 3t-3$, so we can find such $A_S$. Since $|A'\setminus A_S|=m_1-(3t-3)\geq 0.2n$, there exist $a_{3t},a_{3t+1},a_{3t+2}\in A'\setminus A_S$ such that $\{a_{3t},a_{3t+1},a_{3t+2}\}$ is $(4\e_5,\e_5^{3/4},\e_5^{1/2})$-typical. We apply Lemma~\ref{seq} to $\{a_0,a_1,a_2\}$, $\{a_{3t},a_{3t+1},a_{3t+2}\}$ and $A_S$. Then there exists a sequence of vertices $a_0a_1a_2a_3\ldots a_{3t-1}a_{3t}a_{3t+1}a_{3t+2}$, such that $\{a_i,a_{i+1},a_{i+2}\}$ is $\e_5^{1/2}$-typical and $A_{small}\subseteq A_S=\{a_3,a_4,\ldots, a_{3t-1}\}$.

\begin{comment}
%We want to put some vertices in
%$(A \setminus V(Q))\cup \{a_0, a_1, a_2\}$ in order $a_0, a_1, a_2, a_3, \ldots, a_{3t+2},$
%such that $\{a_i, a_{i+1}, a_{i+2}\}$ is $\e_5^{1/2}$-typical for any $0\leq i\leq 3t$,
%and $A_{small}$ is included in $\{a_3, a_4, \dots, a_{3t-1}\}$. Note that $|A_{small}|\leq 0.1 m_1\leq 3t-3$.
Consider a 3-graph $G_{A_S}$ with vertex set $A_S\cup \{a_2\}$ and edge set
\begin{align*}
E(G_{A_S})=& \{a_2xy :\{x,y\}\subseteq {A_S}\text{ such that $\{a_2,x,y\}$ and
$\{a_1,a_2,x\}$ are $\e_5^{1/2}$-typical} \}\\
& \cup \{uvw:\{u,v,w\}\subseteq {A_S}  \text{ is $(4\e_5, \e_5^{3/4}, \e_5^{1/2})$-typical}\}
\end{align*}
Similarly, we can show that the minimum vertex degree of $G_{A_S}$ is at least $\frac{2}{3}
\binom{|A_S|}{2}$. By Lemma~\ref{3gphc}, $G_{A_S}$  has a Hamiltonian path, denoted by
$P_{A_S}=a_2a_3\ldots a_{3t-1}$, where $\{a_1,a_2,a_3\}$ is $\e_5^{1/2}$-typical and
$A_{small}\subseteq V(P_{A_S})$. Since the pair $\{a_{3t-2},a_{3t-1}\}$ is $\e_5^{3/4}$-typical,
there exist $a_{3t},a_{3t+1},a_{3t+2}\in A'\setminus A_S$ such that $\{a_{3t-2},a_{3t-1},a_{3t}\},
\{a_{3t-1},a_{3t},a_{3t+1}\}$ and $\{a_{3t},a_{3t+1},a_{3t+2}\}$ are $\e_5^{1/2}$-typical triples.
\end{comment}
%Recall that $t=\lceil\frac{3m_1-m_2}{8}\rceil$,
Construct another auxiliary bipartite graph $\Gamma_A$ between $A_2$ and $B''$, where
$A_2=\{a_2,a_5,\ldots,$ $a_{3t-1}\}$ and
$B''$ is the set of the last $p_2=\lceil 0.3 m_1\rceil$ vertices of $P_B$, i.e., $B''=\{b_{m_2+2},b_{m_2+1},
\ldots,$ $b_{m_2+3-p_2}\}$. For any $a_i\in A_2$ and $b\in B''$, $a_ib\in E(\Gamma_A )$ if and only if
$$ b\in N(a_{i-2},
a_{i-1},a_{i})\cap N(a_{i-1},a_{i},a_{i+1})\cap N(a_{i},a_{i+1},a_{i+2})\cap N(a_{i+1},a_{i+2},a_{i+3}).$$

Again, since $\{a_i,a_{i+1},a_{i+2}\} (0\leq i \leq t)$ is $\e_5^{1/2}$-typical, $d_{\Gamma_A}(a)\geq
0.99p_2$ for all $a\in A_2$; so at least $0.9p_2$ vertices $b\in B''$ have degree $d_{\Gamma_A}(b)\geq 0.9t$.
Let $B_{big}= \{ b\in B'': d_{\Gamma_A}(b)\geq 0.9 t\}$.
Then $|B_{big}|\geq 0.9 p_2>t$ (since $t \leq \frac{3m_1-m_2+9}{8}<0.26m_1<0.9\cdot\lceil 0.3 m_1 \rceil =0.9p_2$ for sufficiently large $n$).
We choose a set $\overline {B} \subseteq B_{big}$ of size $|\overline {B}|=t$. Consider the subgraph
$\Gamma_A'=\Gamma_A [A_2\cup \overline {B}]$.
Since $|A_2|=|\overline B|=t$ and all vertices in $A_2$ and $\overline B$ have degree at least $0.9t$ in $\Gamma_A'$,
there exists a perfect matching in $\Gamma_A'$ by Dirac's theorem. The perfect matching forms a path in $\h$,
denoted by $$P_{top}= a_0 a_1 a_2 \overline{b_1} a_3 a_4 a_5 \overline{ b_2} \cdots a_{3t-1} \overline{b_t}\cdots, $$
where $a_{3i-1}\overline{b_i}\in E(\Gamma_A)$. Note that
the end of $P_{top}$, possibly $a_{3t-3} a_{3t-2}a_{3t-1} \overline{b_t}$ or $a_{3t-2}a_{3t-1}\overline
{b_t}a_{3t}$ or $a_{3t-1}\overline{b_t}a_{3t}a_{3t+1}$ or $\overline {b_t}a_{3t}a_{3t+1}a_{3t+2}$, is determined
by the numbers $m_1$ and $m_2$. This proves (1).

We now show the following claim.
\begin{itemize}
\item[(2)] There exists a path $P_{zig}$ in $\h$ such that $V(P_{zig})\cap A= A'\setminus V(P_{top}),
V(P_{zig})\cap B= \{b_0,b_1,b_2\}\cup B'\setminus  V(P_{top})$, and one end triple of $P_{zig}$ is $b_0b_1b_2$.
\end{itemize}
Consider $P_{B_1}= b_0b_1b_2 \ldots b_{m_2+2-p_2}$ and $V(P_{B_1})=V(P_B)\setminus B''$.
Since all vertices in $B_2:=B''\setminus \overline B$ are $4\e_5$-typical and $|B_2|=p_2-t\geq 0.04n$, there
exist vertices $b_{m_2-t}',b_{m_2+1-t}',b_{m_2+2-t}'\in B_2$ such that $\{b_{m_2-t}',b_{m_2+1-t}',
b_{m_2+2-t}'\}$ is $(4\e_5,\e_5^{3/4},\e_5^{1/2})$-typical. We extend $P_{B_1}$ by applying Lemma~\ref{seq}
to $\{b_{m_2-p_2}, b_{m_2+1-p_2}, b_{m_2+2-p_2}\}$, $\{b_{m_2-t}'$, $b_{m_2+1-t}',b_{m_2+2-t}'\}$, and
$B_2 \setminus \{b_{m_2-t}',b_{m_2+1-t}',b_{m_2+2-t}'\}$.
%the pair $\{b_{m_2+1-p_2},b_{m_2+2-p_2}\}$ is $\e_5^{3/4}$-typical, we can use the vertices in
%$B''\setminus \overline B$ to extend $P_{B_1}$. Let $P_{B}'=b_0'
Hence, we obtain $b_0' b_1' b_2' \dots b_{m_2+2-t}'$ such that $b_j'=b_j$ for all $0\leq j\le m_2+2-p_2$,
$B_2=\{b_{m_2+3-p_2}',
\dots, b_{m_2+2-t}'\}$, and $\{b_i',b_{i+1}',b_{i+2}'\}$ is $\e_5^{1/2}$-typical for any $ 0\leq i\leq m_2-t$.

Similarly, construct an auxiliary bipartite graph $\Gamma_B'$ with partition classes $\overline {A}$, $B_3$, where
$\overline{A}=A\setminus V(Q\cup P_{top})$ and $B_3=\{b_2',b_5',\ldots,b_{3p_3-1}'\}$ with
$p_3=\lfloor\frac{m_2+3-t}{3}\rfloor$. We choose the end triple for $P_{top}$ to make $|\overline{A}|=|B_3|$.
For  any $a\in \overline{A}$ and $b_i'\in B_3$,
$ab_i'\in E(\Gamma_B')$  if and only if $$a\in N(b_{i-2}',b_{i-1}',b_{i}')\cap N(b_{i-1}',b_{i}',b_{i+1}')
\cap N(b_{i}',b_{i+1}',b_{i+2}')\cap N(b_{i+1}',b_{i+2}',b_{i+3}').$$
We know $d_{\Gamma_B'}(b)\geq 0.99 p_3$ for all $b\in B_3$. Since $A_{small}\subseteq V(P_{top})$, $\overline{A}\subseteq
A_{big}$ and $d_{\Gamma_B} (a)\geq 0.9 p_1$ for each $a\in \overline {A}$. Hence, $d_{\Gamma_B'} (a) \geq 0.9 p_1 - \lceil \frac{p_2}{3}
\rceil \geq 0.3m_2-0.1m_1\geq 0.8 p_3$. Therefore, $\delta(\Gamma_B')\geq 0.8 p_3$. By Dirac's theorem,
$\Gamma_B'$ has a perfect matching, which forms a path in $\h$, denoted by
$$P_{zig}= b_0' b_1' b_2' \overline {a_1} b_3' b_4' b_5' \overline {a_2} \cdots b_{3p_3-1}' \overline{a_{p_3}}\cdots,$$
where $\overline{a_i} b'_{3i-1}\in E(\Gamma_B')$.

\medskip

Now, $Q\cup P_{top} \cup P_{zig}$ is a Hamiltonian path in $\h$.
\end{proof}

\section{Hamiltonian cycles}
In this section, we prove the following
\begin{theorem}\label{F}
There exists $\e_0>0$ such that, for sufficiently large $n$ and any 4-graph $\h$ on $2n$ vertices with
$b(\h)<\e_0 n^4$ the following holds:
If $\delta_3(\h)\geq n-1$, then $\h$ has a Hamiltonian cycle.
\end{theorem}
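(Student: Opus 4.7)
The plan is to adapt the proof of Theorem~\ref{E} with the central change being the use of \emph{two} disjoint bridges rather than one, so that the absorbing structure has endpoints that can be closed into a cycle. Since the hypothesis $\delta_3(\mathcal{H})\geq n-1$ is unchanged, the constants $\epsilon_0,\epsilon_1,\ldots,\epsilon_5$ and the typicality definitions from Sections~2--3 carry over, as do Lemmas~\ref{le1}, \ref{le2}, \ref{seq}, and Claim~\ref{triconn}.

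First, I will apply Lemma~\ref{le1} twice to obtain two vertex-disjoint $(\epsilon_1,\epsilon_2,\epsilon_3)$-bridges $M_1,M_2$ (the second inside $\mathcal{H}-V(M_1)$; since $|V(M_i)|\le 25$, the constant modifications to \eqref{eq2} and \eqref{eq3} are absorbed). Then, adapting Lemma~\ref{le2}, I build a single path $Q$ containing $M_1$, $M_2$, and all $\epsilon_5$-medium vertices: absorb the medium $A$-vertices in $AAAB$-type segments and the medium $B$-vertices in $ABBB$-type segments, using $M_1$ and $M_2$ as the two ``turns'' that switch between the $AAAB$ and $ABBB$ regimes. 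With two bridges available, $Q$ can be arranged so that both of its endpoints are $(\epsilon_1,\epsilon_2,\epsilon_3)$-typical triples of the types needed for closure. After this, apply Claim~\ref{fact1} to transfer all $\epsilon_5$-anarchists to the opposite side of the partition so that every vertex in $V(\mathcal{H})\setminus V(Q)$ becomes $4\epsilon_5$-typical with respect to the new, slightly unbalanced partition.

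The completion step mirrors Lemma~\ref{le3}: apply Lemma~\ref{seq} to arrange the remaining vertices of $A$ and $B$ into sequences with all $3$-windows $\epsilon_5^{1/2}$-typical, and then use Dirac's theorem on the auxiliary bipartite graphs $\Gamma_B$ and $\Gamma_A$ to produce perfect matchings yielding an $AAAB$-zigzag $P_{top}$ and an $ABBB$-zigzag $P_{zig}$. The essential new ingredient is the treatment of the \emph{outer} ends of $P_{top}$ and $P_{zig}$: rather than leaving them as the two endpoints of a Hamiltonian path, I must steer them so that together with $Q$ they form a closed Hamiltonian cycle, with all three ``wrap-around'' $4$-edges lying in $E(\mathcal{H})$. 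These wrap-around edges will be secured by reserving at the outset two $(\epsilon_1,\epsilon_2,\epsilon_3)$-typical triples (one $AAV$ and one $BBV$) at the intended joining site; their typicality, together with the bound from Claim~\ref{edges} applied to the configuration of $\mathcal{H}_0$-edges near the joint, forces the three wrap-around edges into $E(\mathcal{H})$.

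The main obstacle will be the balancing book-keeping required on both halves simultaneously: the parameter $t$ from Lemma~\ref{le3}, which controls the density of $B$-vertices inserted among the $A$-vertices in $P_{top}$, must now be chosen consistently on each side of the cycle so that (i) $P_{top}\cup P_{zig}$ uses every vertex outside $V(Q)$ exactly once, (ii) the inner ends of the two zigzags meet the endpoints of $Q$, and (iii) the outer ends meet at the reserved typical triples. Fortunately, the Dirac-type minimum-degree bounds verified in Lemma~\ref{le3} for $\Gamma_A$ and $\Gamma_B$ leave a comfortable margin (roughly $0.8$ versus the required $0.5$), so both $t$-values admit consistent choices, and the small-degree vertices on each side can still be routed through by the same $A_{small}/B_{big}$ device used in Lemma~\ref{le3}. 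Subject to this bookkeeping, the remaining ingredients---Claim~\ref{triconn}, Claim~\ref{claim5}, Lemma~\ref{3gphc}, and the ``taking care of atypical vertices'' machinery---carry over from Section~3 essentially verbatim.
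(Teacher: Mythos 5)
Your high-level architecture (two bridges, a path $Q$ absorbing medium vertices, transfer of anarchists, zigzags built via auxiliary bipartite Dirac matchings) is the right skeleton and matches the paper's. But there is a genuine gap in the ``balancing book-keeping'' step, which you dismiss as a matter of comfortable Dirac-type margin. It is not: it is a discrete divisibility obstruction. A tight $4$-path built solely from $AAAB$ edges consumes $A$-vertices and $B$-vertices in the fixed ratio $3{:}1$ (up to boundary effects), and likewise for $ABBB$ segments in ratio $1{:}3$. When $P_{top}$, $P_{zig}$, $Q$, and $M_2$ are glued into a single cycle, the numbers of $A$- and $B$-vertices in each piece must simultaneously sum to $|A|$ and $|B|$; eliminating the free variables leaves a linear Diophantine condition. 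In the paper's notation this is the requirement that $m=\frac{3n_1-n_2+6}{8}$ be an integer (Lemma~\ref{le4}). The Dirac-matching slack lets you choose \emph{which} vertices occupy the slots, not \emph{how many} slots of each type there are, so no amount of degree surplus resolves a bad residue class mod~$8$. (In the Hamiltonian-path case, Theorem~\ref{E}, the two free endpoints absorb this constraint, which is why Lemma~\ref{le3} has no integrality hypothesis; closing the cycle removes that freedom.)

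The paper's fix is substantial and absent from your proposal: it introduces the notion of a path's ``difference'' $p^\ast\equiv 3|V(P)\cap A|-|V(P)\cap B| \pmod 8$, defines ``switchers'' (small paths that shift this residue), and proves via ``seeds'' (Claim~\ref{switcher}) and a three-case analysis (Lemma~\ref{le5}) that $\mathcal{H}$ always contains a ``good set'' from which two disjoint bridges with any prescribed residue sum can be extracted, forcing $m\in\mathbb{Z}$. Without some such mechanism to steer $3n_1-n_2$ into the right class mod~$8$, the construction simply cannot close. Your proposed remedy --- reserving two typical triples at the joint and invoking Claim~\ref{edges} to ``force the wrap-around edges into $E(\mathcal{H})$'' --- also does not work on its own terms: Claim~\ref{edges} bounds the total number of missing $\mathcal{H}_0$-edges, and typicality of a triple only caps how many extensions are missing; neither guarantees that the three specific wrap-around quadruples you need are present. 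Connections between typical triples must go through the probabilistic $T$-set argument of Claim~\ref{triconn}, which yields a connecting path of bounded length, not a single edge --- and that connecting path again changes the $A/B$ counts, feeding back into the same divisibility constraint you have not addressed.
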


The proof of Theorem \ref{F} proceeds along the lines of the proof of Theorem \ref{E}, except that to get a
Hamiltonian cycle we will need a second bridge. Suppose we have two disjoint bridges $M_1$ and $M_2$.
The arguments for taking care of medium and anarchist vertices are essentially the same as in the
proof of Theorem \ref{E}. So we have a path $Q$ which contains $M_1$ and all remaining medium vertices, and
is disjoint from $M_2$. Let $n_1=|A\setminus V(Q \cup M_2)|$, $n_2=|B\setminus V(Q \cup M_2)|$, and let $m=\frac{3n_1-n_2+6}{8}$.
When $m$ is an integer, we may apply the following variation of Lemma \ref{le3}, whose proof
basically repeats the proof of Lemma \ref{le3} with just minor modifications.

\begin{lemma}\label{le4}
Suppose that $\h$ contains a path $Q=a_2 a_1 a_0 \cdots b_0 b_1 b_2$ of length at most $\e^3 n$ and a
bridge $M_2=a'_2 a'_1 a'_0 \cdots b'_0 b'_1 b'_2$ such that
\begin{itemize}
\item $V(Q)\cap V(M_2)=\emptyset$;
\item $m=\frac{3n_1-n_2+6}{8} \ is \ an \ integer$;
\item all end triples $\{a_0,a_1,a_2\}$, $\{b_0,b_1,b_2\}$, $\{a'_0,a'_1,a'_2\}$ and $\{b'_0,b'_1,b'_2\}$
 are $(2\e_1,2\e_2,2\e_3)$-typical;
\item every vertex of $V(\h)\setminus V(Q\cup M_2)$ is $4\e_5$-typical.
\end{itemize}
Then $Q\cup M_2$ can be extended to a Hamiltonian cycle in $\h$. \end{lemma}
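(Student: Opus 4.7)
The plan is to mirror the construction of Lemma~\ref{le3}, but now both end triples of each zig-zag path are prescribed (one by $Q$, one by $M_2$), rather than having one free end. Write $A' = A \setminus V(Q \cup M_2)$ and $B' = B \setminus V(Q \cup M_2)$, so $|A'| = n_1$ and $|B'| = n_2$. We aim to build two paths $P_{top}$ and $P_{zig}$ covering $A' \cup B'$, where $P_{top}$ runs from $\{a_0, a_1, a_2\}$ to $\{a'_0, a'_1, a'_2\}$ along the $AAA \mid B \mid AAA \mid B \mid \cdots \mid B \mid AAA$ pattern (with all edges of type $AAAB$), and $P_{zig}$ runs from $\{b_0, b_1, b_2\}$ to $\{b'_0, b'_1, b'_2\}$ along the analogous pattern on the $B$-side. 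Concatenating $Q$, $P_{top}$, $M_2$ (reversed), and $P_{zig}$ in cyclic order then yields a Hamiltonian cycle.

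If $P_{top}$ inserts exactly $m$ interior vertices of $B'$, its pattern forces it to use exactly $3m - 3$ vertices of $A'$; similarly $P_{zig}$ uses $3s - 3$ vertices of $B'$ and $s$ vertices of $A'$. Requiring that $A' \cup B'$ be partitioned between the two paths yields
\[
3m - 3 + s = n_1 \qquad \text{and} \qquad m + 3s - 3 = n_2,
\]
whose unique solution is $m = (3n_1 - n_2 + 6)/8$ and $s = n_1 - 3m + 3$. This is exactly the quantity in the hypothesis, and its integrality is precisely what replaces the $\lceil \cdot \rceil$ slack used in Lemma~\ref{le3}, where the free end of the Hamiltonian path permitted up to three extra $A$-vertices at the tail of $P_{top}$.

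With $m$ and $s$ determined, run the construction of Lemma~\ref{le3} symmetrically on both sides. First set up auxiliary bipartite graphs $\Gamma_B$ (between $A'$ and candidate $B$-insertion positions along a typical ordering of $B'$) and $\Gamma_A$ (between $B'$ and candidate $A$-insertion positions along a typical ordering of $A'$); identify the small-degree subsets $A_{small} \subseteq A'$ and $B_{small} \subseteq B'$, each of size $O(\e_0 n)$ thanks to the $4\e_5$-typicality of all vertices outside $V(Q \cup M_2)$. Apply Lemma~\ref{seq} twice: once to $\{a_0, a_1, a_2\}$, $\{a'_2, a'_1, a'_0\}$, and a set $A_S \subseteq A'$ with $A_{small} \subseteq A_S$ and $|A_S| = 3m - 3$, producing a sequence $a_0 a_1 a_2 a_3 \cdots a_{3m+2}$ with $\{a_{3m}, a_{3m+1}, a_{3m+2}\} = \{a'_2, a'_1, a'_0\}$ whose every consecutive triple is $\e_5^{1/2}$-typical; and once analogously on the $B$-side, producing $b_0 b_1 \cdots b_{3s+2}$ ending in $\{b'_2, b'_1, b'_0\}$. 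The hypothesis that the four end triples are $(2\e_1, 2\e_2, 2\e_3)$-typical, together with the $4\e_5$-typicality of the remaining vertices, is exactly what Lemma~\ref{seq} requires.

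Finally, in each of the trimmed auxiliary bipartite graphs, the large-degree side has minimum degree at least $0.8$ times the part size, so Dirac's theorem produces perfect matchings corresponding to the required insertions of $B$-vertices into the $A$-sequence (forming $P_{top}$) and of $A$-vertices into the $B$-sequence (forming $P_{zig}$). The main obstacle is purely bookkeeping: one must choose $A_S$ and $B_S$ so that $P_{top}$ and $P_{zig}$ use disjoint vertex sets in $A'$ and $B'$, and in particular the $B$-vertices inserted into $P_{top}$ must be disjoint from those forming the $B$-sequence spine of $P_{zig}$. Because $|A_{small}|, |B_{small}| = O(\e_0 n)$ while $n_1, n_2 = \Theta(n)$, there is ample room to coordinate the two constructions, and all the degree estimates from Lemma~\ref{le3} transfer essentially verbatim to the present two-sided setting.
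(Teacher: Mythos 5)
Your approach captures the paper's strategy: compute the forced sizes $m$ and $s$ of the two zig-zag paths, note that integrality of $m = (3n_1 - n_2 + 6)/8$ is exactly what replaces the rounding slack available in Lemma~\ref{le3}, invoke Lemma~\ref{seq} to produce typical spine sequences hitting the prescribed end triples, and extract the $AAAB$/$ABBB$ insertions via Dirac's theorem in auxiliary bipartite graphs. That is indeed how the paper argues, and the arithmetic $3m - 3 + s = n_1$, $m + 3s - 3 = n_2$ is correct.

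The one place where your outline diverges from the paper in a way that matters is the order of construction, and this is not quite the ``pure bookkeeping'' you describe. You propose to run Lemma~\ref{seq} symmetrically, building an $A$-spine on a set $A_S \supseteq A_{small}$ of size $3m-3$ and a $B$-spine on a set $B_S \supseteq B_{small}$ of size $3s-3$, and then to insert the complements. But $A_{small}$ is defined via low degree in $\Gamma_B$, which depends on the $B$-spine, while $B_{small}$ is defined via low degree in $\Gamma_A$, which depends on the $A$-spine --- so the two choices are mutually dependent. The paper breaks this circularity with an asymmetric construction: it first runs Lemma~\ref{seq} on \emph{all} of $B'$ to fix one global $B$-ordering, defines $\Gamma_B$ and hence $A_{small}$ relative to that, builds the $A$-spine on $A_S$, matches its positions into the tail of the $B$-ordering, and then only \emph{re-sequences} the leftover tail vertices $B_2 = B'' \setminus \overline B$ back into the surviving front of the original $B$-ordering (a third, small application of Lemma~\ref{seq}). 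Because $\overline A = A \setminus V(Q \cup P_{top})$ lands entirely in $A_{big}$, the degree estimate $d_{\Gamma_B'}(a) \geq 0.9 p_1 - \lceil p_2/3 \rceil \geq 0.8 p_3$ goes through without ever needing a $B_{small}$. Your symmetric version can probably be repaired (e.g.\ by defining $A_{small}$ and $B_{small}$ with respect to preliminary orderings of all of $A'$ and $B'$ and then checking that the degree bounds survive the pass to the actual spines), but as written it glosses over a genuine dependency that the paper's proof is structured specifically to avoid; the final degree estimates for the restricted bipartite graphs also need to be checked explicitly rather than asserted to ``transfer verbatim.''
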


\begin{proof}
Let $A'=A\setminus V(Q\cup M_2)$ and $B'=B\setminus V(Q\cup M_2)$. It follows that $|A'|=n_1, |B'|=n_2$.
We build a top path $P_{top}$ with $V(P_{top})\cap A\subseteq \{a_0,a_1,a_2,a_2',a_1',a_0'\}\cup A'$
and $V(P_{top})\cap B\subseteq B'$ such that $|V(P_{top})\cap A|=3(m+1), |V(P_{top})\cap B|=m$, and $P_{top}$
connects two $AAA$ end triples of $Q$ and $M_2$, i.e., $a_0a_1a_2$ and $a_2' a_1'a_0'$.  Then, we use $P_{zig}$
with $V(P_{zig})\cap A=A'\setminus V(P_{top})$ and $V(P_{zig})\cap B=\{b_0,b_1,b_2,b_2',b_1',b_0'\}\cup B'
\setminus V(P_{top})$ to connect $b_0b_1b_2$ with $b_2'b_1'b_0'$.
%Let $n'_1=|A\setminus V(Q \cup M_2\cup P_{top})|=n_1-3(m-1)=m$ and $n'_2=|B\setminus V(Q \cup M_2\cup
%P_{top})|=n_2-m=3(m-1)$. Then, we use $P_{zig}$ with
Note that $|V(P_{zig})\cap A|=n_1-(3(m+1)-6)=m$ and $|V(P_{zig})\cap B|=6+n_2-m=3(m+1)$.
%to connect $b_0b_1b_2$ and $b_2' b_1' b_0'$.
Therefore, $Q\cup M_2 \cup P_{top} \cup P_{zig}$ is a Hamiltonian cycle in $\h$.

To construct $P_{top}$, we apply Lemma~\ref{seq} to $\{b_0,b_1,b_2\}, \{b_2',b_1',b_0'\}$,
and $B'$. We obtain a sequence of vertices $b_0b_1 b_2 b_3 \cdots b_{n_2+2} b_{n_2+3}
(=b_{2}') b_{n_2+4} ( =b_1') b_{n_2+5} ( =b_0')$, where $\{b_i,b_{i+1},b_{i+2}\}$
is $(4\e_5,\e_5^{3/4},\e_5^{1/2})$-typical for $ 0\leq i\leq n_2+3$ and $B'=\{b_3,b_4,\ldots,b_{n_2+2}\}$.
Consider a bipartite graph $\Gamma_B$ between $ A'$ and $B_1$, where $B_1=\{b_2, b_5, \dots, b_{3p_1-1}\}$
with $p_1=\lfloor \frac{n_2+3}{3}\rfloor$. For any $a\in A'$ and $b_i\in B_1$, $ab_i\in E(\Gamma_B)$ if and only if
$$a\in N(b_{i-2},b_{i-1},b_{i})\cap N(b_{i-1},b_{i},b_{i+1})\cap N(b_{i},b_{i+1},b_{i+2})\cap
N(b_{i+1},b_{i+2},b_{i+3}).$$
By the typicality of all triples, $d_{\Gamma_B}(b)\geq 0.99 n_1$ for any $b\in B_1$, and there are a lot of
vertices in $A'$ having large degree in $\Gamma_B$. We partition $A'=A_{big} \cup A_{small}$, where
$A_{big}=\{a\in A': d_{\Gamma_B} (a)\geq 0.9 p_1\}$. Then $|A_{small}|\leq 0.1n_1<3(m-1)$.
By Lemma~\ref{seq} again, there exists a sequence of vertices $ a_0 a_1 a_2 a_3
\dots a_{3m-2} a_{3m-1} a_{3m} (=a_2') a_{3m+1} (=a_1') a_{3m+2} (=a_0')$, such that$\{a_i, a_{i+1},
a_{i+2}\}$ is $\e_5^{1/2}$-typical for $0\le i\le 3m$ and $A_{small}\subseteq
\{a_3, a_4, \dots, a_{3m-1}\}$.

Consider a bipartite graph $\Gamma_A$ with partition classes $A_2, B''$, where $A_2=\{a_2, a_5, \dots,
a_{3m-1}\}$ and $B''$ is the set
containing the last $p_2=\lceil 0.3 n_1\rceil$ vertices of $B'$, i.e., $B''=\{b_{n_2+2},
b_{n_2+1}, \dots,$ $b_{n_2+3-p_2}\}$.  Then we can find a perfect matching in $\Gamma_A[A_2, \overline{B}]$,
where $\overline{B}$ is a subset of $B''$ and $|\overline{B}|=|A_2|$. Therefore, $$P_{top}=a_0 a_1 a_2
\overline{b_1} a_3 a_4 a_5 \overline {b_2} a_6 \cdots a_{3m-1} \overline{b_m} a_{3m} (=a_2')
a_{3m+1} (=a_1') a_{3m+2} (=a_0').$$
For the remaining vertices, we use a similar step in the proof of Lemma \ref{le3} to find $P_{zig}$,
and $Q \cup M_2\cup P_{top} \cup P_{zig}$
is a Hamiltonian cycle in $\h$.
\end{proof}

Since Lemma~\ref{le4} has some requirements on the number of vertices in $A\setminus V(Q \cup M_2)$ and
$B\setminus V(Q \cup M_2)$, the above proof works only if $m\in \mathbb{N}$. When $m$ is not an integer,
we use a ${\it good \ set}$ defined belows.
Here, we will consider the order of sets in $V_1V_2V_3$. For example, $AAB$ and $ABA$ are different.

\begin{definition}
For a 4-graph $\h$, let $A$, $B$ be a partition of $V(\h)$.
\begin{itemize}
\item The difference of a path $P$ in $\h$ with respect to $A, B$ is the number $p^*\equiv 3|V(P)\cap A|
-|V(P)\cap B|$ (mod 8).
\item An $(\e_1,\e_2,\e_3)$-switcher is a path $S$, which contains no $\e_5$-anarchists, has two
$(\e_1,\e_2,\e_3)$-typical end triples type of $BAA$ and $AAA$ or type $ABB$ and $BBB$, and has nonzero difference.

\item A set $X$ of vertices in $V(\h)$ is called good if $|X|<1600$ and $X$ does not contain any
$\e_5$-anarchists of $\h$, and, for any number $a\in \{0,1,2,\dots, 7\}$, there exists two disjoint bridges
$M_1$ and $M_2$ such that $V(M_1), V(M_2) \subseteq X$ and $m_1^*+m_2^*\equiv a$ (mod 8) where $m_i^*$ is
the difference of $M_i$ for $i=1,2$.
\end{itemize}
\end{definition}

{\bf Note.}
Given two disjoint $(\e_1,\e_2,\e_3)$-bridges $R_1$ and $R_2$, let $r_i^*$ be the difference of $R_i$ for
$i=1,2$. By Claim~\ref{triconn}, we can connected any two $(\e_1,\e_2,\e_3)$-bridges to obtain a path with both end triples in $AAA$ or in $BBB$. %any two $(\e_1,\e_2,\e_3)$-bridges can be connected by some $\e_5$-typical vertices.
For example, the $BBB$ triples of the two bridges can be connected by two vertices in $A$ and three vertices in $B$ (these vertices are from the vertex set $T$ by Claim~\ref{triconn}), and we have a path with both end triples in $AAA$. Adding some vertex from $B$ to one end of this path to make it have an $(\e_1,\e_2,\e_3)$-typical end triple of type $BAA$, we can obtain a path $P$ with difference $r_1^*+r_2^*+2$ and both end triples are $(\e_1,\e_2,\e_3)$-typical. Therefore, if
$r_1^*+r_2^*\not\equiv 6 \ (mod \ 8)$, $R_1$ and $R_2$ can form an $(\e_1,\e_2,\e_3)$-switcher with difference $r_1^*+r_2^*+2$.

\medskip

If there exists a good set $X$ in $\h$, then firstly, make a small modification of the partition of $V(\h)$ by
transferring all $\e_5$-anarchists to the other side, denoted by $(A',B')$.
Since $X$ is good, we can find two disjoint bridges $M_1$ and $M_2$ in $X$ to make $\frac{3 n_1'-n_2'+6}{8}$ an integer,
where $n_1'=|A'\setminus V(M_1\cup M_2)|$ and $n_2'=|B'\setminus V(M_1\cup M_2)|$ (We can do it since $X$ does not contain any $\e_5$-anarchists of $\h$). Next, by the proof of Lemma~\ref{le2}, there is a path $Q_{top}$
connecting all $\e_5$-medium vertices in $A'\setminus V(M_1\cup M_2)$ and a path $Q_{zig}$ connecting all $\e_5$-medium
vertices in $B'\setminus V(M_1\cup M_2)$. By Claim~\ref{triconn},
using $M_1$ connects $Q_{top}$ and $Q_{zig}$ to get a path $Q$ whose end triples are $AAA$ and $BBB$,
and both of them are $(\e_1,\e_2,\e_3)$-typical. It can be checked that $\frac{3 n_1-n_2+6}{8}$
is an integer when $\frac{3 n_1'-n_2'+6}{8}$ an integer, where $n_1=|A'\setminus V(Q\cup M_2)|$ and $n_2=|B'\setminus V(Q\cup M_2)|.$ (This is because $Q-M_1$ contains two paths $Q_1$ and $Q_2$ such that all edges of $Q_1$ are $AAAB$ edges and $Q_1$ contains $3x$ vertices in $A'$ and $x$ vertices in $B'$ while $Q_2$ of which all edges are $ABBB$ edges contains $3y$ vertices in $B'$ and $y$ vertices in $A'$ for some integers $x$ and $y$.)
Finally, by Lemma~\ref{le4}, we can find a Hamiltonian cycle in $\h$.

So the key is to prove the following lemma.

\begin{lemma}\label{le5}
For any 4-graph $\h$, let $A$, $B$ be a partition of $V(\h)$ and assume that \eqref{eq1},
\eqref{eq2}, \eqref{eq3} hold. Then $\h$ contains a good set $X$.
\end{lemma}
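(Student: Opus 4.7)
The plan is to construct a good set $X$ by explicitly building, for each residue $a\in\{0,1,\ldots,7\}$, a pair of disjoint $(\e_1,\e_2,\e_3)$-bridges $M_1^{(a)}, M_2^{(a)}$ in $V(\h)$ that avoid all $\e_5$-anarchists and satisfy $m_1^{(a)*}+m_2^{(a)*}\equiv a\pmod 8$, and then defining $X$ to be the union of all these vertex sets.

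First I would observe that Lemma~\ref{le1}'s construction has enough freedom to produce bridges of any prescribed difference modulo $8$. The construction depends on: (i) the length of the $AAV$-connecting path $P_1$, (ii) the length of the $BBV$-connecting path $P_2$, (iii) whether the intermediate vertices $x$ and $y$ in the middle transition $x a_2 a_1 b_1 b_2 y$ lie in $A$ or $B$, and (iv) the choice of initial typical pairs $\{a_1,a_2\},\{b_1,b_2\}$ and end triples $\{a_1',a_2',a_3'\},\{b_1',b_2',b_3'\}$. Varying these parameters changes the $A$- and $B$-vertex counts of the resulting bridge and hence its difference modulo $8$. A direct but finite case analysis shows that the set of achievable differences mod $8$ exhausts $\{0,1,\ldots,7\}$: for instance, varying $|P_1|$ within the range $[3,12]$ allowed by Claim~\ref{triconn} gives shifts in $\{0,1,2,3\}\pmod 8$ (since length-$4$ blocks contribute $3\cdot 3-1=8\equiv 0$), while toggling $y$ between $A$ and $B$ yields an additional shift of $\pm 4\pmod 8$, and together these cover all eight residues.

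For each $a$, I would build $M_1^{(a)}$ using a standard application of Lemma~\ref{le1}, obtaining some difference $m_1^{(a)*}$, and then build $M_2^{(a)}$ inside $V(\h)\setminus V(M_1^{(a)})$ with parameters tuned so that $m_2^{(a)*}\equiv a-m_1^{(a)*}\pmod 8$. Since $\delta_3(\h)\ge n-1$ and Corollary~\ref{coro1} provides plenty of $(\e_1,\e_2,\e_3)$-typical triples, each application of Lemma~\ref{le1} succeeds while avoiding previously-used vertices and the $\e_5$-anarchists (at most $10\e_0 n$ in total by Claim~\ref{claim1}). Each bridge uses at most about $31$ vertices, so the total vertex usage across the $16$ bridges is at most $16\cdot 31<500$, well below the $1600$ bound on $|X|$. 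By construction, $X$ contains no anarchists and satisfies the required good-set property.

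The main obstacle is the case analysis verifying that every residue modulo $8$ is genuinely realized by some parameter combination in Lemma~\ref{le1}'s construction. This requires careful accounting of shared vertices between $P_1$, the middle transition, and $P_2$ (since changing the length of $P_1$ may also change the type of the end triple $\{x,a_1,a_2\}$, forcing a coordinated change in $x$), together with repeated applications of Claim~\ref{triconn} to guarantee connecting paths of each desired length. A subsidiary check is that each chosen parameter combination is feasible inside $V(\h)$, namely that the relevant triple neighborhoods contain enough vertices of the required type; this follows from the co-degree hypothesis $\delta_3(\h)\ge n-1$ and the typicality estimates in Claims~\ref{claim2}--\ref{claim4}.
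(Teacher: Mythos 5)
The proposal has a genuine gap at its heart: the unsubstantiated claim that ``a direct but finite case analysis shows that the set of achievable differences mod~$8$ exhausts $\{0,1,\ldots,7\}$.'' That claim is in fact \emph{false} as stated, because of a rigidity you have overlooked. The connecting paths $P_1$ and $P_2$ that Claim~\ref{triconn} produces between two $AAV$ (resp.\ $BBV$) typical triples consist \emph{entirely} of $AAAB$ (resp.\ $ABBB$) edges. In a tight path all of whose edges are $AAAB$, every window of four consecutive vertices contains exactly one $B$-vertex; hence the $B$-vertices occur periodically with period~$4$, and once the types of the two end triples are fixed the length of the path is forced mod~$4$ and the contribution $3|V\cap A|-|V\cap B|$ of that segment is forced mod~$8$. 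So ``varying $|P_1|$ within $[3,12]$'' does \emph{not} yield shifts in $\{0,1,2,3\}$; it yields the \emph{same} residue every time, because you can only add or remove full period-$4$ blocks. This is precisely why the paper does not take this route.

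The paper's actual mechanism for moving between residues is structurally different and is exactly what your proposal is missing: it uses \emph{atypical} $AABB$ edges, packaged as \emph{seeds}, and shows (Claim~\ref{switcher}) that two disjoint seeds yield a \emph{switcher}, i.e.\ a path of odd difference that can be spliced into a bridge to change parity. The proof of Lemma~\ref{le5} then splits into cases according to whether such atypical material is available: when all vertices on one side are typical (Cases 1 and 2), it harvests fourteen disjoint seeds, produces seven disjoint odd-difference switchers, and shows any residue is reachable by choosing which switchers to attach. The genuinely hard Case 3 --- medium vertices present, no anarchists, and \emph{not} enough disjoint seeds --- is handled by a pigeonhole argument over a large family $T$ of ``non-edges'' to find two special vertices $u,v$, each of which is then shown to lie in bridges of differences $0,3,6,7$, and pairs are chosen from $\{0,3,6,7\}+\{0,3,6,7\}$ to hit every residue mod~$8$. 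Note that even there the authors only realize four differences per bridge family, not all eight, which underscores that your stronger claim (``every residue is realized by a single bridge via Lemma~\ref{le1}'') is not what is, or needs to be, proved. Your proposal would need to supply the seed/switcher machinery (or an equivalent source of odd-difference detours built from atypical edges) and an argument for the seed-poor regime; as written it does neither.
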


Next, we introduce a special type of edges, called {\it seed}. It is used to find a good set $X$.
\begin{definition}
A quadruple of vertices $(a,a',b,w)$ is called a seed if
\begin{itemize}
\item $a a' b w\in E(\h)$,
\item $\{a,a',b\}$ is $(\e_1,\e_2,\e_3)$-typical, and
\item $w\in B$ is $\e_5$-typical.
\end{itemize}
Similarly, a quadruple of vertices $(b,b',a,w)$ is called a seed if
\begin{itemize}
\item $b b' a w\in E(\h)$,
\item $\{b,b',a\}$ is $(\e_1,\e_2,\e_3)$-typical, and
\item $w\in A$ is $\e_5$-typical.
\end{itemize}
\end{definition}

\begin{claim}\label{switcher} Let $K\subseteq V(\h)$ with $|K|\le \e n$.
Given two disjoint seeds not intersecting with $K$,  $(a_i,a'_i,b_i,w_i)$, $i=1,2$, we can build a $K$-avoiding $(\e_1,\e_2,\e_3)$-switcher
of odd difference and at most $100$ vertices. Analogically, two disjoint seeds not intersecting with $K$,  $(b_i,b'_i,a_i,w_i)$, $i=1,2$, give a $K$-avoiding $(\e_1,\e_2,\e_3)$-switcher of odd difference and at most  $100$ vertices.
\end{claim}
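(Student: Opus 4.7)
The plan is to build two disjoint $(\e_1,\e_2,\e_3)$-bridges $M_1$ and $M_2$, each incorporating one of the given seeds, and then combine them into the desired switcher via the construction described in the Note preceding the claim. For each seed $(a_i,a'_i,b_i,w_i)$, the edge $a_ia'_ib_iw_i$ is an atypical $AABB$ edge, the triple $\{a_i,a'_i,b_i\}$ is $(\e_1,\e_2,\e_3)$-typical (so $\{a_i,a'_i\}$ is an $\e_2$-typical pair), and $w_i$ is $\e_5$-typical. This lets me adapt the proof of Lemma~\ref{le1}, using the seed edge in place of the edge $a_1a_2b_1b_2$ there: by Claim~\ref{claim4} applied to $\{a_i,a'_i\}$ together with $\delta_3(\h)\ge n-1$, I pick $x_i\in A\cap N(a_i,a'_i,b_i)$ so that $\{x_i,a_i,a'_i\}$ is a typical $AAA$ triple, and analogously a short extension on the $w_i$-side terminating at a typical $BBB$ triple. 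Then Claim~\ref{triconn} stitches each of these typical end triples to a fresh typical $AAA$ (resp.\ $BBB$) triple produced by Corollary~\ref{coro1}, yielding a bridge $M_i$ with $|V(M_i)|\le 25$ or so, avoiding $K$ and the other seed (and $M_2$ also avoiding $V(M_1)$, which is possible since $|K|+|V(M_1)|\ll n$).

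Having built $M_1,M_2$, the Note preceding the claim combines them into a switcher by joining their $BBB$-ends by a $5$-vertex connector (two in $A$, three in $B$) and then appending one extra vertex in $B$ to flip one end from typical $AAA$ to typical $BAA$. The resulting path is an $(\e_1,\e_2,\e_3)$-switcher of difference $r_1^*+r_2^*+2\pmod 8$, provided $r_1^*+r_2^*\not\equiv 6\pmod 8$. Since adding one vertex to a path changes its difference by either $+3$ or $-1$ (both odd), the parity of $r_i^*$ equals the parity of $|V(M_i)|$, so I need $|V(M_1)|+|V(M_2)|$ to be odd, which I arrange by making the vertex counts of $M_1$ and $M_2$ of opposite parities; Claim~\ref{triconn} (whose path has flexible length up to $12$ because its random absorbing set $T$ satisfies $|T\cap A|,|T\cap B|\ge 5$) leaves enough slack to do so. The final switcher then has $|V(M_1)|+|V(M_2)|+6\le 56<100$ vertices, is $K$-avoiding by construction, and contains no $\e_5$-anarchists since every vertex is either a seed vertex (hence $\e_5$-typical by definition) or was chosen to be typical.

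The main obstacle is the $B$-side extension in the bridge construction: Lemma~\ref{le1} starts from two pairs that are already $\e_2$-typical, whereas here only $\{a_i,a'_i\}$ is directly $\e_2$-typical and the pair $\{b_i,w_i\}$ is not controlled by the seed definition. I resolve this by using the $\e_5$-typicality of $w_i$ (which makes $l_{w_i}^{ABB}$ and $l_{w_i}^{BBB}$ large by~\eqref{onevert2}) together with Claims~\ref{claim2} and~\ref{claim3} (an $\e_5$-typical vertex lies in few $\e_2$-atypical pairs and few $\e_3$-atypical triples), and a standard counting argument shows that either the pair $\{b_i,w_i\}$ can be used directly or a further one- or two-vertex extension in $B$ lands at a typical $BBB$ triple; once this is achieved, the remainder of the construction is routine. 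A secondary point is the parity control above, which reduces to varying by one the number of intermediate vertices used inside a Claim~\ref{triconn} connection. The analogous statement for seeds $(b_i,b'_i,a_i,w_i)$ follows by symmetry after swapping the roles of $A$ and $B$.
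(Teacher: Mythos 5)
Your overall plan (build one bridge from each seed, then combine via the Note) matches the paper's starting point, but the key step is missing, and the step you supply in its place does not work.

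The heart of the paper's proof is that the difference (mod~$8$) of the bridge that one can build from a seed is \emph{not under our control}: it is determined by where the neighbors $u_1,v_1$ of $\{b_1,w_1\}$ happen to lie, and the paper does an explicit case analysis ($u_1v_1\in AA$, $BA$, $AB$, $BB$) yielding either a switcher outright or a bridge of difference $6$, $7$ or $5$, and then a further round of case analysis when the two bridges' differences have the same parity. You replace this with the claim that you can make $|V(M_1)|$ and $|V(M_2)|$ have opposite parities, attributing the necessary ``slack'' to Claim~\ref{triconn}. That claim is unfounded. First, Claim~\ref{triconn} guarantees \emph{an} $\leq 12$-vertex connecting path; it does not give you a choice of length. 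Second, even if it did, a path connecting two $AAA$ (resp.\ two $BBB$) triples using only $AAAB$ (resp.\ $ABBB$) edges must have $\equiv 3\pmod 4$ vertices, so the number of \emph{new} vertices added to the bridge by each connection is fixed mod~$4$ and in particular its parity cannot be flipped by choosing a different connecting path. The only source of parity variation is the type ($A$ or $B$) of the vertex $v_1$ attached past $w_i$ and the length of that extension, and this is dictated by the graph (by how $d_A(a_i,b_i,w_i)$ and $d_B(a_i,b_i,w_i)$ split), not something you can prescribe. So the claim ``which I arrange by making the vertex counts of $M_1$ and $M_2$ of opposite parities'' is exactly the gap the paper's case analysis is there to fill, and your proof offers no substitute for it.

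Two secondary issues. (a) Claim~\ref{triconn} requires both end triples of the connection to be $(\e_1,\e_2,\e_3)$-typical, but your $B$-side end triple $\{b_i,w_i,v_1\}$ (or $\{w_i,v_1,v_2\}$) contains $w_i$, which is only $\e_5$-typical and $\e_5\gg\e_1$; the paper avoids this by pushing two more vertices past $w_1$ and ensuring that the typicality conditions it invokes do not involve $w_1$ in the pair/triple slots where $\e_1,\e_2,\e_3$ are needed. A one- or two-vertex extension does not evict $w_i$ from the end triple, so your ``standard counting argument'' sketch does not, as stated, yield an end triple to which Claim~\ref{triconn} applies. (b) For the $B$-side you only need an $(\e_1,\e_2,\e_3)$-typical $BBV$ triple (either $BBB$ or $BBA$) since $BBV$ triples form $\h_0$-connected sextuples with $BBB$; insisting on $BBB$ is both unnecessary and may be impossible when $d_B(a_i,b_i,w_i)$ is small.
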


\begin{proof} For the simplicity of the proof, we do not involve $K$ in our proof. But all vertices we need to choose in the following paragraphs can be chosen from the vertex set not intersecting with $K$ as the size of $K$ is small.

Since $w_1$ is $\e_5$-typical and $\delta_3(\h)\geq n-1$, we can extend the
edge $a_1a_1'b_1w_1$ to a path $P=a_1 a'_1 b_1 w_1 u_1 v_1$, such that $\{u_1,v_1\}$ is
an $(\e_1,\e_2)$-typical pair, $\{w_1,u_1,v_1\}$ is an $\e_5^{1/2}$-typical triple and $\{b_1,u_1,v_1\}$
is $(\e_1,\e_2,\e_3)$-typical. If $u_1v_1\in AA$, then there exist three $\e_1$-typical vertices
$\overline{a_1},a_1'',b_1'$ such that all triples $\{\overline{a_1},a_1,a'_1\}$, $\{u_1,v_1,a_1''\}$ and
$\{v_1,a_1'',b'_1\}$ are $(\e_1,\e_2,\e_3)$-typical. Hence, $S=\overline{a_1} a_1 a'_1 b_1 w_1 u_1 v_1 a_1''
b_1'$ is a path with both ends in $AAA$ and $AAB$, and then $S$ is a switcher with $s^*\equiv7$.

Otherwise, $u_1 v_1\in BA$, or $u_1 v_1\in AB$, or $u_1 v_1\in BB$. Similarly, extending the seed
$a_1 a'_1 b_1 w_1$ to a path $P=a_1 a'_1 b_1 w_1 u_1 v_1$, we can find
an $(\e_1,\e_2,\e_3)$-bridge $R_1, R_2\ or\ R_3$  with $r_1^*\equiv 6$, $r_2^*\equiv 7$ or $r_3^*\equiv 5$,
respectively.
We repeat the same construction on the second seed to get $P'=a_2 a'_2 b_2 w_2 u_2 v_2$. If we cannot
get a switcher with odd difference, then there exists an $(\e_1,\e_2,\e_3)$-bridge obtained from the second seed,
$R'_1$, or $R'_2$, or $R'_3$ with $(r'_1)^*\equiv 6$, or $(r'_2)^*\equiv 7$, or $(r'_3)^*\equiv 5$, respectively.
By applying Claim~\ref{triconn}, we can  use these two $(\e_1,\e_2,\e_3)$-bridges
$R_i$ and $R'_j$, $i,j\in\{1,2,3\}$, to form a switcher with odd difference.

\medskip

If the differences of these two bridges have different parity, connecting these two bridges results in a
switcher with odd difference. Now assume the difference of these two bridges have the same parity.

Suppose there are two bridges $R_1, R_2$ with even difference $r_1^*=r_2^*\equiv 6$. Then $u_1v_1
\in BA$ and $u_2v_2\in BA$. If $l_{a_1a_1'w_1}^B\geq \frac{n}{2}$, then we have a path $b_1' a_1
a_1' w_1  b_1 u_1 v_1$, where $b_1'\in B$ and $\{b_1',a_1,a_1'\}$ is $(\e_1,\e_2,\e_3)$-typical;
this path can be extended to a bridge $R$ with difference $r^*\equiv 3$.
% which results in a switcher with odd difference.
If $l_{a_1a_1'w_1}^A\geq \frac{n}{2}$, consider the path
$b_1 a_1 a_1' w_1 a_1''$ where $a_1''\in A$ is $\e_1$-typical. Extending this path,  we can get either
a switcher with difference 5, or a bridge with difference 5.

Now assume there are two bridges $R_1$ and $R_2$ with odd differences. Then $u_iv_i\in AB \ or
\ BB$ for $i=1,2$. If $l_{a_1'b_1w_1}^A\geq \frac{n}{2}$, we may assume $u_1v_1\in AB$; otherwise we obtain a
switcher with odd difference. If $l_{a_1a_1'w_1}^B\geq \frac{n}{3}$, then $a_1 a_1' w_1 b_1 u_1 v_1$
can be extended to a bridge with difference 4. Connecting this bridge with $R_2$ gives a switcher
with odd difference. We may assume $ l_{a_1a_1'w_1}^A\geq \frac{2n}{3}$. Then $|L_{a_1a_1'w_1}^A \cap L_{a_1' b_1 w_1}^A|\geq
\frac{n}{6}$. In this case, there exists $a\in A$ such that $a\in N(a_1,a_1',w_1)\cap N(a_1',b_1,w_1)$
and the pair $\{a,w_1\}$ is $\e_5^{3/4}$-typical. If $l_{a_1'w_1a}^A\geq \frac{n}{2}$, we have a
switcher with difference $5$ by the path $b_1 a_1 a_1' w_1 a$. If
$l_{a_1'w_1 a}^B\geq \frac{n}{2}$, the path $a_1 b_1 a_1' w_1 a$ gives a bridge with even
difference 2, which also gives a switcher with odd difference by connecting it and $R_2$.

If $l_{a_1'b_1w_1}^B\geq \frac{n}{2}$, we may assume $u_1v_1\in BB$ for all possible choices of
$u_1v_1$; otherwise we can obtain a bridge with even difference. Consider the path
$P_1=a_1 b_1 a_1' w_1 u_1$.  If $l_{a_1'w_1u_1}^B \geq \frac{n}{6}$, we extend $P_1$ to a bridge
with difference 0. We may assume $l_{a_1'w_1u_1}^A \geq \frac{5n}{6}$ and hence
$l_{a_1'w_1}^{AB}\geq \frac{n^2}{3}$, as there are at least $\frac{2n}{5}$ possible choices of
$u_1\in B$. Let $F:= \{ ab\in L_{a_1'w_1}^{AB}: \{a_1', a, b\} \text { is  $(\e_1,\e_2,\e_3)$-typical}\}$.
Since $a_1'$ is $\e_1$-typical and $l_{a_1' w_1}^{AB}\geq \frac{n^2}{3}$, $|F|\geq \frac{n^2}{4}$.
We know that $(a_1',a,b,w_1)$ is a seed for any $ab\in F$. Then for all possible $a\in A$ with
$ab\in F$ (the number of such vertices is at least $\frac{n}{4}$), we may assume
$l_{aw_1}^{AB}\geq \frac{n^2}{3}$. Otherwise we have a switcher with odd difference by the above
analysis. Hence, $l_{w_1}^{AAB}\geq \frac{n^2}{3} \cdot \frac{n}{4}\cdot \frac{1}{2} = \frac{n^3}{24}$,
contradicting the fact that $w_1$ is $\e_5$-typical.
\end{proof}

We know that connecting a bridge with a switcher forms a new bridge with different difference. If there are two given disjoint bridges with small lengths, switchers can help construct a good set. By Claim~\ref{switcher}, a lot of pairwise disjoint seeds give many switchers with odd differences. In the proof of Lemma~\ref{le5}, we explore when $\h$ contains many seeds or not and this completes the proof of Theorem~\ref{F}.

\begin{proof} [Proof of Lemma~\ref{le5} ] First, we claim that for sufficiently small $\e>0$, $\h$
contains two disjoint $(\e_1,\e_2,\e_3)$-bridges $M_1$ and $M_2$ with $|V(M_i)|\leq 25$ for $i=1,2$.

We repeat the proof of Lemma~\ref{le1} to build the first $(\e_1,\e_2,\e_3)$-bridge $M_1$, and
find two $(\e_1,\e_2)$-typical pairs $\{a_1,a_2\}$ and $\{b_1,b_2\}$. If $a_1a_2b_1b_2\in E(\h)$,
we can extend this edge to a bridge. If $a_1a_2b_1b_2\notin E(\h)$, then there exists a vertex
$z\in N(a_1,a_2,b_1)\cap N(a_1,b_1,b_2)$ such that we can extend $a_2 a_1 z b_1 b_2$ to an
$(\e_1,\e_2,\e_3)$-typical bridge.

To build the second bridge, find two $(\e_1, \e_2)$-typical pairs $\{a_1', a_2'\}$ and
$\{b_1', b_2'\}$, such that if $a_1'a_2'b_1'b_2'\notin E(\h)$ then
$a_1, a_2, b_1,b_2$ are not contained in the common neighbors of $\{a_1',a_2', b_1'\}$ and
$\{a_1',b_1',b_2'\}$, in order to get disjoint bridges. We can do it
since these vertices $a_1,a_2,b_1$ and $b_2$ are $\e_1$-typical, by Claim \ref{claim3},
there exists an $(\e_1,\e_2)$-typical pair $\{a'_1,b'_1\}$, such that all triples
$\{a'_1,b'_1,a_1\}$, $\{a'_1,b'_1,a_2\}$, $\{a'_1,b'_1,b_1\}$ and $\{a'_1,b'_1,b_2\}$ are
$\e_3$-typical. Then we know $d_A(a_1',b_1',b_i)\leq \e_3 n$ and
$d_B( a_1',b_1', a_i)\leq \e_3 n$ for $i=1,2$.
By Claim \ref{claim2}, we find two vertices $a'_2$, $b'_2$, such that $\{a'_1, a'_2\}$ and
$\{b'_1,b'_2\}$ are $(\e_1,\e_2)$-typical pairs and for $i=1,2$,
$a'_2 a'_1 b'_1 b_i\notin E(\h)$, and $b'_2 b'_1 a'_1 a_i\notin E(\h)$. Similarly,
if $a'_2 a'_1 b'_1 b'_2\in E(\h)$, $b'_2 b'_1 a'_1 a'_2$ can be extended to an
$(\e_1,\e_2,\e_3)$-typical bridge $M_2$   and $M_1\cap M_2=\emptyset$, since
$\delta(\h)\geq n-1$. Otherwise, there exist two vertices
$z',z'_1$ different from $a_1,a_2,b_1,b_2$ satisfying $z',z'_1\in N(a'_2,a'_1,b'_1)\cap
N(b'_2,b'_1,a'_1)$. Without loss of generality, suppose $z'\neq z$, then $a'_2 a'_1 z' b'_1
b'_2$ can be extended to a $(\e_1,\e_2,\e_3)$-bridge $M_2$ such that $M_1\cap M_2=\emptyset$.
So in any case, there are two disjoint $(\e_1,\e_2,\e_3)$-bridges $M_1$ and $M_2$ in
$\h$ with $|V(M_i)|\leq 25$ for $i=1,2$.
\begin{itemize}
\item []  {\bf Case 1.} All vertices in $B$ are $\e_5$-typical (or all vertices in $A$ are $\e_5$-typical).
\end{itemize}
We may consider the case when all vertices in $B$ are $\e_5$-typical.
Let $V':=V(M_1\cup M_2)$. It suffices to show that $\h$ has fourteen pairwise disjoint seeds of type $(a,a',b,w)$ that are
also disjoint from $V'$. Then by Claim~\ref{switcher}, every two such seeds can form a switcher with odd difference.
Hence, we can obtain seven pairwise disjoint switchers and each has odd difference.

Since all $b\in B$ are $\e_5$-typical, we have $l_b^{AAB}\leq \e_5 n^3$. Consider the set of
triples $E=\bigcup_{b\in V'\cap B}L_b^{AAB}$.
Since $|V'|\leq 50$, we have
$|E|\leq 50\e_5 n^3$ and, thus, by Corollary \ref{coro1}, there exists an
$(\e_1,\e_2,\e_3)$-typical triple $\{a_1,a'_1,b_1\}$ such that $a_1a_1'b_1\notin E$ and
$a_1,a'_1,b_1\notin V'$. Let $w_1\in
N_B(a_1,a'_1,b_1)$; the existence of $w_1$ follows from \eqref{eq1}.
By the definition of $E$, $w_1\notin V'$. So we get a seed $(a_1,a'_1,b_1,w_1)$.
Assume that we have produced $i-1$ seeds, $(a_j,a'_j,b_j,w_j)$, for $j=1,\ldots,i-1$. Set
\[
E_{i-1}=E\cup (L_{b_1}^{AAB}\cup L_{w_1}^{AAB})\cdots \cup (L_{b_{i-1}}^{AAB}\cup L_{w_{i-1}}^{AAB})
\]
and note that $|E_{i-1}|\leq 100\e_5 n^3$ if $i\le 15$. Similarly as before, we can find an
$(\e_1,\e_2,\e_3)$-typical triple $\{a_i,a'_i,b'_i\}$ such that $a_ia'_ib_i\notin E_{i-1}$ and
$a_i,a'_i,b_i\notin V'\cup \{a_1,a'_1,b_1,w_1\}\cup \cdots \cup \{a_{i-1},a'_{i-1},b_{i-1},w_{i-1}\}$.
We can also find $w_i\in N_B(a_i,a'_i,b_i)$ such that $a_i,a'_i,b'_i,w_i$ is a seed.
So there are at least fourteen pairwise disjoint seeds and, applying Claim~\ref{switcher},
we can form seven pairwise disjoint $(\e_1,\e_2,\e_3)$-switchers with odd differences. For any $a\in \{0,1,2,\ldots,7\}$,
we can find some numbers from those seven odd differences such that the summation of them is $a$ (mod 8). In particular, for the case $a=0$, we do not use any switchers.
Let $V''$ denote the set of all vertices of these seven switchers.
%We know that connecting a bridge with a switcher forms a new bridge with different difference.
%Since we have seven disjoint switchers with odd differences,
Then $V'\cup V''$ and a small number of $\e_5$-typical vertices, which are used to
connect bridges and switchers, form  a good set in $\h$.
\begin{itemize}
\item[] {\bf Case 2.} There exists an $\e_5$-anarchist in $\h$.
\end{itemize}
By Claim~\ref{fact1}, all vertices in one side are $3\e_5$-typical. Then a similar proof argument as
in Case 1 completes this case.
\begin{itemize}
\item[] {\bf Case 3.} There exists an $\e_5$-medium vertices in $\h$ and $\h$ doesn't contain any
$\e_5$-anarchist.
\end{itemize}
We may assume there are at least two $\e_5$-medium vertices, otherwise we get back to Case 1.
If there are at least $28$ pairwise disjoint seeds in $\h$ that are also disjoint from $V'=V(M_1\cup M_2)$, then there exist at least $14$ pairwise disjoint seeds of the same type. Then we can find a good set $X$ by Claim~\ref{switcher} and the argument in Case 1. So we may assume that the number of pairwise disjoint seeds is less than $28$.  Let $V_s$ denote a maximal set of vertices containing
pairwise disjoint seeds in $\h-V'$ and let $V_m$ denote the set of $\e_5$-typical vertices in $V'$.
Then all vertices in $V:=V_s\cup V_m$ are $\e_5$-typical. Let $V_A=V\cap A$ and $V_B=V\cap B$.
Then $|V_A|\leq 2\cdot 25 + 2\cdot 28=106$ and $|V_B|\leq 2\cdot 25 + 2\cdot 28=106$ (by $|M_i|\le 25$ for $i=1,2$ and the fact that each seed forms an $AABB$ edge).

Let $E_A=\bigcup_{a\in V_A}L_a^{ABB}$, $E_B=\bigcup_{b\in V_B}L_b^{AAB}$. Then $|E_A|\leq 106\e_5 n^3$ and
$|E_B|\leq 106\e_5 n^3$ since all vertices in $V$ are $\e_5$-typical. Let $T$ be a set of quadruples $(a_i,a_j,b_k,b_l)$ such that both $\{a_i,a_j,b_k\}$ and $\{a_j,b_k,b_l\}$ are $(\e_1,\e_2,\e_3)$-typical triples, $a_ia_jb_k\notin E_B$, $a_jb_kb_l\notin
E_A$ and $a_i,a_j,b_k,b_l\notin V$, where $a_i\neq a_j\in A$ and $b_k\neq b_l\in B$.
By Corollary~\ref{coro1}, $\h$ contains at most $\e_4n^3$ $(\e_1,\e_2,\e_3)$-atypical triples.
So $|T|\geq n^2(n-1)^2-2\cdot \e_4 n^3\cdot (n-1) - 2 \cdot 2 \cdot106\e_5 n^3\cdot
(n-1)-2\cdot 106 \cdot n(n-1)^2>\frac{n^4}{2}$.

For any $(a_i,a_j,b_k,b_l)\in T$, $a_i,a_j,b_k,b_l\notin V'\cup V_s$ (by the definition of $T$)
and $a_ia_jb_kb_l\notin \h$ (by the maximality of $V_s$). Since $a_ia_jb_kb_l\notin \h$, it follows from the proof of Lemma~\ref{le1} that $|N(a_j,b_k,b_l)\cap N(a_i,a_j,b_k)|\geq 2$.
Now we claim that for each vertex $v\in N(a_j,b_k,b_l)\cap N(a_i,a_j,b_k)$, either $v\in V'\cup V_s$ or $v$ is $\e_5$-medium. Suppose $v\notin  V'\cup V_s$ and $v$ is not $\e_5$-medium. Then $v$ is $\e_5$-typical, and hence, if $v\in B$ then $(a_i,a_j,b_k,v)$ is a seed disjoint from $V'\cup V_s$, and if $v\in A$ then $(b_k, b_l,a_j, v)$ is a seed disjoint from $V'\cup V_s$. This contradicts the maximality of $V_s$.
% All vertices of $N_A(a_j,b_k,b_l)$ and$N_B(a_i,a_j,b_k)$ are $\e_5$-medium. Using the similar way of
%constructing a bridge,
%we have $|N(a_j,b_k,b_l)\cap N(a_i,a_j,b_k)|\geq 2$. 
Since the number of $\e_5$-medium vertices in $\h$ is at most $8\e_0/\e_5n$ and $|V'\cup V_s|\le 2\cdot 25+ 28\cdot 4= 162$, the number of all possible vertices in $N(a_j,b_k,b_l)\cap N(a_i,a_j,b_k)$ for all $(a_i,a_j,b_k,b_l)\in T$ is at most  $8\e_0/\e_5n+162 <\frac{\e^3n}{10}$. Therefore,  we can find a vertex $u$, such that at least $\frac{n^4}{2}/ \frac{\e^3n}{10}=\frac{5n^3}{\e^3}$ quadruples $(a_i,a_j,b_k,b_l)$ satisfy $u\in N(a_i,a_j,b_k)\cap
N(a_j,b_k,b_l)$. Since $|N(a_j,b_k,b_l)\cap N(a_i,a_j,b_k)|\geq 2$, we can find two
such vertices $u,v$ by applying Pigeonhole principle twice.

\medskip

Next, we claim that for each integer $i\in I=\{0,3,6,7\}$, $u$ is contained in a bridge $U_i$ with difference $i$, and 
$v$ is contained in a bridge $V_i$ with difference $i$. Moreover,
$(\cup_{i\in I} V(U_i)) \cap (\cup_{i\in I} V(V_i))=\emptyset.$

Without loss of generality, we may assume $u \in B$. (The proof for the case when $u\in A$ is analogous.) Construct an auxiliary bipartite graph $G$ with partition classes $Y, Z$, where $Y=\{ (a_i,a_j) : a_i,a_j\in A, a_i\neq a_j \}$ and $Z=\{ (b_k,b_l) : b_k,b_l\in B,
b_k\neq b_l \}$, and $(a_i,a_j)\sim (b_k,b_l)$ if and only if $(a_i,a_j,b_k,b_l)\in T$
and $u\in N(a_i,a_j,b_k)\cap N(a_j,b_k,b_l)$. Then $|E(G)|\geq \frac{5n^3}{\e^3}>8n^3$
and the average degree of $G$ is at least $8n$.

Note that every graph $H$ contains a subgraph $D$, of which the minimum degree is at least
half of the average degree of $H$. Hence there exists $G'\subseteq G$ such that
$\delta(G')\geq 4n$.
In $G'$, $d_{G'} ((a_1,a_2))\geq 4n$ for $(a_1,a_2)\in V(G')\cap Y$. There exist $(b_1,b_3),
(b_2, b_3)\in V(G')\cap Z $ such that $(a_1,a_2)(b_1,b_3), (a_1,a_2)(b_2,b_3)\in E(G')$ with
$b_1\neq b_2$; otherwise $d_{G'} ((a_1,a_2))\leq n$. Since $d_{G'} ((b_2,b_3))\geq 4n$, there
exists $(a_3,a_4)\in V(G')\cap Y$ such that $(a_3,a_4)\in N_{G'}( (b_2,b_3))$ and $a_3,a_4\notin
\{ a_1,a_2\}$. Hence, we have $$u\in N(a_1,a_2,b_1)\cap N(a_2,b_1,b_3)\cap N(a_1,a_2,b_2)\cap
N(a_2,b_2,b_3)\cap N(a_3,a_4,b_2)\cap N(a_4,b_2,b_3).$$

Now the path $a_1 b_1 a_2 u b_3 b_2 a_4$ can be extended to a bridge with difference 0 by Claim
\ref{triconn}.
Similarly, the path $b_1 a_1 a_2 u b_2 b_3 a_4$ gives a bridge with difference 3
and the path $a_3 a_4 b_2 u b_3 a_2 b_1$ gives a bridge with difference 6.

To obtain a bridge with difference 7, we consider another bipartite graph $H$ with partition classes
$U, W$, where $U=\{ (a_i,b_k) : a_i\in A, b_k\in B \}$ and $W=\{ (a_j,b_l) : a_j\in A, b_l\in B \}$,
and $(a_i,b_k)\sim (a_j,b_l)$ if and only if $(a_i,a_j,b_k,b_l)\in T$ and $u\in N(a_i,a_j,b_k)\cap
N(a_j,b_k,b_l)$. Then $|E(H)|\geq \frac{5n^3}{\e^3}>8n^3$ and the average degree of $H$ is at least
$8n$. Similarly, for some $(a_1,b_1)\in U$, there exists $(a_2,b_2), (a_3,b_3)\in W$ such that
$a_2\neq a_3, d_{H}( (a_2,b_2))\geq 4n$ and $(a_1,b_1)(a_2,b_2), (a_1,b_1)(a_3,b_3)\in E(H)$.
Since $d_H((a_2,b_2))\geq 4n$, there exists $(a_4,b_4)\in U$ such that $(a_4,b_4)\in N_{H} ((a_2,b_2))$
and $b_4\neq b_1$. Hence, we have $$u\in N(a_1,a_2,b_1)\cap N(a_2,b_1,b_2)\cap N(a_1,a_3,b_1)
\cap N(a_2,b_4,b_2).$$ The path $a_3 a_1 b_1 u a_2 b_2 b_4$ results in a bridge with difference 7.

To summarize, we found four bridges with difference 0, 3, 6, 7 respectively, and all contain $u$.
Let $V_1'$ be the set of vertices of these four bridges. Since each bridge is obtained by extending a path with $7$ vertices and both end triples $(\e_1,\e_2,\e_3)$-typical by the application of Claim~\ref{triconn}, it has at most $7+2\cdot(12-3)=25$ vertices. Then $|V_1'|\leq 4\cdot 25= 100$.
Repeat the same argument for $v$, we find four bridges with difference 0, 3, 6, 7 respectively,
and all are disjoint from $V_1'$ and contain
$v$. We complete the proof of this claim.

\medskip
Now we find a good set $X$. Let $V_2'$ be the set of vertices of such four bridges containing $v$.
We can choose one bridge $M_1$ containing $u$ and one bridge $M_2$ containing $v$ to make
$\frac{3n_1'-n_2'+6}{8}$ an integer where $n_1'=|A\setminus V(M_1\cup M_2)|$ and
$n_2'=|B\setminus V(M_1\cup M_2)|$, since
\begin{align*}
& 0+0\equiv 0  (\text{mod } 8);\ 3+6\equiv 1  (\text{mod } 8);\ 3+7\equiv 2  (\text{mod } 8);\ 0+3\equiv 3  (\text{mod }8);\\&
 6+6\equiv 4 (\text{mod }8); \ 6+7\equiv 5  (\text{mod }8); \ 0+6\equiv 6 (\text{mod }8); \ 0+7\equiv 7 (\text{mod }8).
\end{align*}
Therefore, $X=V_1'\cup V_2'$ is a good set in $\h$.
\end{proof}

\section{Concluding remarks}
For the case when $|V(\h)|=2n+1$,  choose a partition $A$, $B$ of $V(\h)$ with $|A|=n+1$ and $|B|=n$
and, subject to this, $|\h(A,A,B,B)|$ is minimal. The proof is almost exactly the same as that of Theorems
\ref{E} and \ref{F}, since one extra vertex almost does not make any difference here. Theorem \ref{main}
will become the following

\begin{theorem}\label{odd case}
There exists $\e_0>0$ such that, for sufficiently large $n$ and any 4-graph $\h$ on $2n+1$ vertices
with $b(\h)<\e_0 n^4$, the following hold.
\begin{enumerate}
\item[(i)]If $\delta_3(\h)\geq n-1$, then $\h$ has a Hamiltonian path;
\item[(ii)]If $\delta_3(\h)\geq n$, then $\h$ has a Hamiltonian cycle.
\end{enumerate}
\end{theorem}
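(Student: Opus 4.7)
The plan is to follow the proofs of Theorems \ref{E} and \ref{F} almost verbatim, with the partition chosen as $V(\h) = A \cup B$ where $|A| = n+1$, $|B| = n$, and subject to this, $|\h(A,A,B,B)| = b(\h) < \e_0 n^4$. The $+1$ imbalance fits comfortably inside the window of \eqref{eq2}, so the typicality framework of Section~2 (Definitions 2.1--2.3 and Claims \ref{edges}, \ref{claim1}--\ref{claim4}, Corollary \ref{coro1}) carries through verbatim, as Observation~(i) after Definition~2.1 already states. The only nontrivial adaptation in Section~2 is Claim \ref{fact1}, whose proof relied on the balanced partition. Here, the swap that exchanges $a \in A$ with $b \in B$ preserves the sizes $n+1, n$, so the extremality inequality $I_a \geq I_b - O(n^2)$ survives unchanged, and the numerical contradiction yielding typicality of one side still holds, perhaps with a slightly worse constant such as $4\e$ in place of $3\e$.

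For part~(i) I would repeat the four-step scheme behind Theorem~\ref{E}: build an $(\e_1,\e_2,\e_3)$-bridge $M$ via Lemma~\ref{le1}; absorb every $\e_5$-medium vertex into a path $Q$ containing $M$ via Lemma~\ref{le2}; transfer all $\e_5$-anarchists to the opposite side of the partition (by the adapted Claim \ref{fact1}, the recipient side is entirely $4\e_5$-typical), noting that this shifts $|A|$ by at most $5\e_0 n$ and leaves condition \eqref{eq2} intact; and finally apply the analogue of Lemma~\ref{le3} to extend $Q$ to a Hamiltonian path. The proof of Lemma~\ref{le3} only uses $|A|, |B| = n \pm O(\e_0 n + \e^3 n)$, so the initial offset of $1$ is absorbed into the existing error terms.

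For part~(ii) I would mirror Theorem~\ref{F}: construct a good set via Lemma~\ref{le5} (using two disjoint bridges plus switchers built from seeds via Claim~\ref{switcher}, with the case analysis based on the existence of anarchists or medium vertices unchanged), then pick two disjoint bridges $M_1, M_2$ inside the good set whose difference sum $m_1^* + m_2^*$ lies in the correct residue class mod $8$ to make the integer $m$ governing the odd-case analogue of Lemma~\ref{le4} an integer. Concretely, writing $n_1 = |A \setminus V(Q \cup M_2)|$ and $n_2 = |B \setminus V(Q \cup M_2)|$ with the current $|A|=n+1, |B|=n$ convention, the required target residue is shifted by a fixed amount compared to the even case; since the good-set definition provides a representative of every residue class mod $8$, this shift is harmless. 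The stronger hypothesis $\delta_3(\h) \geq n$ in the odd case substitutes for the lost slack from the imbalance and lets all the degree-sum inequalities \eqref{eq4}, \eqref{eq5}, \eqref{pair1}--\eqref{pair3}, and \eqref{tp1}--\eqref{tp4} go through with the same constants. After fixing $M_1, M_2$, the odd-case analogue of Lemma~\ref{le4} closes everything into a Hamiltonian cycle.

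The main obstacle, as the authors indicate, is bookkeeping rather than anything conceptual. One must check that each perturbation to $|A|$ and $|B|$ (the initial $+1$ imbalance, transferring $\e_5$-anarchists, removing the absorption path $Q$, and removing the second bridge $M_2$) preserves the inequalities $d_{\Gamma_A}(b) \geq 0.9 t$, $|B_{big}| > t$, and $\delta(\Gamma_B') \geq 0.8 p_3$ that drive the Dirac-type perfect matchings in Lemmas~\ref{le3} and \ref{le4}. Since every perturbation is $O(\e^3 n)$ while the inequalities have slack $\Omega(n)$, this is routine. The one spot requiring genuine care is recomputing the integer $m$ (and the associated target residue of $m_1^* + m_2^*$ mod $8$) in the odd-case analogue of Lemma~\ref{le4}, so that the vertex counts of $P_{top}$ and $P_{zig}$ partition $A \setminus V(Q \cup M_2)$ and $B \setminus V(Q \cup M_2)$ correctly; this is a direct parity recalculation that does not affect any of the estimates in the body of the argument.
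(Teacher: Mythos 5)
Your proposal is correct and takes the same route the paper sketches in Section~5: fix a partition with $|A|=n+1$, $|B|=n$ minimizing $|\h(A,A,B,B)|$, observe that the $+1$ imbalance sits well inside the window \eqref{eq2} so the typicality machinery of Section~2 persists (with Claim~\ref{fact1} adapted exactly as you describe, since the $a\leftrightarrow b$ swap preserves the imbalanced sizes and the extremality inequality), and then rerun the bridge--absorption--matching scheme of Theorems~\ref{E} and \ref{F}, using the good-set/mod-$8$ flexibility of Lemma~\ref{le5} to accommodate the shifted target residue. The paper offers no further detail here, and your filled-in bookkeeping (all perturbations are $O(\e^3 n)$ against $\Omega(n)$ slack in the Dirac-type matching bounds) matches what the authors intend.
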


Thus, if a 4-graph $\h$ with $n$ vertices is close to extremal graph $\h_0$ and its
minimum co-degree is at least $\lfloor \frac{n-1}{2}\rfloor$, then $\h$ must contain a Hamiltonian
cycle. It remains to consider the other case, that is, when $\h$ is far from $\h_0$.

\begin{conjecture}\label{conj2}
For all $c>0$ there exists $c_1>0$ such that, for sufficiently large $n$ and a 4-graph on $n$ vertices,
if $b(\h)\geq cn^4$ and $\delta_3(\h)\geq (1-c_1)\frac{n}{2}$ then $\h$ has a Hamiltonian cycle.
\end{conjecture}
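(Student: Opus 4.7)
The plan is to follow the absorbing method combined with the weak hypergraph regularity lemma, adapting to 4-uniform hypergraphs the non-extremal approach of R\"odl, Ruci\'nski and Szemer\'edi used for $k=3$ in \cite{Rodl2011}. The point is that Theorem \ref{main} already handles the case when $\h$ is close to $\h_0$, so Conjecture \ref{conj2} is exactly the complementary (``quasirandom'') regime: the hypothesis $b(\h)\geq cn^4$ forbids the extremal bipartite-like structure in every balanced partition, and this extra mixing is what lets the co-degree threshold drop from the exact value $\lfloor (n-1)/2\rfloor$ down to $(1-c_1)n/2$.

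First I would prove an absorbing lemma: for $c_1$ small enough depending on $c$, there is a short path $P_{abs}$ with $|V(P_{abs})|=o(n)$ and $(\e_1,\e_2,\e_3)$-typical end triples such that, for every $U\subseteq V(\h)\setminus V(P_{abs})$ with $|U|\leq c_2 n$, one can find a path $P'_{abs}$ on the vertex set $V(P_{abs})\cup U$ with the same end triples as $P_{abs}$. The standard recipe is to show that for each vertex $v\in V(\h)$ the number of ``absorbers'' for $v$ (short tight paths on $\O(1)$ vertices that, together with $v$, admit two different path realizations, one using $v$ and one not) is $\Omega(n^{s})$ for some fixed $s$; then a random sparsification gives $P_{abs}$. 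Here $b(\h)\geq cn^4$ is crucial because it ensures a positive density of $AABB$ edges no matter how $V(\h)$ is balanced, which in turn produces many absorber configurations of every ``type'' and prevents local bottlenecks.

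Second I would set up a reservoir $R\subseteq V(\h)\setminus V(P_{abs})$ of size $|R|=c_3 n$ together with a non-extremal version of the connecting lemma analogous to Claim \ref{triconn}, showing that any two $(\e_1,\e_2,\e_3)$-typical triples (of matching $\h_0$-connected type) can be linked by a tight path of bounded length using only vertices of $R$, and that $R$ can service $\O(n)$ such connections without depletion. Third, I would apply the weak regularity lemma for $4$-graphs to $V(\h)\setminus(V(P_{abs})\cup R)$, extract the reduced hypergraph, and use the quasirandomness coming from $b(\h)\geq cn^4$ together with the co-degree condition to find an almost-spanning collection of long vertex-disjoint tight paths inside the regular blocks. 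I would then chain these paths together through $R$, close the cycle through the two end triples of $P_{abs}$, and finally invoke the absorbing property of $P_{abs}$ to swallow the leftover $U$ of vertices not covered by the regular partition.

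The main obstacle I expect is the non-extremal connecting lemma at the threshold $\delta_3(\h)\geq(1-c_1)n/2$. In Claim \ref{triconn} the proof works because $\delta_3(\h)\geq n-1$ forces overwhelming neighborhood intersections; under the much weaker co-degree bound of the conjecture one cannot connect arbitrary typical triples by direct intersection arguments and must instead exploit $b(\h)\geq cn^4$ to show that many short paths between any two reasonable triples exist in a \emph{robust} sense (i.e., avoiding any prescribed set of $o(n)$ forbidden vertices). Making this quantitative in the $4$-uniform setting is the technical heart of the problem, since there are now five edge-types across any bipartition $A\cup B$ and one must argue separately about $AAAA$, $AAAB$, $AABB$, $ABBB$ and $BBBB$ densities while ensuring that the bipartition minimizing $|\h(A,A,B,B)|$ does not inadvertently steer the construction back into the extremal regime already handled by Theorem \ref{main}. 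A secondary but lighter difficulty is showing that the path cover produced by regularity can be made to meet the parity/divisibility constraints needed to close into a Hamilton cycle, which as in Section 4 may require a small number of ``switchers'' built from $AABB$ seeds guaranteed by the non-extremality hypothesis.
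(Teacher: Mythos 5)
This statement is an open conjecture in the paper, not a theorem: the authors explicitly do not prove it. Section 5 states that Conjecture~\ref{conj2} is ``equivalent to Conjecture~\ref{conj1} for $k=4$'' in the complementary (non-extremal) regime and remarks only that ``it is likely that this case requires the use of absorption technique that R\"odl, Ruci\'nski and Szemer\'edi used to prove the case of 3-graphs.'' There is therefore no proof in the paper against which to compare your attempt, and any purported proof would be a new result rather than a reproduction of the paper's argument.

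What you have written is a plan of attack, not a proof, and you are candid about this yourself: you say ``The main obstacle I expect is the non-extremal connecting lemma'' and call it ``the technical heart of the problem.'' That is precisely the missing content. Concretely, none of the four steps is carried out at a level that could be checked: (1) the absorbing lemma asserts $\Omega(n^s)$ absorbers per vertex but gives no construction of an absorber gadget for tight paths in a 4-graph at co-degree $(1-c_1)n/2$, nor an argument that $b(\h)\geq cn^4$ supplies them for every vertex (including vertices whose links could still be badly structured); (2) the connecting lemma at co-degree $(1-c_1)n/2$ cannot be obtained by the intersection argument of Claim~\ref{triconn} (which needs $\delta_3 \geq n-1$ so that two neighborhood sets of size $\geq n-1$ in a ground set of size $<2n$ must overlap), and you offer no replacement mechanism, only the observation that one would be needed; (3) the almost-spanning tight path cover from weak regularity is stated without addressing whether the reduced hypergraph inherits a usable co-degree or density condition under the weak (as opposed to strong) hypergraph regularity lemma for $k=4$; and (4) the parity/divisibility step is gestured at via ``switchers from $AABB$ seeds'' but not constructed. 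So the proposal identifies a plausible roadmap — consistent with the authors' own speculation — but it does not constitute a proof, and the conjecture remains open.
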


Conjecture \ref{conj2} is equivalent to Conjecture \ref{conj1} for $k=4$. It is likely that this case
requires the use of absorption technique that R\"{o}dl, Ruci\'{n}ski and
Szemer\'{e}di \cite{Rodl2011} used to prove the case of 3-graphs.

On the other hand, using the tools in this paper, one might ask if Theorem \ref{main} holds for
$k$-graphs with $k\ge 5$.
\begin{conjecture}
There exists $\e_0>0$ such that, for sufficiently large $n$ and any k-graph $\h$ on $n$ vertices with
$b(\h)<\e_0 n^k$ the following holds:
If $\delta_{k-1}(\h)\geq  \lfloor\frac{n-k+3}{2}\rfloor$, then $\h$ has a Hamiltonian cycle.
\end{conjecture}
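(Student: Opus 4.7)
The plan is to carry the four-step program of Theorems~\ref{E} and~\ref{F} over to general $k \geq 5$: (i) set up a typicality framework for vertices, pairs, and up to $(k-1)$-tuples of $\h$; (ii) build bridges joining $A$-sided typical $(k-1)$-tuples to $B$-sided ones; (iii) absorb the medium vertices into a short path and transfer the anarchists to the opposite side; (iv) complete the cycle by a zig-zag. Define $\h_0=\h_0(A,B)$ as the $k$-graph on $V=A\cup B$ with $|A|=\lceil n/2\rceil$ whose edges are the $k$-subsets of odd intersection with $A$. Call an edge of $\h$ typical if it is of type $A^{k-1}B$ or $AB^{k-1}$ and atypical otherwise, and interpret $b(\h)<\e_0 n^k$ as $\sum_{i \text{ even}} |\h(A^i B^{k-i})| < \e_0 n^k$.

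For step (i), for each $v\in A$ and each $0\le i\le k-1$ let $l_v^{A^iB^{k-1-i}}$ count the $(k-1)$-tuples of that type in the link of $v$. Call $v$ typical if $\sum_{i \text{ even}} l_v^{A^iB^{k-1-i}}$ is small, anarchist if $\sum_{i \text{ odd}} l_v^{A^iB^{k-1-i}}$ is small, and medium otherwise; classify pairs, triples, \ldots, $(k-1)$-tuples analogously. Prove the natural analogues of Claims~\ref{claim1}--\ref{claim4}: the co-degree condition together with $b(\h)<\e_0 n^k$ forces the atypical objects at each level to be few, and any typical lower-level object lies in few atypical higher-level supersets. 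The extension of Claim~\ref{triconn} uses the same random-subset argument, with a union bound over the now $O(k)$ levels of typicality, and yields a short typical path connecting any two $\h_0$-connected typical $(k-1)$-tuples while avoiding a given small forbidden set.

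Steps (ii)--(iv) then closely track the $k=4$ proof. Build one (for paths) or two (for cycles) disjoint bridges as in Lemma~\ref{le1} via a case analysis on whether the chosen $k$-tuple of type $A^{k-1}B$ lies in $\h$; absorb every medium vertex into a short typical path as in Lemma~\ref{le2}; transfer the anarchists via an extension of Claim~\ref{fact1}. The zig-zag uses periodic blocks $A^{k-1}B$ on top and $AB^{k-1}$ on the bottom, and equating $A$- and $B$-counts gives $m = \frac{(k-1)n_1 - n_2}{k(k-2)}$ (up to boundary terms) for the number of top blocks. Each block is realized by a perfect matching in an auxiliary bipartite graph whose degrees are almost maximal thanks to typicality. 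The analogue of Lemma~\ref{seq} requires a Hamiltonian cycle in a $(k-1)$-uniform auxiliary hypergraph of near-maximum minimum degree, which is easy in this near-extremal regime (a greedy construction already works).

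The main obstacle is the switcher/good-set construction (Lemma~\ref{le5}). For $k=4$ the integrality-governing modulus is $k(k-2)=8$, and Claim~\ref{switcher} produces switchers covering $\mathbb{Z}/8\mathbb{Z}$ via a manageable case analysis on a seed $aa'bw$ extended to a six-vertex path. For general $k$ the modulus is $k(k-2)$, which is quadratic in $k$, so one must build switchers with differences spanning this much larger residue class. Each residue can still be hit by extending a seed to a bounded-length path and branching on the $A/B$-pattern of the extension, but the number of subcases grows with $k$ and the bookkeeping becomes substantially more delicate. Apart from this combinatorial explosion, no genuinely new idea appears to be required, and the conjecture should follow from a more intricate but essentially local version of the case analysis carried out here for $k=4$.
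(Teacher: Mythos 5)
The statement you are trying to prove appears in the paper only as an open conjecture; the authors explicitly defer the case $k\ge 5$ to future work, so there is no proof in the paper to compare against. Your sketch outlines a plausible program, but it contains a structural gap, not just bookkeeping. You declare edges of type $A^{k-1}B$ or $AB^{k-1}$ to be \emph{typical} and build your entire zig-zag and bridge machinery on them, while defining $\h_0=\h_0(A,B)$ as the $k$-graph of all $k$-sets of odd intersection with $A$. But $|A^{k-1}B\cap A|=k-1$, which is even precisely when $k$ is odd, so for odd $k$ an $A^{k-1}B$ quadruple is \emph{not} an edge of $\h_0$. A Hamiltonian cycle assembled from $A^{k-1}B$ and $AB^{k-1}$ blocks therefore does not live inside $\h_0$ for odd $k$, and closeness of $\h$ to $\h_0$ gives you nothing about its existence. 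For odd $k$ the periodic pattern of the zig-zag, the list of admissible edge types, the modulus governing the integrality constraint, and consequently the whole switcher analysis must all be redesigned; the formula $k(k-2)$ and the residue classes you need to hit are artifacts of a construction that is only consistent for even $k$.

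Beyond this, two of the steps you wave through require real work. Your analogue of Lemma~\ref{seq} relies for $k=4$ on the theorem of Reiher, R\"{o}dl, Ruci\'{n}ski, Schacht and Szemer\'{e}di on Hamiltonian cycles in $3$-graphs of minimum vertex degree $(\tfrac{5}{9}+o(1))\binom{n}{2}$; asserting that the $(k-1)$-uniform analogue ``is easy'' or ``greedy'' needs an actual argument, because the auxiliary hypergraph is defined via typicality conditions and a naive greedy path need not close into a cycle or hit every vertex. And the switcher/good-set argument (Claim~\ref{switcher}, Lemma~\ref{le5}) is the most intricate part of the $k=4$ proof: it hinges on a branching over the $A/B$-pattern of a short extension of a seed together with several pigeonhole steps, and it is far from clear that a bounded-length case analysis can always hit every residue class modulo $k(k-2)$. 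Finally, the paper never defines $b(\h)$ for $k\ge 5$, so your reading $\sum_{i\text{ even}}|\h(A^iB^{k-i})|<\e_0 n^k$ should be validated, e.g.\ by proving an analogue of Claim~\ref{edges} showing it really forces closeness to $\h_0$, rather than assumed.
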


\section*{Acknowledgements}
We are thankful to the anonymous referees for their valuable comments and suggestions.

\end{document}